\newtheorem{thm}{Theorem}[section]
\newtheorem{theorem}[thm]{Theorem}
\newtheorem{lemma}[thm]{Lemma}
\newcommand{\ee}{\mathbb{E}}
\newcommand{\pp}{\mathbb{P}}
\newcommand{\cc}{\mathbb{C}}
\newcommand{\rr}{\mathbb{R}}
\begin{document}

\title [Stein's Method and Characters of Compact Lie Groups]
{Stein's Method and Characters of Compact Lie Groups}

\author{Jason Fulman}
\address{Department of Mathematics\\
University of Southern California\\
Los Angeles, CA 90089-2532} \email{fulman@usc.edu}

\keywords{Stein's method, central limit theorem, compact Lie group,
circular ensembles, symmetric space, random matrix}

\subjclass{}

\thanks{The author received funding from NSF grant DMS-0503901.}

\date{Submitted 6/13/08; minor revisions on 6/22/08}

\begin{abstract} Stein's method is used to study the trace of a random
element from a compact Lie group or symmetric space. Central limit
theorems are proved using very little information: character values on a
single element and the decomposition of the square of the trace into
irreducible components. This is illustrated for Lie groups of classical
type and Dyson's circular ensembles. The approach in this paper will be
useful for the study of higher dimensional characters, where normal
approximations need not hold.
\end{abstract}

\maketitle

\section{Introduction} \label{introduction}

There is a large literature on the traces of random elements of compact
Lie groups. One of the earliest results is due to Diaconis and Shahshahani
\cite{DS}. Using the method of moments, they show that if $g$ is random
from the Haar measure of the unitary group $U(n,\cc)$, and $Z=X+iY$ is a
standard complex normal with $X$ and $Y$ independent, mean $0$ and
variance $\frac{1}{2}$ normal variables, then for $j=1,2,\cdots$,
$Tr(g^j)$ are independent and distributed as $\sqrt{j}Z$ asymptotically as
$n \rightarrow \infty$. They give similar results for the orthogonal group
$O(n,\rr)$ and the group of unitary symplectic matrices $USp(2n,\cc)$.
Concerning the error in the normal approximation, Diaconis conjectured
that for fixed $j$, it decreases exponentially or even subexponentially in
$n$. Stein \cite{St2} uses ``Stein's method'' to show that $Tr(g^k)$ on
$O(n,\rr)$ is asymptotically normal with error $O(n^{-r})$ for any fixed
$r$. Johansson \cite{J} proved Diaconis' conjecture for classical compact
Lie groups using Toeplitz determinants and a very detailed analysis of
characteristic functions.

One direction in which the results of the previous paragraph have been
extended is the study of linear statistics of eigenvalues: see \cite{J},
\cite{DE}, \cite{So} and the numerous references therein. There is also
work by D'Aristotile, Diaconis, and Newman \cite{DDN} on central limit
theorems for linear functions such as $Tr(Ag)$ where $A$ is a fixed $n
\times n$ real matrix and $g$ is from the Haar measure of $O(n,\rr)$. In
recent work, Meckes \cite{Me2} refined Stein's technique from \cite{St2}
to establish a sharp total variation distance error term (order $n^{-1}$)
for the \cite{DDN} result.

A natural goal is to prove limit theorems (with error terms) for the
distribution of traces in other irreducible representations: i.e.
$\chi^{\tau}(g)$, where $g$ is a random element of a compact Lie group and
$\chi^{\tau}$ is the character of an irreducible representation $\tau$.
This would have direct implications for Katz's work \cite{Ka} on
exponential sums; see Section 4.7 of \cite{KLR} for details. We do not
attain this goal, but make a useful contribution to it.

More precisely, the current paper presents a formulation of Stein's method
designed for the study of $\chi^{\tau}(g)$. In the case of normal
approximation, we obtain $O(n^{-1})$ bounds for the error term using only
two pieces of information:

\begin{itemize}
\item The value of the ``character ratios'' $\frac{\chi^{\phi}(\alpha)}
{dim(\phi)}$ where $\phi$ may be arbitrary but $\alpha$ is a single
element of $G$ (typically chosen to be close to the identity)

\item The decomposition of $\tau^2$ into irreducible representations
\end{itemize}

In contrast, the method of moments approach requires knowing the
multiplicity of the trivial representation in $\tau^k$ for all $k \geq 1$
(which could be tricky to compute) and does not give an immediate bound on
the error. Johansson's paper \cite{J} gives sharper bounds when
$\chi^{\tau}$ is the trace of an element from a classical compact Lie
group, but required knowledge of high order moments and deep analytical
tools which might not extend to arbitrary representations $\tau$. Even the
Stein's method approaches of Stein \cite{St2} and Meckes \cite{Me2} use
information about the distribution of matrix entries; very little is known
about this for arbitrary $\tau$, whereas the main ingredient for our
approach (character theory) is well-developed.

Let us explain our statement in the abstract that the methods of this
paper will prove useful for approximation other than normal approximation.
We use Stein's method of exchangeable pairs which involves the
construction of a pair $(W,W')$ of exchangeable random variables. Our pair
(which is somewhat different from those of Stein \cite{St2} and Meckes
\cite{Me2}) satisfies the linearity condition that $\ee(W'|W)$ is
proportional to $W$, and we find representation theoretic formulas for
quantities such as $\ee(W'-W)^k$. These computations are completely
general and apply to arbitrary distributional approximation. Stein's
method of exchangeable pairs is still quite undeveloped for continuous
distributions other than the normal, but that is temporary and as evidence
we mention the paper \cite{CF} which develops error terms for exponential
approximation using quantities like $\ee(W'-W)^k$ with $k$ small.

We remark that the bounds in our paper are all given in the Kolmogorov
metric. Similar results can be proved in the slightly stronger total
variation metric (see the remarks after Theorem \ref{steinbound}). However
we prefer to work in the Kolmogorov metric as it underscores the
similarity with discrete settings such as \cite{Fu}, where total variation
convergence does not occur. We also mention that all bounds obtained in
this paper are given with explicit constants.

The organization of this paper is as follows. Section \ref{stein} gives
background on Stein's method and normal approximation. Section \ref{group}
develops general theory for the case that $G$ is a compact Lie group and
$\chi^{\tau}$ an irreducible character. It treats the trace of random
elements of $O(n,\rr)$, $USp(2n,\cc)$, and $U(n,\cc)$ as examples. Section
\ref{sspace} extends the methods of Section \ref{group} to study spherical
functions of compact symmetric spaces. The symmetric space setting is
natural from the viewpoint of random matrix theory \cite{Dn}, \cite{KaS}.
After illustrating the technique on the sphere, we treat Dyson's circular
ensembles as examples, obtaining an error term.

\section{Stein's Method for Normal Approximation} \label{stein}

    In this section we briefly review Stein's method for normal
    approximation, using the method of exchangeable pairs
    \cite{St1}. For more details, one can consult the survey \cite{RR}
    and the references therein.

    Two random variables $W,W'$ are called an exchangeable pair if
    $(W,W')$ has the same distribution as $(W',W)$. As is typical in probability
    theory, let $\ee(A|B)$ denote the expected value of $A$
    given $B$. The following result of Stein uses an exchangeable
    pair $(W,W')$ to prove a central limit theorem for $W$.

\begin{theorem} \label{steinbound} (\cite{St1}) Let $(W,W')$ be an
 exchangeable pair of real random variables such that $\ee(W^2)=1$ and
$\ee(W'|W) = (1-a)W$ with $0< a <1$. Then for all real $x_0$,
\begin{eqnarray*} & & \left| \pp(W \leq x_0) - \frac{1}{\sqrt{2 \pi}}
\int_{-\infty}^{x_0} e^{-\frac{x^2}{2}} dx \right|\\ & \leq &
\frac{\sqrt{Var(\ee[(W'-W)^2|W])}}{a} + (2 \pi)^{- \frac{1}{4}}
\sqrt{\frac{1}{a} \ee|W'-W|^3}. \end{eqnarray*}
\end{theorem}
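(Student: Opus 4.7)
The plan is to combine Stein's characterization of the standard normal with an exchangeability antisymmetrization, reducing the Kolmogorov distance to the two error terms in the claim. For each fixed $x_0 \in \rr$, one invokes the classical solution $f = f_{x_0}$ of the Stein equation
\[
f'(w) - w f(w) = 1_{(-\infty,\, x_0]}(w) - \Phi(x_0),
\]
with $\Phi$ the standard normal cdf; explicit integration yields the bounds $\|f\|_\infty \leq \sqrt{2\pi}/4$ and $\|f'\|_\infty \leq 1$. Substituting $W$ and taking expectations converts the problem into bounding $\ee[f'(W) - W f(W)]$ uniformly in $x_0$.

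The second step exploits exchangeability. The function $F(w,w') = (w'-w)\bigl(f(w) + f(w')\bigr)$ is antisymmetric, so $\ee F(W,W') = 0$. Decomposing $F(W,W') = 2(W'-W)f(W) + (W'-W)\bigl(f(W') - f(W)\bigr)$, conditioning the first summand on $W$, and using the hypothesis $\ee(W'-W \mid W) = -aW$ gives the key identity
\[
\ee[W f(W)] \;=\; \frac{1}{2a}\,\ee\bigl[(W'-W)\bigl(f(W') - f(W)\bigr)\bigr].
\]
A short calculation using $\ee W^2 = 1$, exchangeability, and the linearity hypothesis also furnishes the anchor $\ee[(W'-W)^2] = 2a$.

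Next, Taylor-expand $f(W') - f(W) = (W'-W)f'(W) + R$ with remainder $R = \int_W^{W'}\bigl(f'(t) - f'(W)\bigr)\,dt$. Substituting into the previous identity and isolating $\ee[f'(W) - Wf(W)]$ yields
\[
\ee[f'(W) - W f(W)] \;=\; -\frac{1}{2a}\,\ee\Bigl[f'(W)\bigl(\ee[(W'-W)^2 \mid W] - 2a\bigr)\Bigr] \;-\; \frac{1}{2a}\,\ee[(W'-W)R].
\]
The first summand is controlled by $\|f'\|_\infty \leq 1$ and Cauchy-Schwarz, giving a contribution of order $a^{-1}\sqrt{Var(\ee[(W'-W)^2 \mid W])}$, which matches the first term in the claim up to absolute constants.

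The main obstacle is the remainder term $\frac{1}{2a}\,\ee[(W'-W)R]$: because the right-hand side of Stein's equation is an indicator, $f_{x_0}'$ has a jump discontinuity at $x_0$, so the naive bound $|R| \leq \tfrac{1}{2}(W'-W)^2 \|f''\|_\infty$ is unavailable. The fix is to use the explicit formula for $f_{x_0}$: away from $x_0$ the derivative $f_{x_0}'$ is Lipschitz with controlled constant, producing a cubic contribution $\ee|W'-W|^3$, while the jump at $x_0$ contributes a term involving the indicator that $x_0$ lies between $W$ and $W'$. This indicator is handled by combining the elementary concentration estimate $\pp(|W - x_0| \leq t) \leq t/\sqrt{2\pi} + \text{(Kolmogorov error itself)}$ with a Cauchy-Schwarz step and optimizing over a truncation cutoff; this is precisely what produces the characteristic square-root form $(2\pi)^{-1/4}\sqrt{a^{-1}\,\ee|W'-W|^3}$ in the statement and allows the resulting Kolmogorov-distance contributions to be absorbed into the left-hand side. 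Combining the two estimates yields the theorem.
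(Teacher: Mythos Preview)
The paper does not supply its own proof of this theorem: it is quoted directly from Stein's monograph \cite{St1} and used as a black box, so there is no argument in the paper to compare against. That said, your outline is the standard exchangeable-pairs derivation of this bound as it appears in \cite{St1}: the Stein equation for the indicator test function, the antisymmetry identity $\ee[(W'-W)(f(W')+f(W))]=0$ leading to $\ee[Wf(W)]=\frac{1}{2a}\ee[(W'-W)(f(W')-f(W))]$, a first-order Taylor expansion splitting off the conditional-second-moment term, and the careful treatment of the jump of $f_{x_0}'$ that turns the remainder into the square-root expression in $\ee|W'-W|^3$.

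One cautionary remark: your write-up hedges at two places (``up to absolute constants'' for the first term, and a qualitative description of the truncation/absorption step for the second). To reproduce the \emph{exact} constants $1/a$ and $(2\pi)^{-1/4}$ in the statement you must track the bounds $\|f_{x_0}\|_\infty\le\sqrt{2\pi}/4$ and $\|f_{x_0}'\|_\infty\le 1$ through the computation rather than invoke them generically, and in the remainder step carry out the explicit optimization that produces the square root. As an outline of the right proof this is fine; as a finished proof it would need those constants pinned down.
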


{\it Remarks}:
\begin{enumerate}

\item There are variations of Theorem \ref{steinbound} (for instance
Theorem 6 of \cite{Me1}) which can be combined with our calculations to
prove normal approximation in the total variation metric. However Theorem
\ref{steinbound} is quite convenient for our purposes.

\item In recent work, R\"{o}ellin \cite{Rl} has given a version of Theorem
\ref{steinbound} in which the exchangeability condition can be replaced by
the slightly weaker condition that $W$ and $W'$ have the same law. Since
exchangeability holds in our examples and may be useful for other
applications involving Stein's method, we adhere to using Theorem
\ref{steinbound}.

\end{enumerate}

    To apply Theorem \ref{steinbound}, one needs bounds on $Var(\ee[(W'-W)^2|W])$
    and $\ee|W-W'|^3$. The following lemmas are helpful for this purpose.

\begin{lemma} \label{var} Let $(W,W')$ be an exchangeable pair of random
variables such that $\ee(W'|W) = (1-a) W$ and $\ee(W^2)=1$. Then
$\ee(W'-W)^2 = 2a$.
\end{lemma}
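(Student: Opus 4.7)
The plan is to expand the square and compute each of the three resulting expectations using exchangeability and the linearity hypothesis $\ee(W'|W)=(1-a)W$. Writing
\begin{equation*}
\ee(W'-W)^2 = \ee(W'^2) - 2\ee(W'W) + \ee(W^2),
\end{equation*}
the outer terms are immediate: $\ee(W^2)=1$ by assumption, and by exchangeability $W$ and $W'$ have the same distribution, hence $\ee(W'^2)=\ee(W^2)=1$.

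The only substantive step is the cross term $\ee(W'W)$. I would handle it by conditioning on $W$ and pulling it out: $\ee(W'W) = \ee[W \cdot \ee(W'|W)] = (1-a)\ee(W^2) = 1-a$. Substituting back gives $\ee(W'-W)^2 = 1 - 2(1-a) + 1 = 2a$, as required.

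There is no real obstacle here; the lemma is a one-line algebraic identity. The only thing to be careful about is to justify $\ee(W'^2)=\ee(W^2)$ from exchangeability of the pair rather than from any independence assumption, and to use the tower property cleanly in evaluating $\ee(W'W)$.
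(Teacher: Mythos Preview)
Your proof is correct and essentially identical to the paper's own argument: both expand the square, use exchangeability to identify $\ee(W'^2)=\ee(W^2)$, and use the tower property with the hypothesis $\ee(W'|W)=(1-a)W$ to evaluate the cross term. The paper writes the computation by first conditioning on $W$ and then expanding, but this is the same calculation in a slightly different order.
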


\begin{proof} Since $W$ and $W'$ have the same distribution, \begin{eqnarray*}
 \ee(W'-W)^2 & = & \ee(\ee(W'-W)^2|W)\\ & = & \ee((W')^2) + \ee(W^2) -
2 \ee(W \ee(W'|W))\\ & = & 2 \ee(W^2) - 2 \ee(W \ee(W'|W))\\ & = & 2
 \ee(W^2) - 2(1-a) \ee(W^2)\\ & = & 2a. \end{eqnarray*} \end{proof}

    Lemma \ref{majorize} is a well known inequality (already used
    in the monograph \cite{St1}) and useful because often the
    right hand side is easier to compute or bound than the left
    hand side. To make this paper as self-contained as possible,
    we include a proof. Here $x$ is an element of the state space
    $X$.

\begin{lemma} \label{majorize} \[ Var(\ee[(W'-W)^2|W]) \leq Var(\ee[(W'-W)^2|x]).\]
\end{lemma}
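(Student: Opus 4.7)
The plan is to recognize this as the standard fact that conditional variance is monotone in the conditioning $\sigma$-algebra: finer conditioning yields a larger variance of the conditional expectation, or equivalently, the conditional expectation operator is an $L^2$-contraction.

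First I would observe that $W$ is itself a function of the state $x$, so $\sigma(W) \subseteq \sigma(x)$. By the tower property applied to the nonnegative random variable $Y := (W'-W)^2$, one has
\[
\ee[Y|W] = \ee\bigl[\ee[Y|x] \,\big|\, W\bigr].
\]
Writing $Z := \ee[Y|x]$, I would next invoke the conditional Jensen inequality applied to the convex function $t \mapsto t^2$ to conclude $\ee[(\ee[Z|W])^2] \leq \ee[\ee[Z^2|W]] = \ee[Z^2]$. Since $\ee[\ee[Z|W]] = \ee[Z]$, subtracting the common square yields $Var(\ee[Z|W]) \leq Var(Z)$, which is exactly the inequality to be proved.

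There is no substantive obstacle here; this is a textbook result being included only for self-containedness. The one point worth a brief verification is that $\ee[(W'-W)^2|x]$ is well-defined as a function of $x$, i.e.\ that $W'$ is jointly distributed with $x$ in a way that permits conditioning on $x$. This holds automatically in the exchangeable-pair constructions used throughout the paper, where $W' = W(x')$ for some coupled state $x'$ obtained from $x$ by a prescribed random step.
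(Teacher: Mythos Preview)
Your proof is correct and follows essentially the same approach as the paper: apply the conditional Jensen inequality with $g(t)=t^2$ to $Z=\ee[(W'-W)^2\mid x]$, conditioned on $\sigma(W)$, and use the tower property together with equality of means to pass from the second-moment inequality to the variance inequality. The paper's version is slightly terser (it omits the explicit tower-property step and the subtraction of the common mean), but the argument is the same.
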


\begin{proof} Jensen's
 inequality states that if $g$ is a convex function, and $Z$ a random
 variable, then $g(\ee(Z)) \leq \ee(g(Z))$. There is also a
 conditional version of Jensen's inequality (Section 4.1 of \cite{Du})
 which states that for any $\sigma$ subalgebra ${\it F}$ of the
 $\sigma$-algebra of all subsets of $X$, \[ \ee(g(\ee(Z|{\it F})))
 \leq \ee(g(Z)).\] The lemma follows by setting $g(t)=t^2$,
 $Z=\ee((W'-W)^2|x)$, and letting ${\it F}$ be the $\sigma$-algebra
 generated by the level sets of $W$. \end{proof}

\section{Compact Lie groups} \label{group}

    This section uses Stein's method to study the distribution of
    a fixed irreducible character $\chi^{\tau}$ of a compact Lie group $G$. Subsection
    \ref{gengroup1} develops general theory for the case that $\chi^{\tau}$ is real
    valued. This is applied to
    study the trace of a random element of $USp(2n,\cc)$ in Subsection
    \ref{Sp} and the trace of a random orthogonal matrix in Subsections \ref{SO}
    and \ref{O}. Subsection \ref{cgengroup1} indicates the relevant amendments for the
    complex setting and Subsection \ref{U} illustrates the theory for $U(n,\cc)$.

\subsection{General theory (real case)} \label{gengroup1}

    Let $G$ be a compact Lie group and $\chi^{\tau}$ a non-trivial
    real-valued irreducible character of $G$. The random variable of interest
    to us is $W= \chi^{\tau}(g)$, where $g$ is chosen from the Haar measure of
    $G$. It follows from the orthogonality relations for
    irreducible characters of $G$ that $\ee(W)=0$ and
    $\ee(W^2)=1$.

    The following functional equation will be useful.

\begin{lemma} \label{dens1} (\cite{He2}, p. 392) Let $G$ be a compact Lie group
and $\chi^{\phi}$ an irreducible character of $G$. Then \[ \int_G
\chi^{\phi}(h \alpha h^{-1}g) dh = \frac{\chi^{\phi}(\alpha)}{dim(\phi)}
\chi^{\phi}(g) \] for all $\alpha,g \in G$.
\end{lemma}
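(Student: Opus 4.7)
The plan is to reduce the integral to a computation with the irreducible representation $\phi$ itself and then invoke Schur's lemma. Let $\phi: G \to GL(V)$ be an irreducible representation affording the character $\chi^{\phi}$, and define the operator on $V$
\[ T(\alpha) = \int_G \phi(h\alpha h^{-1})\, dh. \]
The first step is to show that $T(\alpha)$ intertwines $\phi$ with itself. For any $k \in G$, using the substitution $h \mapsto kh$ and the left-invariance of Haar measure,
\[ \phi(k)\, T(\alpha)\, \phi(k)^{-1} = \int_G \phi((kh)\alpha (kh)^{-1})\, dh = \int_G \phi(h\alpha h^{-1})\, dh = T(\alpha), \]
so $T(\alpha)\phi(k) = \phi(k) T(\alpha)$ for all $k$.

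Next, apply Schur's lemma: since $\phi$ is irreducible and $T(\alpha)$ commutes with every $\phi(k)$, there is a scalar $c(\alpha)$ with $T(\alpha) = c(\alpha)\, I_V$. To identify this scalar, take the trace on both sides. Using linearity of trace, linearity of the integral, and the fact that $\chi^{\phi}$ is a class function,
\[ c(\alpha)\dim(\phi) = \mathrm{tr}\, T(\alpha) = \int_G \chi^{\phi}(h\alpha h^{-1})\, dh = \chi^{\phi}(\alpha), \]
so $c(\alpha) = \chi^{\phi}(\alpha)/\dim(\phi)$.

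Finally, the quantity of interest follows by inserting $\phi(g)$. Since $\chi^{\phi}(h\alpha h^{-1}g) = \mathrm{tr}(\phi(h\alpha h^{-1})\phi(g))$, pulling trace and integral across and substituting $T(\alpha) = c(\alpha) I_V$ gives
\[ \int_G \chi^{\phi}(h\alpha h^{-1}g)\, dh = \mathrm{tr}\bigl(T(\alpha)\phi(g)\bigr) = c(\alpha)\, \chi^{\phi}(g) = \frac{\chi^{\phi}(\alpha)}{\dim(\phi)}\chi^{\phi}(g), \]
as desired. There is no serious obstacle here; the only points that require a little care are justifying that $T(\alpha)$ lies in the commutant (a routine change-of-variable in Haar measure) and swapping trace with the Haar integral, which is legitimate because $V$ is finite-dimensional and the integrand is continuous on the compact group $G$.
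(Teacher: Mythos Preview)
Your proof is correct and is the standard Schur's-lemma argument for this identity. The paper does not supply its own proof of this lemma at all; it simply cites Helgason \cite{He2}, p.~392, so there is nothing further to compare.
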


    We now define a pair $(W,W')$ by letting $W=\chi^{\tau}(g)$ where $g$ is
    chosen from Haar measure and $W'=W(\alpha g)$ where $\alpha$
    is chosen uniformly at random from a fixed self-inverse conjugacy class of
    $G$. Exchangeability of $(W,W')$ follows since the conjugacy class of
    $\alpha$ is self-inverse. Moreover, since
    $\chi^{\phi}(\alpha^{-1})=\overline{\chi^{\phi}(\alpha)}$, one has
    that $\chi^{\phi}(\alpha)$ is real for all irreducible representations $\phi$,
    a fact which will be used freely throughout this subsection.

    The remaining results in this subsection show that the
    exchangeable pair $(W,W')$ has desirable properties.

\begin{lemma} \label{mom1} \[ \ee(W'|W) = \left( \frac{\chi^{\tau}(\alpha)}
{dim(\tau)} \right) W.\] \end{lemma}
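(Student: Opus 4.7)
The plan is to compute $\ee(W'|g)$ first (conditioning on $g$ is stronger than conditioning on $W=\chi^{\tau}(g)$) and then invoke the tower property. Fix a representative $\alpha_0$ of the self-inverse conjugacy class; by construction $\alpha$ is obtained by choosing $h$ from Haar measure on $G$ and setting $\alpha = h\alpha_0 h^{-1}$. Since $\chi^{\tau}$ is a class function, the value $\chi^{\tau}(\alpha)$ appearing on the right-hand side is a constant (equal to $\chi^{\tau}(\alpha_0)$), so the assertion makes sense.

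With $g$ fixed, the conditional distribution of $W'$ is just the distribution of $\chi^{\tau}(h\alpha_0 h^{-1} g)$ where $h$ is Haar-distributed. Therefore
\begin{equation*}
\ee(W'\mid g) \;=\; \int_G \chi^{\tau}(h\alpha_0 h^{-1} g)\,dh.
\end{equation*}
This is precisely the integral appearing in Lemma \ref{dens1} (with $\phi=\tau$), so it equals
\begin{equation*}
\frac{\chi^{\tau}(\alpha_0)}{\dim(\tau)}\,\chi^{\tau}(g) \;=\; \frac{\chi^{\tau}(\alpha)}{\dim(\tau)}\,W.
\end{equation*}

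Finally, since $W$ is a function of $g$, the tower property of conditional expectation gives
\begin{equation*}
\ee(W'\mid W) \;=\; \ee\bigl(\ee(W'\mid g)\,\bigm|\,W\bigr) \;=\; \ee\!\left(\frac{\chi^{\tau}(\alpha)}{\dim(\tau)}\,W \,\bigm|\, W\right) \;=\; \frac{\chi^{\tau}(\alpha)}{\dim(\tau)}\,W,
\end{equation*}
which is the claimed identity. The only non-routine input is Lemma \ref{dens1}, which is quoted from Helgason; everything else is a direct unpacking of how $(W,W')$ was defined, so there is no serious obstacle. The main thing to be careful about is the distinction between the random element $\alpha$ and a fixed representative $\alpha_0$ of its conjugacy class, which is harmless because $\chi^{\tau}$ is constant on conjugacy classes.
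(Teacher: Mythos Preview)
Your proof is correct and follows essentially the same route as the paper: compute $\ee(W'|g)$ via Lemma \ref{dens1} with $\phi=\tau$, then observe that the result depends on $g$ only through $W$. Your version is slightly more explicit about the tower property and the distinction between a fixed representative $\alpha_0$ and the random conjugate $\alpha$, but the argument is the same.
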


\begin{proof} Applying Lemma \ref{dens1} with $\phi=\tau$, one has that
\[ \ee(W'|g) = \int_{h \in G} \chi^{\tau}(h \alpha h^{-1} g) dh =
\left( \frac{\chi^{\tau}(\alpha)}{dim(\tau)} \right) \chi^{\tau}(g).\] The
result follows since this depends on $g$ only through $W$.
\end{proof}

\begin{lemma} \label{mom2} \[ \ee(W'-W)^2 = 2 \left(1-  \frac{\chi^{\tau}(\alpha)}
{dim(\tau)} \right).\] \end{lemma}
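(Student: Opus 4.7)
The plan is to recognize this as an immediate corollary of Lemma \ref{var} combined with Lemma \ref{mom1}. All the ingredients needed to invoke Lemma \ref{var} have already been established in the paragraphs leading up to this statement.

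First, I would note that the pair $(W,W')$ is exchangeable (established when the pair was defined, using the fact that the conjugacy class of $\alpha$ is self-inverse), and that $\ee(W^2) = 1$ (noted at the very start of this subsection as a consequence of the orthogonality relations for irreducible characters). Next, Lemma \ref{mom1} gives exactly the linearity-of-conditional-expectation condition required by Lemma \ref{var}, with $(1-a) = \chi^{\tau}(\alpha)/\dim(\tau)$, i.e. $a = 1 - \chi^{\tau}(\alpha)/\dim(\tau)$. Applying Lemma \ref{var} yields $\ee(W'-W)^2 = 2a$, which is the claim.

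The one small bookkeeping point worth flagging is that Lemma \ref{var} is stated under the hypothesis $0 < a < 1$; however, its proof only uses the identity $\ee(W'|W) = (1-a)W$ together with $\ee(W^2)=1$ (and exchangeability, to equate $\ee((W')^2)$ with $\ee(W^2)$). So the conclusion $\ee(W'-W)^2 = 2a$ is valid here regardless of where the character ratio $\chi^{\tau}(\alpha)/\dim(\tau)$ lies. If one prefers not to lean on this mild generalization, an equally short direct computation reproduces the identity: expand $\ee(W'-W)^2 = \ee((W')^2) + \ee(W^2) - 2\ee(WW')$, use exchangeability to replace $\ee((W')^2)$ by $\ee(W^2)=1$, and use $\ee(WW') = \ee(W\,\ee(W'|W)) = (\chi^{\tau}(\alpha)/\dim(\tau))\,\ee(W^2)$ via Lemma \ref{mom1}.

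There is no real obstacle. The substantive content — the linearity condition $\ee(W'|W)=(1-a)W$ with the explicit identification of $a$ — was already done in Lemma \ref{mom1} using Lemma \ref{dens1}; the present statement is essentially a restatement in a form convenient for the later applications of Theorem \ref{steinbound}.
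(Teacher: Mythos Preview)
Your proposal is correct and matches the paper's proof exactly: the paper simply states that the result is immediate from Lemmas \ref{var} and \ref{mom1}. Your additional remarks about the hypothesis $0<a<1$ in Lemma \ref{var} and the alternative direct expansion are accurate but go beyond what the paper records.
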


\begin{proof} This is immediate from Lemmas \ref{var} and \ref{mom1}. \end{proof}

    For the remainder of this subsection, if $\phi$ is an
    irreducible representation of $G$, we let $m_{\phi}(\tau^r)$
    denote the multiplicity of $\phi$ in the r-fold tensor product
    of $\tau$ (which has character $(\chi^{\tau})^r$).

\begin{lemma} \label{cond2} \[ \ee[(W')^2|g] = \sum_{\phi} m_{\phi}(\tau^2)
\frac{\chi^{\phi}(\alpha)}{dim(\phi)} \chi^{\phi}(g), \] where the sum is
over all irreducible representations of $G$.
\end{lemma}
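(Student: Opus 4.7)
The plan is to expand $(W')^2$ using the decomposition of $\tau \otimes \tau$ into irreducibles and then apply Lemma \ref{dens1} termwise.

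First I would rewrite $W'$ more explicitly. Since $\alpha$ is uniform on a fixed conjugacy class $C$, we may write $\alpha = h \alpha_0 h^{-1}$ where $\alpha_0$ is any chosen representative of $C$ and $h$ is Haar-distributed on $G$ (independent of $g$). Then $W' = \chi^{\tau}(h \alpha_0 h^{-1} g)$, and consequently $(W')^2 = \chi^{\tau}(h \alpha_0 h^{-1} g)^2$.

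Next I would invoke the fact that the square of an irreducible character equals the character of the tensor square, which decomposes into irreducibles with the multiplicities $m_{\phi}(\tau^2)$ defined just before the lemma. That gives
\[
(W')^2 \;=\; \chi^{\tau^2}(h \alpha_0 h^{-1} g) \;=\; \sum_{\phi} m_{\phi}(\tau^2)\, \chi^{\phi}(h \alpha_0 h^{-1} g),
\]
the sum ranging over all irreducible representations of $G$.

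Now I would condition on $g$ and integrate over $h$ with respect to Haar measure. Because $g$ is independent of $h$ (and hence of $\alpha$), this amounts to computing $\int_G \chi^{\phi}(h \alpha_0 h^{-1} g)\, dh$ for each $\phi$, which is exactly the content of Lemma \ref{dens1} and equals $\frac{\chi^{\phi}(\alpha_0)}{\dim(\phi)} \chi^{\phi}(g) = \frac{\chi^{\phi}(\alpha)}{\dim(\phi)} \chi^{\phi}(g)$ (the ratio being constant on the conjugacy class). Summing termwise yields the claimed formula. I do not anticipate a genuine obstacle here — the only subtlety is justifying the exchange of sum and integral, which is immediate since the decomposition of $\tau^2$ is a finite sum (the tensor square of a finite-dimensional representation is finite-dimensional).
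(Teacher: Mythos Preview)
Your proposal is correct and follows essentially the same approach as the paper: decompose $(W')^2 = \chi^{\tau}(g')^2 = \sum_{\phi} m_{\phi}(\tau^2)\,\chi^{\phi}(g')$ with $g' = h\alpha h^{-1}g$, then apply Lemma~\ref{dens1} termwise to evaluate $\ee[\chi^{\phi}(g')\mid g]$. Your write-up is in fact slightly more detailed (making explicit the uniform choice of $h$ and noting that the sum is finite), but the argument is identical in substance.
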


\begin{proof} Write $(W')^2= \sum_{\phi} m_{\phi}(\tau^2)
\chi^{\phi}(g')$. Lemma \ref{dens1} gives that
\[ \ee[ \chi^{\phi}(g')|g] = \int_G \chi^{\phi}(h \alpha h^{-1}g) dh =
\frac{\chi^{\phi}(\alpha)}{dim(\phi)} \chi^{\phi}(g),\] and the result
follows. \end{proof}

    Lemma \ref{condvar} writes $Var([\ee(W'-W)^2|g])$ as a sum of
    positive quantities.

\begin{lemma} \label{condvar} \[ Var([\ee(W'-W)^2|g]) = \sum_{\phi}^{\ \ *}
 m_{\phi}(\tau^2)^2 \left( 1 + \frac{\chi^{\phi}(\alpha)}{dim(\phi)}
  - \frac{2 \chi^{\tau}(\alpha)}{dim(\tau)} \right)^2, \]
 where the star signifies that the sum is over all nontrivial
 irreducible representations of $G$.
\end{lemma}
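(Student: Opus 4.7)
The plan is to combine the previous lemmas to write $\ee[(W'-W)^2 \mid g]$ explicitly as a linear combination of irreducible characters evaluated at $g$, identify the mean as the coefficient of the trivial representation, and then read off the variance using Schur orthogonality.

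First, expand
\[ \ee[(W'-W)^2 \mid g] = \ee[(W')^2 \mid g] - 2W \cdot \ee[W' \mid g] + W^2. \]
Lemma \ref{cond2} handles the first term and Lemma \ref{mom1} the second. For the third, since $\tau$ is irreducible with real character, write $W^2 = (\chi^{\tau}(g))^2 = \sum_{\phi} m_{\phi}(\tau^2) \chi^{\phi}(g)$ by definition of $m_{\phi}(\tau^2)$. Since all three sums are indexed by the same set of irreducibles, combining yields
\[ \ee[(W'-W)^2 \mid g] = \sum_{\phi} m_{\phi}(\tau^2) \, c_{\phi}(\alpha) \, \chi^{\phi}(g), \qquad c_{\phi}(\alpha) := 1 + \frac{\chi^{\phi}(\alpha)}{\dim(\phi)} - \frac{2 \chi^{\tau}(\alpha)}{\dim(\tau)}. \]

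Second, extract the mean using orthogonality $\ee[\chi^{\phi}(g)] = \delta_{\phi,\mathbf{1}}$, so $\ee(W'-W)^2$ equals the $\phi = \mathbf{1}$ contribution; this is automatically consistent with Lemma \ref{mom2} because $m_{\mathbf{1}}(\tau^2)=1$ (as $\tau$ is self-dual) and $c_{\mathbf{1}}(\alpha) = 2(1 - \chi^{\tau}(\alpha)/\dim(\tau))$. Hence subtracting the mean removes precisely the $\phi = \mathbf{1}$ term.

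Third, compute the variance. Since $(W'-W)^2 \geq 0$ is real, its conditional expectation is real, so squaring it equals taking its modulus squared:
\[ Var\bigl(\ee[(W'-W)^2 \mid g]\bigr) = \int_G \Bigl| \sum_{\phi}^{\ \ *} m_{\phi}(\tau^2) \, c_{\phi}(\alpha) \, \chi^{\phi}(g) \Bigr|^2 \, dg. \]
Expanding the square and applying the Schur orthogonality relation $\int_G \chi^{\phi}(g) \overline{\chi^{\psi}(g)} \, dg = \delta_{\phi,\psi}$ collapses the double sum to $\sum_{\phi}^{*} m_{\phi}(\tau^2)^2 \, |c_{\phi}(\alpha)|^2$. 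Finally, because $\alpha$ lies in a self-inverse conjugacy class, $\chi^{\phi}(\alpha)$ is real for every irreducible $\phi$, so $c_{\phi}(\alpha)$ is real and $|c_{\phi}(\alpha)|^2 = c_{\phi}(\alpha)^2$, giving the stated formula.

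The computation is mostly bookkeeping; the only point requiring care is that individual $\chi^{\phi}(g)$ may be complex-valued (the sum is real only after combining a rep with its dual), so one must use the conjugated form of orthogonality rather than $\int \chi^{\phi}\chi^{\psi}$. The reality of $(W'-W)^2$ legitimizes writing the variance as a squared modulus, which is the cleanest route through this mild obstacle.
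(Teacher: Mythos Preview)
Your proof is correct and follows essentially the same route as the paper: express $\ee[(W'-W)^2\mid g]$ as $\sum_{\phi} m_{\phi}(\tau^2)\,c_{\phi}(\alpha)\,\chi^{\phi}(g)$ via Lemmas \ref{mom1} and \ref{cond2}, then use orthogonality of characters to read off the variance, with the trivial-representation term cancelling against $(\ee(W'-W)^2)^2$. The only cosmetic difference is that the paper computes $\ee\bigl[\ee((W'-W)^2\mid g)^2\bigr]$ first and then subtracts $(\ee(W'-W)^2)^2$, whereas you subtract the mean before squaring; your added remark about using the conjugated orthogonality relation (since individual $\chi^{\phi}$ may be complex even though the full sum is real) is a welcome point of care that the paper leaves implicit.
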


\begin{proof} By Lemmas \ref{mom1} and \ref{cond2}, \begin{eqnarray*}
 \ee((W'-W)^2|g) & = & \ee[(W')^2|g] -2W \ee(W'|g) +W^2\\
& = & \ee[(W')^2|g] + \left(1-\frac{2 \chi^{\tau}(\alpha)}{dim(\tau)}
\right) W^2\\
 & = & \sum_{\phi} m_{\phi}(\tau^2) \left( 1 +
\frac{\chi^{\phi}(\alpha)}{dim(\phi)} - \frac{2
\chi^{\tau}(\alpha)}{dim(\tau)} \right)
\chi^{\phi}(g). \end{eqnarray*} The orthogonality relation for
irreducible characters of $G$ gives that \[ \ee[ \ee((W'-W)^2|g)^2 ] =
\sum_{\phi} m_{\phi}(\tau^2)^2 \left( 1 +
\frac{\chi^{\phi}(\alpha)}{dim(\phi)} - \frac{2
\chi^{\tau}(\alpha)}{dim(\tau)} \right)^2 .\] Finally, note that
\[  Var([\ee(W'-W)^2|g]) =  \ee[ \ee((W'-W)^2|g)^2 ] - (\ee(W'-W)^2)^2, \]
and since the multiplicity of the trivial representation in $\tau^2$ is
$1$, the result follows from Lemma \ref{mom2}.
\end{proof}

\begin{lemma} \label{highmom} Let $k$ be a positive integer.
\begin{enumerate}
\item $\ee(W'-W)^k = \sum_{r=0}^k (-1)^{k-r} {k \choose r} \sum_{\phi}
m_{\phi}(\tau^r) m_{\phi}(\tau^{k-r})
\frac{\chi^{\phi}(\alpha)}{dim(\phi)}$.
\item $\ee(W'-W)^4 = \sum_{\phi} m_{\phi}(\tau^2)^2
\left[ 8 \left(1 - \frac{\chi^{\tau}(\alpha)}{dim(\alpha)} \right) - 6
\left(1-\frac{\chi^{\phi}(\alpha)}{dim(\phi)} \right) \right]$.
\end{enumerate}
\end{lemma}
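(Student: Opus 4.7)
My plan for part (1) is to binomially expand $(W'-W)^k$ and then compute each mixed moment $\ee[(W')^r W^{k-r}]$. Writing $(\chi^{\tau})^r = \sum_{\phi} m_{\phi}(\tau^r) \chi^{\phi}$ and similarly for the $(k-r)$th power, I would condition on $g$ so that only the $(W')^r$ factor carries the $\alpha$-dependence, and apply Lemma \ref{dens1} term-by-term to obtain
\[ \ee[\chi^{\phi}(h\alpha h^{-1}g)|g] = \frac{\chi^{\phi}(\alpha)}{dim(\phi)} \chi^{\phi}(g). \]
This reduces $\ee[(W')^r W^{k-r}]$ to
\[ \sum_{\phi,\psi} m_{\phi}(\tau^r)\, m_{\psi}(\tau^{k-r})\, \frac{\chi^{\phi}(\alpha)}{dim(\phi)}\, \ee[\chi^{\phi}(g) \chi^{\psi}(g)]. \]
Orthogonality of irreducible characters gives $\ee[\chi^{\phi}\chi^{\psi}] = \delta_{\phi^*,\psi}$, collapsing the double sum to the diagonal $\psi = \phi^*$; and since $\chi^{\tau}$ is real, $\tau^{k-r}$ is self-dual, so $m_{\phi^*}(\tau^{k-r}) = m_{\phi}(\tau^{k-r})$. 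Inserting the binomial weights $(-1)^{k-r}\binom{k}{r}$ then produces the formula in part (1).

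For part (2), I would plug $k=4$ into part (1) and organize the five terms. Since $\tau$ is irreducible and only the trivial representation appears in $\tau^0$, the coefficients simplify: $m_{\phi}(\tau^0) = \delta_{\phi,\mathrm{triv}}$ and $m_{\phi}(\tau) = \delta_{\phi,\tau}$. Hence the $r=0$ and $r=4$ contributions each equal $m_{\mathrm{triv}}(\tau^4)$, the $r=1$ and $r=3$ contributions each equal $-4\, m_{\tau}(\tau^3)\, \chi^{\tau}(\alpha)/dim(\tau)$, and the $r=2$ term contributes $6 \sum_{\phi} m_{\phi}(\tau^2)^2 \chi^{\phi}(\alpha)/dim(\phi)$. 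The crucial simplification is the identity
\[ m_{\mathrm{triv}}(\tau^4) \;=\; m_{\tau}(\tau^3) \;=\; \sum_{\phi} m_{\phi}(\tau^2)^2, \]
all three of which equal $\ee[(\chi^{\tau})^4]$ by orthogonality together with the reality of $\chi^{\tau}$. Writing $M$ for this common value, the terms not involving the $r=2$ sum rearrange as $2M - 8M\,\chi^{\tau}(\alpha)/dim(\tau) = M\bigl[8(1 - \chi^{\tau}(\alpha)/dim(\tau)) - 6\bigr]$, and combining with the $r=2$ sum yields the stated expression.

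The main subtlety I anticipate is the self-duality bookkeeping. To obtain the unconjugated multiplicities $m_{\phi}(\tau^{k-r})$ in the answer, rather than $m_{\phi^*}(\tau^{k-r})$, one must use that $\tau$ is self-dual; and to see that each $\chi^{\phi}(\alpha)$ appearing in the answer is real, one uses that the conjugacy class of $\alpha$ is self-inverse. The same reality considerations ensure that the three quantities identified with $\ee[(\chi^{\tau})^4]$ in the collapsing step of part (2) are unambiguously equal.
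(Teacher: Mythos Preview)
Your proposal is correct and, for part (1), follows the paper's argument essentially verbatim: condition on $g$, apply Lemma~\ref{dens1} to each irreducible constituent of $(\chi^\tau)^r$, then integrate over $g$ using character orthogonality. You are in fact a bit more careful than the paper in making the self-duality step explicit (the paper silently identifies $\int_G \chi^\tau(g)^{k-r}\chi^\phi(g)$ with $m_\phi(\tau^{k-r})$).

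For part (2) there is a small but genuine organizational difference. You compute the five binomial terms directly and then invoke the identity $m_{\mathrm{triv}}(\tau^4)=m_\tau(\tau^3)=\sum_\phi m_\phi(\tau^2)^2=\ee[(\chi^\tau)^4]$ to collapse them. The paper instead observes that setting $\alpha$ equal to the identity in part (1) forces $W'=W$ and hence
\[
0=\sum_{r=0}^4(-1)^r\binom{4}{r}\sum_\phi m_\phi(\tau^r)m_\phi(\tau^{4-r}),
\]
so that one may subtract this from the general formula and work entirely with factors $\bigl(1-\chi^\phi(\alpha)/\dim(\phi)\bigr)$. This has the advantage that the $r=0,4$ terms vanish automatically (only the trivial $\phi$ contributes, and its factor is zero), so one never needs to name $m_{\mathrm{triv}}(\tau^4)$ at all; the identity $m_\tau(\tau^3)=\sum_\phi m_\phi(\tau^2)^2$ is still used, exactly as in your argument. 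Your route is equally valid and arguably more transparent; the paper's $\alpha=\mathrm{id}$ trick is a slightly slicker bookkeeping device that generalizes cleanly to higher $k$.
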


\begin{proof} For the first assertion, note that \[ \ee[(W'-W)^k|g] =
\sum_{r=0}^k (-1)^{k-r} {k \choose r} \chi^{\tau}(g)^{k-r}
\ee[(W')^r|g].\] Arguing as in Lemma \ref{cond2} gives that this is equal
to \[ \sum_{r=0}^k (-1)^{k-r} {k \choose r} \chi^{\tau}(g)^{k-r}
\sum_{\phi} m_{\phi}(\tau^r) \frac{\chi^{\phi}(\alpha)}{dim(\phi)}
\chi^{\phi}(g).\] Thus
$\ee(W'-W)^k$ is equal to \begin{eqnarray*} & & \ee(\ee[(W'-W)^k|g])\\
& = & \sum_{r=0}^k (-1)^{k-r} {k \choose r} \sum_{\phi} m_{\phi}(\tau^r)
\frac{\chi^{\phi}(\alpha)}{dim(\phi)} \int_{g \in G} \chi^{\tau}(g)^{k-r}
\chi^{\phi}(g)\\ & = & \sum_{r=0}^k (-1)^{k-r} {k \choose r} \sum_{\phi}
m_{\phi}(\tau^r) m_{\phi}(\tau^{k-r})
\frac{\chi^{\phi}(\alpha)}{dim(\phi)}. \end{eqnarray*}

    For the second assertion, note by the first assertion that \[
\ee(W'-W)^4 = \sum_{r=0}^4 (-1)^{r} {4 \choose r} \sum_{\phi}
m_{\phi}(\tau^r) m_{\phi}(\tau^{4-r})
\frac{\chi^{\phi}(\alpha)}{dim(\phi)}.\] If $\alpha$ is the identity
element of $G$, then $W'=W$ which implies that \[ 0 = \sum_{r=0}^4
(-1)^{r} {4 \choose r} \sum_{\phi} m_{\phi}(\tau^r) m_{\phi}(\tau^{4-r})
.\] Thus for general $\alpha$,
\[ \ee(W'-W)^4 = - \sum_{r=0}^4 (-1)^{r} {4 \choose r} \sum_{\phi} m_{\phi}(\tau^r)
m_{\phi}(\tau^{4-r}) \left( 1 - \frac{\chi^{\phi}(\alpha)}{dim(\phi)}
\right).\] Observe that the $r=0,4$ terms in this sum vanish, since the
only contribution could come from the trivial representation, which
contributes 0. The $r=2$ term is \[ -6 \sum_{\phi}
\left(1-\frac{\chi^{\phi}(\alpha)}{dim(\phi)} \right)
m_{\phi}(\tau^2)^2.\] The $r=1,3$ terms are equal and together contribute
\begin{eqnarray*} 8 \left(1-\frac{\chi^{\tau}(\alpha)}{dim(\tau)}
\right) m_{\tau}(\tau^3) & = & 8
\left(1-\frac{\chi^{\tau}(\alpha)}{dim(\tau)} \right) \int_{g \in G}
\chi^{\tau}(g)^4\\ & = & 8 \left(1-\frac{\chi^{\tau}(\alpha)}{dim(\tau)}
\right) \int_{g \in G} \left| \sum_{\phi} m_{\phi}(\tau^2) \chi^{\phi}(g)
\right|^2\\ & = & 8 \left(1-\frac{\chi^{\tau}(\alpha)}{dim(\tau)} \right)
\sum_{\phi} m_{\phi}(\tau^2)^2. \end{eqnarray*} This completes the proof.
\end{proof}

    Putting the pieces together, one obtains the following
    theorem.

\begin{theorem}
 \label{CLTrealgroup} Let $G$ be a compact Lie group and let $\tau$ be
a non-trivial irreducible representation of $G$ whose character is real
valued. Fix a non-identity element $\alpha$ with the property that
$\alpha$ and $\alpha^{-1}$ are conjugate. Let $W=\chi^{\tau}(g)$ where $g$
is chosen from the Haar measure of $G$. Then for all real $x_0$,
\begin{eqnarray*} & & \left| \pp(W \leq x_0) - \frac{1}{\sqrt{2 \pi}}
\int_{-\infty}^{x_0} e^{-\frac{x^2}{2}} dx \right|
\\ & \leq & \sqrt{\sum_{\phi}^{\ \ *} m_{\phi}(\tau^2)^2
\left[ 2 - \frac{1}{a} \left(1 - \frac{\chi^{\phi}(\alpha)}{dim(\phi)}
\right) \right]^2}\\
& & + \left[ \frac{1}{\pi} \sum_{\phi} m_{\phi}(\tau^2)^2 \left(8 -
\frac{6}{a} \left(1-\frac{\chi^{\phi}(\alpha)}{dim(\phi)} \right) \right)
\right]^{1/4}. \end{eqnarray*} Here
$a=1-\frac{\chi^{\tau}(\alpha)}{dim(\tau)}$, the first sum is over all
non-trivial irreducible representations of $G$, and the second sum is over
all irreducible representations of $G$. \end{theorem}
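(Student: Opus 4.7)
The plan is to simply feed the exchangeable pair $(W,W')$ defined in the subsection into Theorem \ref{steinbound} and then translate the two resulting quantities $\sqrt{\mathrm{Var}(\ee[(W'-W)^2|W])}/a$ and $(2\pi)^{-1/4}\sqrt{(1/a)\ee|W'-W|^3}$ into character-theoretic data using the earlier lemmas. By Lemma \ref{mom1} the linearity hypothesis $\ee(W'|W)=(1-a)W$ holds with
\[
a=1-\frac{\chi^{\tau}(\alpha)}{dim(\tau)},
\]
and $\ee(W^2)=1$ by orthogonality of characters, so Theorem \ref{steinbound} applies.

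For the first term, I would combine Lemma \ref{majorize} (passing from conditioning on $W$ to conditioning on $g$) with the explicit formula from Lemma \ref{condvar}. The key algebraic observation is that
\[
1+\frac{\chi^{\phi}(\alpha)}{dim(\phi)}-\frac{2\chi^{\tau}(\alpha)}{dim(\tau)}
= 2a-\Bigl(1-\frac{\chi^{\phi}(\alpha)}{dim(\phi)}\Bigr),
\]
so that dividing inside the square root by $a^{2}$ reshapes Lemma \ref{condvar} into exactly
\[
\frac{1}{a}\sqrt{\mathrm{Var}(\ee[(W'-W)^{2}|g])}
=\sqrt{\sum_{\phi}{}^{*}\,m_{\phi}(\tau^{2})^{2}\Bigl[2-\tfrac{1}{a}\bigl(1-\tfrac{\chi^{\phi}(\alpha)}{dim(\phi)}\bigr)\Bigr]^{2}},
\]
which is the first term of the bound.

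For the second term I would first apply Cauchy--Schwarz in the form $\ee|W'-W|^{3}\le\sqrt{\ee(W'-W)^{2}\cdot\ee(W'-W)^{4}}$, and then substitute $\ee(W'-W)^{2}=2a$ from Lemma \ref{var} together with the explicit formula for $\ee(W'-W)^{4}$ from part (2) of Lemma \ref{highmom}. A short computation gives
\[
(2\pi)^{-1/4}\sqrt{\tfrac{1}{a}\ee|W'-W|^{3}}\le (2\pi)^{-1/4}\Bigl(\tfrac{2}{a}\,\ee(W'-W)^{4}\Bigr)^{1/4}
=\Bigl(\tfrac{1}{\pi a}\,\ee(W'-W)^{4}\Bigr)^{1/4},
\]
and factoring the $1/a$ into the summands of Lemma \ref{highmom}(2) yields the second displayed term.

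The only mildly delicate step is the bookkeeping in the third-moment piece: one has to be careful that the $2^{1/4}$ produced by Cauchy--Schwarz cancels the $2^{1/4}$ hidden in $(2\pi)^{-1/4}$ to leave exactly $\pi^{-1/4}$, and that the factor $1/a$ is absorbed cleanly into the bracket $[8-(6/a)(1-\chi^{\phi}(\alpha)/dim(\phi))]$ in the statement. Everything else is assembly: the qualifier on the first sum (non-trivial $\phi$) comes for free from Lemma \ref{condvar}, while the trivial representation is legitimately kept in the second sum because $\ee(W'-W)^{4}$ includes it.
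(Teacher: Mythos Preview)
Your proposal is correct and follows the paper's proof essentially line for line: apply Theorem \ref{steinbound}, bound the variance term via Lemmas \ref{majorize} and \ref{condvar} (with the rewriting $1+\chi^{\phi}(\alpha)/\dim(\phi)-2\chi^{\tau}(\alpha)/\dim(\tau)=2a-(1-\chi^{\phi}(\alpha)/\dim(\phi))$), and bound $\ee|W'-W|^3$ by Cauchy--Schwarz together with Lemma \ref{mom2} and Lemma \ref{highmom}(2). Your careful tracking of the constants in the second term is more explicit than the paper's, but the argument is identical.
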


\begin{proof} One applies Theorem \ref{steinbound} to the exchangeable pair
$(W,W')$ of this subsection. By Lemmas \ref{majorize} and \ref{condvar},
the first term in Theorem \ref{steinbound} gives the first term in the
theorem. To upper bound the second term in Theorem \ref{steinbound}, note
by the Cauchy-Schwartz inequality that \[ \ee|W'-W|^3 \leq
\sqrt{\ee(W'-W)^2 \ee(W'-W)^4}.\] Now use Lemma \ref{mom2} and part 2 of
Lemma \ref{highmom}. \end{proof}

\subsection{Example: $USp(2n,\cc)$} \label{Sp}

This subsection studies the distribution of $\chi^{\tau}(g)$, where $\tau$
is the $2n$ dimensional defining representation of $USp(2n,\cc)$. The only
representation theoretic fact needed is Lemma \ref{charsp}, which is the
$k=2$ case of a formula from page 200 of \cite{Su} giving the
decomposition of $\tau^k$ into irreducible representations. In its
statement, we let $x_1,x_1^{-1},\cdots,x_n,x_n^{-1}$ denote the
eigenvalues of an element of $USp(2n,\cc)$.

\begin{lemma} \label{charsp} For $n \geq 2$, the square of the defining
representation of the group $USp(2n,\cc)$ decomposes in a multiplicity
free way as the sum of the following three irreducible representations:
\begin{itemize}
\item The trivial representation, with character $1$
\item The representation with character $\frac{1}{2} (\sum_i x_i +
x_i^{-1})^2 + \frac{1}{2} \sum_i (x_i^2+x_i^{-2})$
\item The representation with character $\frac{1}{2} (\sum_i x_i +
x_i^{-1})^2 - \frac{1}{2} \sum_i (x_i^2+x_i^{-2}) - 1$
\end{itemize}
\end{lemma}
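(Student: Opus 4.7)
The plan is to decompose $\tau\otimes\tau$ first into symmetric and alternating parts, and then to split off the trivial summand inside the alternating part using the invariant symplectic form. At the level of characters, the starting point is the universal identity
\[
\chi^{\tau\otimes\tau}(g)=\chi^{\tau}(g)^{2}=\underbrace{\tfrac{1}{2}\bigl(\chi^{\tau}(g)^{2}+\chi^{\tau}(g^{2})\bigr)}_{\chi^{S^{2}\tau}(g)}+\underbrace{\tfrac{1}{2}\bigl(\chi^{\tau}(g)^{2}-\chi^{\tau}(g^{2})\bigr)}_{\chi^{\Lambda^{2}\tau}(g)}.
\]
Since the eigenvalues of $g\in USp(2n,\cc)$ come in pairs $x_{i},x_{i}^{-1}$, one has $\chi^{\tau}(g)=\sum_{i}(x_{i}+x_{i}^{-1})$ and $\chi^{\tau}(g^{2})=\sum_{i}(x_{i}^{2}+x_{i}^{-2})$, so I can read off the characters of $S^{2}\tau$ and $\Lambda^{2}\tau$ as $\frac{1}{2}(\sum_{i}x_{i}+x_{i}^{-1})^{2}\pm\frac{1}{2}\sum_{i}(x_{i}^{2}+x_{i}^{-2})$.

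Next I would account for the trivial summand. The defining representation preserves a non-degenerate skew-symmetric form $\omega\in\Lambda^{2}V^{*}$, and under the $USp(2n,\cc)$-isomorphism $V\cong V^{*}$ this gives a non-zero invariant vector in $\Lambda^{2}V$. Hence the trivial representation appears in $\Lambda^{2}\tau$ with multiplicity at least one, and after subtracting its character $1$ from the expression above I obtain the third bullet of the lemma. Combined with the fact that the multiplicity of the trivial representation in $\tau^{2}$ equals $\dim\mathrm{Hom}_{G}(\tau,\tau^{*})$, which is $1$ (so only one trivial copy appears in all of $\tau\otimes\tau$), this exhausts the trivial part.

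It remains to check that the two non-trivial pieces are themselves irreducible. The cleanest route is to identify them with known fundamental representations of the root system $C_{n}$: $S^{2}\tau$ is isomorphic to the adjoint representation $\mathfrak{sp}(2n,\cc)$ (of dimension $n(2n+1)$), which is irreducible because $\mathfrak{sp}(2n,\cc)$ is simple for $n\ge 1$; and $\Lambda^{2}\tau$ minus its trivial summand is the second fundamental representation with highest weight $\omega_{2}$ (of dimension $\binom{2n}{2}-1$), which is irreducible by Weyl's construction. A dimension check $1+n(2n+1)+\bigl(\binom{2n}{2}-1\bigr)=4n^{2}$ confirms that nothing has been missed. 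For readers who prefer a self-contained verification, irreducibility can instead be checked by computing the $L^{2}$-norms of the two candidate characters via the Weyl integration formula on the maximal torus of $USp(2n,\cc)$, with density proportional to $\prod_{i<j}(2\cos\theta_{i}-2\cos\theta_{j})^{2}\prod_{i}(2\sin\theta_{i})^{2}$, and showing each integral equals $1$.

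The main obstacle is the irreducibility step; the character identity and the appearance of the trivial summand are essentially bookkeeping, but verifying that no further decomposition occurs requires either invoking the structure theory of $\mathfrak{sp}(2n,\cc)$ or performing the explicit Weyl-integral computation. The hypothesis $n\ge 2$ enters here: for $n=1$ the symplectic and special unitary groups coincide ($USp(2)=SU(2)$), the third representation has dimension $0$, and the decomposition degenerates.
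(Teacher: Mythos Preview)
Your argument is correct. The paper does not actually prove this lemma; it simply records it as the $k=2$ case of a decomposition formula on page~200 of Sundaram \cite{Su}, and remarks that it can alternatively be read off from the character tables on page~219 of Weyl \cite{W}. Your proof is therefore not the paper's approach but a self-contained substitute for it: you use the universal splitting $\tau\otimes\tau = S^{2}\tau \oplus \Lambda^{2}\tau$, locate the trivial summand in $\Lambda^{2}\tau$ via the invariant symplectic form, and then verify irreducibility of the two nontrivial pieces by identifying $S^{2}\tau$ with the adjoint representation of the simple algebra $\mathfrak{sp}(2n,\cc)$ and $\Lambda^{2}\tau/\mathbf{1}$ with the fundamental representation of highest weight $\omega_{2}$. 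This is the standard structural argument and is entirely adequate here; what it buys over a bare citation is that a reader can check the claim without consulting \cite{Su} or \cite{W}, at the cost of invoking some basic structure theory of $C_{n}$. Your remark on the degeneration at $n=1$ is also accurate.
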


{\it Remark:} Lemma \ref{charsp} could also be easily guessed (and proved)
by looking at the character formulas for $USp(2n,\cc)$ on page 219 of
\cite{W}.

\begin{theorem} \label{Spthm}
Let $g$ be chosen from the Haar measure of $USp(2n,\cc)$, where $n \geq
2$. Let $W(g)$ be the trace of $g$. Then for all real $x_0$, \[ \left|
\pp(W \leq x_0) - \frac{1}{\sqrt{2 \pi}} \int_{-\infty}^{x_0}
e^{-\frac{x^2}{2}} dx \right| \leq \frac{\sqrt{2}}{n}.\]
\end{theorem}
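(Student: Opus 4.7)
The plan is to apply Theorem \ref{CLTrealgroup} to a family of elements $\alpha \in USp(2n,\cc)$ indexed by an angle $\theta$ that will eventually be sent to $0$. Since every element of $USp(2n,\cc)$ is conjugate to its inverse (all irreducible characters are real), the self-inverse hypothesis on the conjugacy class of $\alpha$ is automatic, leaving us free to choose $\alpha$.

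I would take $\alpha$ with symplectic eigenvalue pairs $(e^{i\theta}, e^{-i\theta}, 1, 1, \ldots, 1)$, only one pair being nontrivial, and set $c = \cos\theta$. By Lemma \ref{charsp}, $\tau^2$ decomposes multiplicity-freely into the trivial representation, a summand $\phi_1$ of dimension $n(2n+1)$, and a summand $\phi_2$ of dimension $(2n+1)(n-1)$. Substituting $\sum_i (x_i + x_i^{-1}) = 2c + 2(n-1)$ and $\sum_i (x_i^2 + x_i^{-2}) = 4c^2 + 2(n-2)$ into the character formulas of Lemma \ref{charsp} and simplifying, I expect to obtain
\[ a = \frac{1-c}{n}, \qquad 1 - \frac{\chi^{\phi_1}(\alpha)}{\dim(\phi_1)} = \frac{4(1-c)(n+c)}{n(2n+1)}, \qquad 1 - \frac{\chi^{\phi_2}(\alpha)}{\dim(\phi_2)} = \frac{4(1-c)}{2n+1}. \]
The key observation is that the common factor $(1-c)$ cancels against $a = (1-c)/n$ in the combinations appearing in Theorem \ref{CLTrealgroup}, so $(1/a)(1 - \chi^{\phi_1}/\dim(\phi_1)) = 4(n+c)/(2n+1)$ and $(1/a)(1 - \chi^{\phi_2}/\dim(\phi_2)) = 4n/(2n+1)$.

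After further algebra, the first summand of the Theorem \ref{CLTrealgroup} bound collapses to $(2/(2n+1))\sqrt{(1-2c)^2 + 1}$, and the expression inside the fourth root of the second summand collapses to $24(1-c)/[\pi(2n+1)]$. Thus as $c \to 1^-$ (i.e., $\alpha \to I$), the second summand tends to $0$ while the first tends to $2\sqrt{2}/(2n+1)$. Since the Kolmogorov distance is a single fixed number and Theorem \ref{CLTrealgroup} yields a valid upper bound for every admissible $\alpha$, the distance is at most the infimum of the bounds produced by this family, namely $2\sqrt{2}/(2n+1)$, which is at most $\sqrt{2}/n$ because $2n+1 \geq 2n$.

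The main obstacle is the simplification inside the second summand: one must verify that the $+8$ contributed by the trivial representation is exactly offset by the $\phi_1$ and $\phi_2$ contributions as $c \to 1$, leaving a remainder of order $(1-c)$. Equivalently, one needs $\ee(W'-W)^4 = O(a^2)$ in the limit, a nontrivial cancellation (the trivial representation alone would contribute $8a = O(a)$) that follows from Lemma \ref{highmom}(2). The first-summand simplification, which relies on the $c$-independence of the two character ratios after division by $a$, is the next most delicate arithmetic.
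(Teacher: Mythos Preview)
Your proposal is correct and follows essentially the same route as the paper: apply Theorem~\ref{CLTrealgroup} with $\tau$ the defining representation and $\alpha$ a rotation in a single coordinate plane, compute the two error terms via Lemma~\ref{charsp}, and let $\theta\to 0$ so that the second term vanishes and the first becomes $2\sqrt{2}/(2n+1)\le\sqrt{2}/n$. Your intermediate formulas for $a$, the two ratios $(1/a)\bigl(1-\chi^{\phi_i}(\alpha)/\dim(\phi_i)\bigr)$, the first error term $(2/(2n+1))\sqrt{(1-2c)^2+1}$, and the quantity $24(1-c)/[\pi(2n+1)]$ under the fourth root all agree with the paper's computations; the only slip is the remark that both character ratios become ``$c$-independent'' after division by $a$, since in fact $4(n+c)/(2n+1)$ still depends on $c$, but this does not affect the argument.
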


\begin{proof} One applies Theorem \ref{CLTrealgroup}, with
 $\tau$ the defining representation, and $\alpha$ an element of type
 $\{x_1^{\pm 1},\cdots,x_n^{\pm n} \}$ where $x_1=\cdots =
x_{n-1}=1$ and $x_n=e^{i \theta}$. Then $\alpha$ is conjugate to
$\alpha^{-1}$ and one computes that $a=\frac{1-cos(\theta)}{n}$. Using
Lemma \ref{charsp}, one calculates that the first error term in Theorem
\ref{CLTrealgroup} is equal to $\frac{2
\sqrt{4cos(\theta)^2-4cos(\theta)+2}} {2n+1}$. One computes that the
second error term is equal to $\left[ \frac{24(1-cos(\theta))}{\pi (2n+1)}
\right]^{1/4}$. Since these bounds hold for all $\theta$ and are
continuous in $\theta$, the bounds hold in the limit that $\theta
\rightarrow 0$. This gives an upper bound of $\frac{2 \sqrt{2}}{2n+1} \leq
\frac{\sqrt{2}}{n}$, as claimed. \end{proof}

\subsection{\bf Example: $SO(2n+1,\rr)$} \label{SO}

We investigate the distribution of $\chi^{\tau}(g)$, where $\tau$ is the
$2n+1$-dimensional defining representation of $SO(2n+1,\rr)$. The only
ingredient from representation theory needed is Lemma \ref{charSOodd},
which is the $k=2$ case of a formula from page 204 of \cite{Su} giving the
decomposition of $\tau^k$ into irreducible representations (it is also
easily obtained by inspecting the character formulas on page 228 of
\cite{W}). In its statement, we let $x_1,x_1^{-1},\cdots,x_n,x_n^{-1},1$
be the eigenvalues of an element of $SO(2n+1,\rr)$.

\begin{lemma} \label{charSOodd} For $n \geq 2$, the square of the defining
representation of $SO(2n+1,\rr)$ decomposes in a multiplicity free way as
the sum of the following three irreducible representations:
\begin{itemize}
\item The trivial representation, with character $1$
\item The representation with character $\frac{1}{2} (\sum_i x_i +
x_i^{-1})^2 + \frac{1}{2} \sum_i (x_i^2+x_i^{-2}) + \sum_i (x_i+x_i^{-1})$
\item The representation with character $\frac{1}{2} (\sum_i x_i +
x_i^{-1})^2 - \frac{1}{2} \sum_i (x_i^2+x_i^{-2}) + \sum_i (x_i+x_i^{-1})$
\end{itemize}
\end{lemma}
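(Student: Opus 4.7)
The plan is to decompose $\tau^{\otimes 2}$ into its symmetric and antisymmetric parts and then identify each with the stated characters. For any representation $\tau$ of a compact group,
\[ \chi^{\mathrm{Sym}^2 \tau}(g) = \tfrac{1}{2}(\chi^\tau(g)^2 + \chi^\tau(g^2)), \qquad \chi^{\Lambda^2 \tau}(g) = \tfrac{1}{2}(\chi^\tau(g)^2 - \chi^\tau(g^2)). \]
Parameterize $g \in SO(2n+1,\rr)$ by eigenvalues $x_1,x_1^{-1},\ldots,x_n,x_n^{-1},1$ and set $s := \sum_{i=1}^n (x_i + x_i^{-1})$. Then $\chi^\tau(g) = s+1$ and $\chi^\tau(g^2) = \sum_i (x_i^2 + x_i^{-2}) + 1$, so a direct expansion of $(s+1)^2 = s^2 + 2s + 1$ yields
\[ \chi^{\mathrm{Sym}^2\tau}(g) = 1 + \bigl[\tfrac{1}{2}s^2 + \tfrac{1}{2}\textstyle\sum_i(x_i^2+x_i^{-2}) + s\bigr], \qquad \chi^{\Lambda^2\tau}(g) = \tfrac{1}{2}s^2 - \tfrac{1}{2}\textstyle\sum_i(x_i^2+x_i^{-2}) + s. \]
The bracketed expression exactly matches the character listed for the second representation, and the antisymmetric character matches the character listed for the third. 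Thus, modulo irreducibility, the decomposition is established, with the trivial character splitting off from $\mathrm{Sym}^2 \tau$ because the defining representation carries an $SO(2n+1,\rr)$-invariant bilinear form.

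The remaining step is to show that the two non-trivial summands are themselves irreducible. The quickest route is to identify $\Lambda^2 \tau$ with the adjoint representation of the simple Lie algebra $\mathfrak{so}(2n+1,\cc)$ of type $B_n$, which is irreducible for $n \geq 1$, and to identify the complement of the trivial inside $\mathrm{Sym}^2 \tau$ with the traceless symmetric tensors, the well-known irreducible of highest weight $2\omega_1$. Alternatively, and more self-contained within the paper's framework, one may verify $\int_G |\chi(g)|^2 \, dg = 1$ for each of the two nontrivial characters directly via the Weyl integration formula for $SO(2n+1,\rr)$, reducing the claim to a finite symmetric-function computation in $x_1^{\pm 1},\ldots,x_n^{\pm 1}$.

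The main obstacle is precisely this irreducibility step: the character identity itself is a one-line polynomial manipulation, but ruling out further decomposition of $\mathrm{Sym}^2_0 \tau$ and $\Lambda^2 \tau$ either requires invoking the representation theory of $B_n$ or performing the explicit $L^2$-norm integration. The remark following the lemma suggests the author's preferred shortcut: match the computed characters against Weyl's character formulas on page 228 of \cite{W}, where they appear as the characters of the two fundamental irreducibles in question.
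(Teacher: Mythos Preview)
Your argument is correct. The symmetric/alternating square split, together with the identifications $\Lambda^2\tau\cong\mathfrak{so}(2n+1)$ (adjoint, hence irreducible for the simple algebra $B_n$) and $\mathrm{Sym}^2_0\tau\cong V_{2\omega_1}$ (traceless symmetric tensors), gives a complete and self-contained proof.

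By comparison, the paper does not prove this lemma at all: it simply records it as the $k=2$ case of a formula on page~204 of \cite{Su}, with the parenthetical remark that one can also read it off from the character tables on page~228 of \cite{W}. So your approach differs from the paper's in that you actually supply an argument rather than a citation. What the paper's route buys is brevity and a pointer to the general $\tau^{\otimes k}$ decomposition; what your route buys is independence from those references and a transparent explanation of \emph{why} there are exactly three summands (one from $\Lambda^2$, two from $\mathrm{Sym}^2$). Your closing paragraph already anticipates this: matching against Weyl's formulas is indeed the shortcut the paper takes, while your $\mathrm{Sym}/\Lambda$ argument is the more structural alternative.
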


    This leads to the following theorem.

\begin{theorem} \label{SOoddthm}
 Let $g$ be chosen from the Haar measure of $SO(2n+1,\rr)$, where $n \geq
 2$. Let $W(g)$ be the trace of $g$. Then for all real $x_0$, \[
 \left| \pp(W \leq x_0) - \frac{1}{\sqrt{2 \pi}} \int_{-\infty}^{x_0}
 e^{-\frac{x^2}{2}} dx \right| \leq \frac{\sqrt{2}}{n}.\]
\end{theorem}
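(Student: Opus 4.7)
The plan is to mirror the argument for $USp(2n,\cc)$ in Theorem \ref{Spthm}, applying Theorem \ref{CLTrealgroup} with $\tau$ the $(2n+1)$-dimensional defining representation and with the element $\alpha \in SO(2n+1,\rr)$ having eigenvalues $\{x_1^{\pm 1},\ldots,x_n^{\pm 1},1\}$ where $x_1=\cdots=x_{n-1}=1$ and $x_n=e^{i\theta}$. Since $\alpha$ has a real conjugacy class, $\alpha$ and $\alpha^{-1}$ are conjugate, which is what Theorem \ref{CLTrealgroup} requires. One then computes $\chi^{\tau}(\alpha) = 2(n-1)+2\cos\theta + 1$ and $\dim(\tau) = 2n+1$, so that
\[ a \;=\; 1 - \frac{\chi^{\tau}(\alpha)}{\dim(\tau)} \;=\; \frac{2(1-\cos\theta)}{2n+1}. \]

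The second ingredient is Lemma \ref{charSOodd}, which gives $\tau^2 = \mathbf{1} \oplus \phi_+ \oplus \phi_-$ with multiplicity free decomposition, so that $m_\phi(\tau^2) \in \{0,1\}$ for every irreducible $\phi$. One reads off $\dim(\phi_+) = n(2n+3)$ and $\dim(\phi_-)=n(2n+1)$ by substituting $x_i=1$, and evaluates $\chi^{\phi_\pm}(\alpha)$ using $\sum_i(x_i+x_i^{-1}) = 2(n-1)+2\cos\theta$ and $\sum_i(x_i^2+x_i^{-2})=2(n-1)+2\cos 2\theta$. After factoring, one finds the clean expressions
\[ 1 - \frac{\chi^{\phi_+}(\alpha)}{\dim(\phi_+)} = \frac{2(1-\cos\theta)(2\cos\theta+2n+1)}{n(2n+3)}, \qquad 1 - \frac{\chi^{\phi_-}(\alpha)}{\dim(\phi_-)} = \frac{2(2n-1)(1-\cos\theta)}{n(2n+1)}, \]
so that the factors $(1-\cos\theta)$ cancel with $a$ in the error bound from Theorem \ref{CLTrealgroup}.

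Substituting into the first error term and letting $\theta \to 0$, each of the two nontrivial summands contributes $\left[2-\tfrac{1}{a}(1-\chi^{\phi_\pm}(\alpha)/\dim(\phi_\pm))\right]^2 = \tfrac{1}{n^2}$ in the limit, producing a combined bound of $\sqrt{2/n^2} = \sqrt{2}/n$. The second error term vanishes in the limit (the contributions from $\mathbf{1}$, $\phi_+$, and $\phi_-$ sum to $0$), exactly as happens in the $USp$ case. By continuity in $\theta$, the limiting bound controls the original quantity, yielding the stated $\sqrt{2}/n$ estimate. The only real work is the algebraic simplification of the character ratios; this is straightforward once one factors out the $(1-\cos\theta)$, and the parallel with Theorem \ref{Spthm} makes it essentially a bookkeeping exercise rather than a substantial obstacle.
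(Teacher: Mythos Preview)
Your proposal is correct and follows essentially the same approach as the paper's own proof: apply Theorem \ref{CLTrealgroup} with the defining representation and the rotation $\alpha$, use Lemma \ref{charSOodd} for the decomposition of $\tau^2$, and let $\theta \to 0$. Your explicit formulas for $1-\chi^{\phi_\pm}(\alpha)/\dim(\phi_\pm)$ and your observation that the two nontrivial summands each contribute $1/n^2$ to the first error term (and that the three contributions to the second error term cancel in the limit) agree with the paper's computations; the paper simply records the final values, namely $\sqrt{2}/n$ for the first term and $\left[\tfrac{12(2n+1)(1-\cos\theta)}{\pi n(2n+3)}\right]^{1/4}$ for the second.
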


\begin{proof} One applies Theorem \ref{CLTrealgroup}, taking
 $\tau$ to be the defining representation, and $\alpha$
 to be an element of type $\{x_1^{\pm 1},\cdots,x_n^{\pm n},1 \}$
 where $x_1=\cdots = x_{n-1}=1$ and $x_n=e^{i \theta}$ (i.e. $\alpha$ is a
 rotation by $\theta$). Then $\alpha$ is conjugate
 to $\alpha^{-1}$ and $a=\frac{2(1-cos(\theta))}{2n+1}$. Using Lemma \ref{charSOodd}
 one computes that in the $\theta \rightarrow 0$ limit the first error term in Theorem
 \ref{CLTrealgroup} is equal to $\frac{\sqrt{2}}{n}$. One calculates
 that the second error term is equal to $\left[ \frac{12(2n+1)(1-cos(\theta))}
 {\pi n (2n+3) } \right]^{1/4}$. The proof of the theorem is completed by
 noting that this goes to $0$ as $\theta \rightarrow 0$. \end{proof}

\subsection{Example: $O(2n,\rr)$} \label{O}

We consider the distribution of $\chi^{\tau}(g)$, where $\tau$ is the
$2n$-dimensional defining representation of $O(2n,\rr)$. The only
representation theoretic information needed is Lemma \ref{decomp}, which
is the $k=2$ case of a result of Proctor \cite{Pr} (and also not difficult
to obtain from the character formulas on page 228 of \cite{W}). In its
statement, we let $x_1,x_1^{-1},\cdots,x_n,x_n^{-1}$ be the eigenvalues of
an element of $O(2n,\rr)$.

\begin{lemma}
 \label{decomp} For $n \geq 2$, the square of the defining representation
 of $O(2n,\rr)$ decomposes in a multiplicity free way as the sum of the
 following three irreducible representations:
\begin{itemize}
\item The trivial representation, with character $1$
\item The representation with character $\frac{1}{2} \left( \sum_i
 x_i+ \overline{x_i} \right)^2 - \frac{1}{2} \sum_i \left( x_i^2
 + \overline{x_i}^2 \right)$
\item The representation with character $\frac{1}{2} \left( \sum_i
x_i+ \overline{x_i} \right)^2 + \frac{1}{2} \sum_i \left( x_i^2 +
\overline{x_i}^2 \right) -1$
\end{itemize}
\end{lemma}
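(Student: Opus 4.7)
The plan is to decompose $\tau \otimes \tau$ via the standard splitting $\tau \otimes \tau = \mathrm{Sym}^2 \tau \oplus \wedge^2 \tau$. Using the character identities
\[
\chi_{\mathrm{Sym}^2 \tau}(g) = \tfrac{1}{2}\bigl(\chi^{\tau}(g)^2 + \chi^{\tau}(g^2)\bigr), \qquad \chi_{\wedge^2 \tau}(g) = \tfrac{1}{2}\bigl(\chi^{\tau}(g)^2 - \chi^{\tau}(g^2)\bigr),
\]
together with $\chi^{\tau}(g) = \sum_i (x_i + \overline{x_i})$ and $\chi^{\tau}(g^2) = \sum_i (x_i^2 + \overline{x_i}^2)$, one sees immediately that $\chi_{\wedge^2 \tau}$ agrees with the second listed character and $\chi_{\mathrm{Sym}^2 \tau}$ agrees with $1$ plus the third listed character.

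Next I would account for the trivial summand inside $\mathrm{Sym}^2 \tau$. Since $\tau$ is an orthogonal representation, it preserves a nondegenerate symmetric bilinear form on $\rr^{2n}$; this form is a nonzero $O(2n,\rr)$-invariant element of $\mathrm{Sym}^2 \tau^* \cong \mathrm{Sym}^2 \tau$, contributing exactly one copy of the trivial representation. Subtracting it off leaves a representation whose character is precisely the third listed character.

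The main obstacle is showing that the two nontrivial pieces are irreducible (which also implies that the three summands are distinct, since their characters are visibly different Laurent polynomials in the $x_i$). The most self-contained route is to compute the $L^2$-norm of each candidate character against Haar measure via Weyl's integration formula over the maximal torus, using the eigenvalue density displayed on page 228 of \cite{W}, and verify that each norm equals $1$. Alternatively, one identifies the two candidates as the irreducible $O(2n,\rr)$-representations of highest weights $(2,0,\ldots,0)$ and $(1,1,0,\ldots,0)$; both have last coordinate zero, so they remain irreducible on passage between $SO(2n,\rr)$ and $O(2n,\rr)$, and their Weyl characters can be read off directly from \cite{W}. The hypothesis $n \geq 2$ enters only to ensure that the weight $(1,1,0,\ldots,0)$ is nonzero, so that $\wedge^2 \tau$ is a nontrivial representation.
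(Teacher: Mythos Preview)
The paper does not actually prove this lemma: it simply cites it as the $k=2$ case of Proctor's tableau result \cite{Pr}, with the remark that it is also easy to extract from the character tables on page~228 of \cite{W}. Your $\mathrm{Sym}^2\oplus\wedge^2$ decomposition together with the invariant symmetric form is exactly the standard hands-on verification of that second route, and it is correct in outline.

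One small gap to patch: your highest-weight alternative for irreducibility asserts that both nontrivial pieces have ``last coordinate zero'' and hence pass unchanged between $SO(2n,\rr)$ and $O(2n,\rr)$. This is fine for $n\geq 3$, but for $n=2$ the weight of $\wedge^2\tau$ is $(1,1)$, whose last coordinate is not zero; indeed $\wedge^2\rr^4$ is \emph{reducible} over $SO(4,\rr)$, splitting into the $3$-dimensional self-dual and anti-self-dual pieces with highest weights $(1,1)$ and $(1,-1)$. The lemma is nevertheless true at $n=2$ because the element of $O(4,\rr)\setminus SO(4,\rr)$ swaps these two summands, so $\wedge^2\tau$ is irreducible over the full orthogonal group. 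Your first alternative---computing $\langle\chi,\chi\rangle=1$ directly over $O(2n,\rr)$ via Weyl integration---handles all $n\geq 2$ uniformly and avoids this case distinction; if you keep the highest-weight argument, add a sentence covering $n=2$ separately.
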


    This leads to the following result.

\begin{theorem} \label{Oeventhm}  Let $g$ be chosen from the Haar measure of
$O(2n,\rr)$, where $n \geq 2$. Let $W(g)$ be the trace of $g$. Then for
all real $x_0$, \[ \left| \pp(W \leq x_0) - \frac{1}{\sqrt{2 \pi}}
\int_{-\infty}^{x_0}
 e^{-\frac{x^2}{2}} dx \right| \leq \frac{\sqrt{2}}{n-1}.\]
\end{theorem}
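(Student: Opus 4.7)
The plan is to apply Theorem \ref{CLTrealgroup} with $\tau$ the $2n$-dimensional defining representation and with $\alpha$ the element whose eigenvalues are $\{1,\ldots,1,e^{i\theta},e^{-i\theta}\}$, that is, $x_1=\cdots=x_{n-1}=1$ and $x_n=e^{i\theta}$. Since $O(2n,\rr)$ contains orientation-reversing isometries, $\alpha$ is conjugate to $\alpha^{-1}$ via a reflection in the $2$-plane where $\alpha$ acts nontrivially, so the self-inverse conjugacy class hypothesis is met. From the trace $2(n-1)+2\cos\theta$ one immediately reads off $a=1-\chi^{\tau}(\alpha)/dim(\tau)=(1-\cos\theta)/n$.

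By Lemma \ref{decomp}, the non-trivial irreducible components of $\tau^2$ are two representations $\phi_1,\phi_2$ of dimensions $n(2n-1)$ and $(n+1)(2n-1)$, each of multiplicity one. The next step is to evaluate their characters at $\alpha$ using the closed formulas in Lemma \ref{decomp}, substituting the $x_j$ as above and simplifying with $2\cos^2\theta=1+\cos 2\theta$. After routine algebra, the quantities that actually enter Theorem \ref{CLTrealgroup} simplify to
\[ \frac{1}{a}\left(1-\frac{\chi^{\phi_1}(\alpha)}{dim(\phi_1)}\right)=\frac{4(n-1)}{2n-1}, \qquad \frac{1}{a}\left(1-\frac{\chi^{\phi_2}(\alpha)}{dim(\phi_2)}\right)=\frac{4n(n+\cos\theta)}{(n+1)(2n-1)}. \]
The first is independent of $\theta$; the second tends to $4n/(2n-1)$ as $\theta\to 0$.

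Plugging into the first error term of Theorem \ref{CLTrealgroup} and taking $\theta\to 0$, the two bracketed quantities equal $\pm 2/(2n-1)$, and their squares combine to give $2\sqrt{2}/(2n-1)$. For the second error term, the same substitutions show that the sum inside the fourth root is $O(1-\cos\theta)$ (the trivial, $\phi_1$, and $\phi_2$ contributions must cancel at $\theta=0$, as they do because $W'=W$ when $\alpha$ is the identity), so this term vanishes in the $\theta\to 0$ limit. Exactly as in the proofs of Theorems \ref{Spthm} and \ref{SOoddthm}, the bounds are continuous in $\theta$ and so pass to the limit, yielding an overall bound of $2\sqrt{2}/(2n-1)$. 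One finishes with the elementary inequality $2\sqrt{2}/(2n-1)\leq\sqrt{2}/(n-1)$, which holds iff $2(n-1)\leq 2n-1$, i.e.\ for every $n\geq 2$.

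The main technical obstacle is just the character bookkeeping: with $\tau^2$ splitting into two nontrivial pieces of rather different dimensions, one has to track how the factor $(n-1)$ (rather than $n$) arises in the numerator of $1-\chi^{\phi_1}(\alpha)/dim(\phi_1)$, since this is precisely what degrades the bound to $\sqrt{2}/(n-1)$ compared with the $\sqrt{2}/n$ of the two preceding examples.
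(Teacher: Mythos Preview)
Your proof is correct and follows exactly the paper's approach: the same choice of $\alpha$, the same use of Lemma~\ref{decomp}, and the same $\theta\to 0$ limit, arriving at the first error term $\sqrt{8}/(2n-1)\le\sqrt{2}/(n-1)$. One minor caution: the parenthetical ``because $W'=W$ when $\alpha$ is the identity'' does not by itself force the second error term to vanish (that observation only gives $\ee(W'-W)^4\to 0$, whereas the relevant quantity is $(\pi a)^{-1}\ee(W'-W)^4$, a $0/0$ form at $\theta=0$); the vanishing genuinely requires the explicit substitution you indicate, which indeed yields $\bigl[24n(1-\cos\theta)/(\pi(n+1)(2n-1))\bigr]^{1/4}\to 0$.
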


\begin{proof} Apply Theorem \ref{CLTrealgroup}, with
 $\tau$ the defining representation of $O(2n,\rr)$.
 We take $\alpha$ to be an element of type $\{x_1^{\pm
 1},\cdots,x_n^{\pm n} \}$ where $x_1=\cdots = x_{n-1}=1$ and
 $x_n=e^{i \theta}$ (i.e. $\alpha$ is a rotation by $\theta$).
 Then $\alpha$ is conjugate to $\alpha^{-1}$ and
 $a=\frac{1-cos(\theta)}{n}$. Lemma \ref{decomp} gives the decomposition
 of $\tau^2$ into irreducibles, and from this one calculates the first
 error term in Theorem \ref{CLTrealgroup}, and sees that in the
 $\theta \rightarrow 0$ limit it is equal to $\frac{\sqrt{8}}{2n-1}$.
 One computes that the second error term is equal to $\left[ \frac{24 n (1-cos(\theta))}
 {\pi (n+1) (2n-1) } \right]^{1/4}$. The proof of the theorem is completed by
 noting that this goes to $0$ as $\theta \rightarrow 0$. \end{proof}

\subsection{General theory (complex case)} \label{cgengroup1}

Let $G$ be a compact Lie group and $\tau$ be an irreducible representation
of $G$ whose character is not real valued. The random variable of interest
to us is $W= \frac{1}{\sqrt{2}} \left( \chi^{\tau}(g) +
\overline{\chi^{\tau}(g)} \right)$, where $g$ is chosen from the Haar
measure of $G$. It follows from the orthogonality relations for
irreducible characters of $G$ that $\ee(W)=0$ and $\ee(W^2)=1$.

We now define a pair $(W,W')$ by letting $W$ be as above and $W'=W(\alpha
g)$ where $\alpha$ is chosen uniformly at random from a fixed self-inverse
conjugacy class of $G$. As in Subsection \ref{gengroup1}, the pair
$(W,W')$ is exchangeable and all $\chi^{\phi}(\alpha)$ are real.

The remaining results in this subsection are proved by minor modifications
of the arguments in Subsection \ref{gengroup1}.

\begin{lemma} \label{cmom1} \[ \ee(W'|W) = \left( \frac{\chi^{\tau}(\alpha)}
{dim(\tau)} \right) W.\] \end{lemma}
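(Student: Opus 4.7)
The plan is to mimic the proof of Lemma \ref{mom1}, but now tracking both $\chi^{\tau}$ and its complex conjugate $\overline{\chi^{\tau}}$. First I would write $W = \frac{1}{\sqrt{2}}(\chi^{\tau}(g) + \chi^{\bar{\tau}}(g))$, where $\bar{\tau}$ denotes the contragredient (complex conjugate) representation, which is also irreducible and satisfies $\dim(\bar{\tau}) = \dim(\tau)$. Since $W' = W(\alpha g)$, conditioning on $g$ amounts to averaging over the uniform choice of $\alpha$ in its conjugacy class, which by the parametrization used in Lemma \ref{mom1} can be rewritten as an integral over $h \in G$ of $W(h\alpha h^{-1} g)$.

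Next I would apply Lemma \ref{dens1} twice, once to $\phi = \tau$ and once to $\phi = \bar{\tau}$, giving
\[
\ee(W'|g) = \frac{1}{\sqrt{2}} \left( \frac{\chi^{\tau}(\alpha)}{\dim(\tau)} \chi^{\tau}(g) + \frac{\chi^{\bar{\tau}}(\alpha)}{\dim(\bar{\tau})} \chi^{\bar{\tau}}(g) \right).
\]
To recombine these into a scalar multiple of $W$, I need the two character ratios above to coincide. This is where the self-inverse hypothesis on the conjugacy class of $\alpha$ enters: since $\chi^{\bar{\tau}}(\alpha) = \overline{\chi^{\tau}(\alpha)} = \chi^{\tau}(\alpha^{-1})$ and $\alpha$ is conjugate to $\alpha^{-1}$, one has $\chi^{\tau}(\alpha) \in \mathbb{R}$ and consequently $\chi^{\bar{\tau}}(\alpha)/\dim(\bar{\tau}) = \chi^{\tau}(\alpha)/\dim(\tau)$.

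Factoring out this common ratio then yields $\ee(W'|g) = \frac{\chi^{\tau}(\alpha)}{\dim(\tau)} W$. Since the right-hand side depends on $g$ only through $W$, the tower property gives $\ee(W'|W) = \frac{\chi^{\tau}(\alpha)}{\dim(\tau)} W$, as required. The computation is essentially routine; the only substantive point, and the one worth flagging carefully, is the reality of $\chi^{\tau}(\alpha)$ forced by the self-inverse conjugacy class, which is precisely what makes the two summands assemble back into $W$ rather than some other linear combination of $\chi^{\tau}(g)$ and $\chi^{\bar{\tau}}(g)$.
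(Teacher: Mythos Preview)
Your proposal is correct and is exactly the ``minor modification'' of the proof of Lemma~\ref{mom1} that the paper alludes to: apply Lemma~\ref{dens1} separately to $\tau$ and $\bar\tau$, then use the reality of $\chi^{\tau}(\alpha)$ (already noted in Subsection~\ref{cgengroup1}) to factor out the common character ratio. There is nothing to add.
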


\begin{lemma} \label{cmom2} \[ \ee(W'-W)^2 = 2 \left(1-  \frac{\chi^{\tau}
(\alpha)}{dim(\tau)} \right).\] \end{lemma}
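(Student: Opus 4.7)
The plan is to obtain Lemma \ref{cmom2} as an immediate corollary of the abstract identity Lemma \ref{var}, applied to the exchangeable pair $(W,W')$ just constructed in Subsection \ref{cgengroup1}. This mirrors exactly the one-line derivation of Lemma \ref{mom2} from Lemmas \ref{var} and \ref{mom1} in the real case, and the subsection's opening remark that the arguments are minor modifications of those in Subsection \ref{gengroup1} captures the situation.

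First I would verify the three hypotheses of Lemma \ref{var}: that $(W,W')$ is exchangeable, that $\ee(W^2) = 1$, and that $\ee(W'|W) = (1-a)W$ for some $a$. Exchangeability was recorded in the paragraph preceding Lemma \ref{cmom1}, using that the conjugacy class of $\alpha$ is self-inverse together with bi-invariance of Haar measure. The conditional expectation formula is exactly Lemma \ref{cmom1}, which identifies $1 - a = \chi^{\tau}(\alpha)/dim(\tau)$.

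The one point worth checking carefully is $\ee(W^2) = 1$, since $W$ now mixes $\chi^{\tau}$ with its complex conjugate. Expanding
\[ W^2 = \tfrac{1}{2}\bigl(\chi^{\tau}(g)^2 + 2|\chi^{\tau}(g)|^2 + \overline{\chi^{\tau}(g)}^2\bigr) \]
and integrating against Haar measure, the orthogonality relations give $\ee|\chi^{\tau}(g)|^2 = 1$, while $\ee \chi^{\tau}(g)^2$ and $\ee \overline{\chi^{\tau}(g)}^2$ both compute the multiplicity of the trivial representation in $\tau \otimes \tau$ and in its conjugate. Since $\chi^{\tau}$ is not real valued, $\tau$ is not isomorphic to its dual, so this multiplicity vanishes and $\ee(W^2) = \tfrac{1}{2}(0 + 2 + 0) = 1$.

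With the hypotheses in place, Lemma \ref{var} yields $\ee(W'-W)^2 = 2a = 2(1 - \chi^{\tau}(\alpha)/dim(\tau))$, which is the claim. There is no real obstacle; the only substantive ingredient beyond a mechanical invocation is the non-self-duality argument establishing $\ee(W^2) = 1$, which is essentially forced by the hypothesis that $\chi^{\tau}$ is not real valued.
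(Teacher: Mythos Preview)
Your proposal is correct and follows exactly the paper's approach: Lemma \ref{cmom2} is the complex-case analogue of Lemma \ref{mom2}, whose proof reads ``This is immediate from Lemmas \ref{var} and \ref{mom1},'' and the paper explicitly says the complex-case results are proved by minor modifications of the real-case arguments. Your additional verification that $\ee(W^2)=1$ via non-self-duality of $\tau$ spells out what the paper asserts without detail in the opening paragraph of Subsection \ref{cgengroup1}.
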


\begin{lemma} \[ \ee[(W')^2|g]  = \frac{1}{2} \sum_{\phi}
m_{\phi}[(\tau+\overline{\tau})^2] \frac{\chi^{\phi}(\alpha)}{dim(\phi)}
\chi^{\phi}(g),\] where the sum is over all irreducible representations of
$G$.
\end{lemma}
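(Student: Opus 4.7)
The plan is to mimic the proof of Lemma \ref{cond2} verbatim, with the single representation $\tau$ of the real case replaced by the (in general reducible) representation $\tau \oplus \overline{\tau}$, whose character equals $\chi^{\tau} + \overline{\chi^{\tau}}$. The normalization $W = \tfrac{1}{\sqrt 2}(\chi^{\tau}(g) + \overline{\chi^{\tau}(g)})$ will then account for the factor $\tfrac{1}{2}$ in front of the sum.

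Concretely, writing $g' = h \alpha h^{-1} g$, I would begin with the identity
\[ (W')^2 \;=\; \tfrac{1}{2}\bigl(\chi^{\tau}(g') + \overline{\chi^{\tau}(g')}\bigr)^2 \;=\; \tfrac{1}{2}\,\chi^{(\tau \oplus \overline{\tau})^{\otimes 2}}(g'). \]
Decomposing the tensor square into irreducibles gives
\[ (W')^2 \;=\; \tfrac{1}{2}\sum_{\phi} m_{\phi}\bigl[(\tau+\overline{\tau})^2\bigr]\,\chi^{\phi}(g'). \]
Now, conditioning on $g$ and averaging over the uniform choice of $\alpha$ in its (self-inverse) conjugacy class amounts to integrating $\chi^{\phi}(h \alpha h^{-1} g)$ over $h \in G$ with respect to Haar measure. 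Applying Lemma \ref{dens1} term by term,
\[ \ee\bigl[\chi^{\phi}(g') \,\big|\, g\bigr] \;=\; \int_{G} \chi^{\phi}(h\alpha h^{-1}g)\,dh \;=\; \frac{\chi^{\phi}(\alpha)}{\dim(\phi)}\,\chi^{\phi}(g), \]
and summing gives the claimed formula.

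There is no real obstacle: the self-inverse hypothesis ensures all $\chi^{\phi}(\alpha)$ are real, so the resulting expression is manifestly real as required (both sides being equal to $\ee[(W')^2|g]$, which is real since $W'$ is). The only step one should be slightly careful about is recognizing that, although $\tau \oplus \overline{\tau}$ is reducible, Lemma \ref{dens1} applies to each irreducible constituent $\phi$ of its tensor square; this is automatic because the decomposition of $(\tau+\overline{\tau})^2$ into irreducibles is a sum over all of $\widehat{G}$ with multiplicities $m_{\phi}[(\tau+\overline{\tau})^2]$, which is exactly the index set appearing in the conclusion.
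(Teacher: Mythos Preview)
Your proof is correct and is exactly the approach the paper has in mind: the paper does not spell out a separate proof but simply says the result follows by ``minor modifications of the arguments in Subsection \ref{gengroup1}'', i.e.\ by repeating the proof of Lemma \ref{cond2} with $\tau$ replaced by $\tau\oplus\overline{\tau}$ and picking up the factor $\tfrac{1}{2}$ from the normalization of $W$.
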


\begin{lemma} \label{ccondvar} \[ Var([\ee(W'-W)^2|g]) = \frac{1}{4}
 \sum^{\ \ *}_{\phi} m_{\phi}[(\tau+\overline{\tau})^2]^2 \left(
 1 + \frac{\chi^{\phi}(\alpha)}{dim(\phi)}- \frac{2
 \chi^{\tau}(\alpha)}{dim(\tau)} \right)^2, \] where the star signifies
 that the sum is over all nontrivial irreducible representations of $G$.
\end{lemma}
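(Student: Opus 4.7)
The plan is to mirror the proof of Lemma \ref{condvar} almost verbatim, with the main bookkeeping issue being that $W$ is now proportional to the character of the reducible representation $\tau \oplus \overline{\tau}$ rather than to a single irreducible character.

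First I would observe that $W = \frac{1}{\sqrt{2}}\chi^{\tau \oplus \overline{\tau}}(g)$, so $W^2 = \frac{1}{2}\sum_{\phi} m_{\phi}[(\tau+\overline{\tau})^2]\chi^{\phi}(g)$. Combining this with the preceding lemma (which gives $\ee[(W')^2|g]$ as a similar sum weighted by $\chi^{\phi}(\alpha)/\dim(\phi)$) and with Lemma \ref{cmom1}, one expands
\[
\ee[(W'-W)^2|g] = \ee[(W')^2|g] + \Bigl(1 - \tfrac{2\chi^{\tau}(\alpha)}{\dim(\tau)}\Bigr)W^2
\]
and gets
\[
\ee[(W'-W)^2|g] = \tfrac{1}{2}\sum_{\phi} m_{\phi}[(\tau+\overline{\tau})^2]\Bigl(1 + \tfrac{\chi^{\phi}(\alpha)}{\dim(\phi)} - \tfrac{2\chi^{\tau}(\alpha)}{\dim(\tau)}\Bigr)\chi^{\phi}(g).
\]

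Next I would apply the orthogonality relations for the irreducible characters of $G$ to compute
\[
\ee\bigl[\ee[(W'-W)^2|g]^2\bigr] = \tfrac{1}{4}\sum_{\phi} m_{\phi}[(\tau+\overline{\tau})^2]^2\Bigl(1 + \tfrac{\chi^{\phi}(\alpha)}{\dim(\phi)} - \tfrac{2\chi^{\tau}(\alpha)}{\dim(\tau)}\Bigr)^2,
\]
and then subtract $(\ee(W'-W)^2)^2 = 4(1-\chi^{\tau}(\alpha)/\dim(\tau))^2$ obtained from Lemma \ref{cmom2}.

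The step requiring a touch of care, and the one that will be the main obstacle, is verifying that subtracting $(\ee(W'-W)^2)^2$ exactly removes the trivial representation's contribution from the sum so that the remaining sum is starred. Because $\chi^{\tau}$ is not real valued, $\tau$ is not self-dual, so $\int_G \chi^{\tau}(g)^2\,dg = 0$ while $\int_G |\chi^{\tau}(g)|^2\,dg = 1$; expanding $(\tau+\overline{\tau})^2 = \tau^2 + 2\tau\overline{\tau} + \overline{\tau}^2$ then shows that the multiplicity of the trivial representation in $(\tau+\overline{\tau})^2$ is $2$. Thus the trivial term in the second moment is $\tfrac{1}{4}\cdot 4\cdot(2 - 2\chi^{\tau}(\alpha)/\dim(\tau))^2 = 4(1-\chi^{\tau}(\alpha)/\dim(\tau))^2$, which matches $(\ee(W'-W)^2)^2$ exactly. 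Removing this term yields the starred sum in the statement, completing the proof.
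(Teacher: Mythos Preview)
Your proposal is correct and follows exactly the approach the paper intends: it mirrors the proof of Lemma~\ref{condvar}, using the preceding lemma for $\ee[(W')^2|g]$ and Lemma~\ref{cmom1}, and the only new ingredient---which you handle correctly---is that the trivial representation occurs with multiplicity $2$ (not $1$) in $(\tau+\overline{\tau})^2$ because $\tau\not\cong\overline{\tau}$, so its contribution $\tfrac14\cdot 2^2\cdot(2-2\chi^{\tau}(\alpha)/\dim(\tau))^2$ matches $(\ee(W'-W)^2)^2$ from Lemma~\ref{cmom2}.
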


\begin{lemma} \label{chighmom} Let $k$ be a positive integer.
\begin{enumerate}
\item $\ee(W'-W)^k$ is equal to \[ \frac{1}{2^{k/2}} \sum_{r=0}^k (-1)^{k-r} {k
\choose r} \sum_{\phi} m_{\phi}[(\tau+\overline{\tau})^r]
m_{\phi}[(\tau+\overline{\tau})^{k-r}]
\frac{\chi^{\phi}(\alpha)}{dim(\phi)}.\]

\item $\ee(W'-W)^4 = \sum_{\phi}
m_{\phi}[(\tau+\overline{\tau})^2]^2 \left[ 2 \left(
1-\frac{\chi^{\tau}(\alpha)}{dim(\tau)} \right) -\frac{3}{2} \left(1 -
\frac{\chi^{\phi}(\alpha)}{dim(\phi)} \right) \right]$.
\end{enumerate}
\end{lemma}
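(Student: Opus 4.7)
The plan is to adapt the real-case arguments (Lemmas \ref{cond2} and \ref{highmom}) using the observation that $\sqrt{2}\, W = \chi^{\sigma}(g)$ where $\sigma := \tau+\overline{\tau}$ is a \emph{self-dual} representation with real-valued character. Self-duality of $\sigma$ (and hence of every power $\sigma^m$) yields two facts used repeatedly: $m_{\overline{\phi}}(\sigma^m) = m_\phi(\sigma^m)$ for every irreducible $\phi$, and $\chi^{\sigma^m}$ is real. Together with the reality of $\chi^\phi(\alpha)$ (which holds because $\alpha$ lies in a self-inverse conjugacy class), these are the only additional ingredients needed beyond what the real case uses.

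For part (1), expand $(W')^r$ as $2^{-r/2} \sum_\phi m_\phi(\sigma^r)\, \chi^\phi(\alpha g)$ and apply Lemma \ref{dens1}, after averaging $\alpha$ over its conjugacy class, to obtain
\[
 \ee[(W')^r \mid g] = 2^{-r/2} \sum_\phi m_\phi(\sigma^r)\, \frac{\chi^\phi(\alpha)}{dim(\phi)}\, \chi^\phi(g).
\]
Expand $W(g)^{k-r}$ analogously and insert both into the binomial expansion of $(W'-W)^k$. Integrating over $g$ produces inner products $\int_G \chi^\phi(g)\, \chi^\psi(g)\, dg = \delta_{\psi,\, \overline{\phi}}$; self-duality then rewrites $m_{\overline{\phi}}(\sigma^{k-r})$ as $m_\phi(\sigma^{k-r})$, collapsing the double sum over $\phi,\psi$ to a single sum and producing formula~(1).

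For part (2), take $k = 4$ in~(1) and subtract the (vanishing) value at $\alpha = e$, which produces the factor $1 - \chi^\phi(\alpha)/dim(\phi)$ in each summand. The $r = 0$ and $r = 4$ terms contribute $0$: only the trivial representation appears, and the factor vanishes there. Since $m_\phi(\sigma) = \delta_{\phi, \tau} + \delta_{\phi, \overline{\tau}}$ and $\chi^\tau(\alpha) = \chi^{\overline{\tau}}(\alpha)$, the $r = 1$ and $r = 3$ contributions each collapse to $2\, m_\tau(\sigma^3)\bigl(1 - \chi^\tau(\alpha)/dim(\tau)\bigr)$.

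The key step---and the main obstacle---is the identity
\[
 m_\tau(\sigma^3) = \tfrac{1}{2} \sum_\phi m_\phi(\sigma^2)^2,
\]
the analog of $m_\tau(\tau^3) = \sum_\phi m_\phi(\tau^2)^2$ from the real case. Self-duality gives $m_{\overline{\tau}}(\sigma^3) = m_\tau(\sigma^3)$, hence $2\, m_\tau(\sigma^3) = m_\sigma(\sigma^3) = [\sigma^3, \sigma] = [\sigma^2, \sigma^2]$; orthogonality of characters, together with reality of $\chi^{\sigma^2}$, evaluates this last inner product as $\sum_\phi m_\phi(\sigma^2)^2$. Combining the $r = 1, 2, 3$ contributions with their binomial coefficients and the overall prefactor $1/2^{k/2} = 1/4$ yields formula~(2).
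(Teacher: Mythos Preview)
Your proof is correct and follows essentially the same approach the paper indicates: a direct adaptation of the real-case argument of Lemma~\ref{highmom}, with the self-dual representation $\sigma=\tau+\overline{\tau}$ playing the role that $\tau$ plays there. Your explicit use of self-duality (to convert $m_{\overline{\phi}}(\sigma^{k-r})$ into $m_{\phi}(\sigma^{k-r})$ after orthogonality, and to obtain $2\,m_\tau(\sigma^3)=[\sigma^2,\sigma^2]=\sum_\phi m_\phi(\sigma^2)^2$) is exactly the ``minor modification'' required, and the bookkeeping for the $r=0,\dots,4$ terms checks out to give formula~(2).
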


Finally, one obtains the following central limit theorem.

\begin{theorem}
 \label{CLTcomplexgroup} Let $G$ be a compact Lie group and $\tau$
an irreducible representation of $G$ whose character is not real valued.
Let $\alpha \neq 1$ be such that $\alpha$ and $\alpha^{-1}$ are conjugate.
Let $W=\frac{1}{\sqrt{2}} \left( \chi^{\tau}(g) +
\overline{\chi^{\tau}(g)} \right)$ where $g$ is chosen from the Haar
measure of $G$. Then for all real $x_0$,
\begin{eqnarray*} & & \left| \pp(W \leq x_0) - \frac{1}{\sqrt{2 \pi}}
\int_{-\infty}^{x_0} e^{-\frac{x^2}{2}} dx \right| \\ & \leq & \frac{1}{2}
\sqrt{\sum_{\phi}^{\ \ *} m_{\phi}[(\tau+\overline{\tau})^2]^2 \left[ 2-
\frac{1}{a} \left( 1-\frac{\chi^{\phi}(\alpha)}{dim(\phi)} \right)
\right]^2}\\
& & + \left[ \frac{1}{\pi} \sum_{\phi}
m_{\phi}[(\tau+\overline{\tau})^2]^2 \left(2 - \frac{3}{2a}
\left(1-\frac{\chi^{\phi}(\alpha)}{dim(\phi)} \right) \right)
\right]^{1/4}. \end{eqnarray*} Here
$a=1-\frac{\chi^{\tau}(\alpha)}{dim(\tau)}$, the first sum is over all
non-trivial irreducible representations of $G$, and the second sum is over
all irreducible representations of $G$. \end{theorem}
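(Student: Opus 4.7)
The plan is to transcribe the proof of Theorem \ref{CLTrealgroup}, substituting the complex-case lemmas where appropriate and applying Theorem \ref{steinbound} to the exchangeable pair $(W,W')$ built in this subsection. Exchangeability was noted above (the conjugacy class of $\alpha$ is self-inverse), and Lemma \ref{cmom1} gives the linearity condition $\ee(W'|W)=(1-a)W$ with $a=1-\chi^{\tau}(\alpha)/dim(\tau)$. One then bounds separately the two error terms produced by Theorem \ref{steinbound}.

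For the first error term $\sqrt{Var(\ee[(W'-W)^2|W])}/a$, I would use Lemma \ref{majorize} to replace conditioning on $W$ by conditioning on $g$, then apply Lemma \ref{ccondvar} to rewrite the variance as
\[ \frac{1}{4}\sum_{\phi}^{\ \ *} m_{\phi}[(\tau+\overline{\tau})^2]^2 \left(1+\frac{\chi^{\phi}(\alpha)}{dim(\phi)}-\frac{2\chi^{\tau}(\alpha)}{dim(\tau)}\right)^2. \]
The only real manipulation is to substitute $2\chi^{\tau}(\alpha)/dim(\tau)=2(1-a)$, so that each summand becomes $\bigl(2a-(1-\chi^{\phi}(\alpha)/dim(\phi))\bigr)^2$; dividing by $a$ inside the square root and pulling out $\sqrt{1/4}=1/2$ in front then gives exactly the first term in the stated bound.

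For the second error term $(2\pi)^{-1/4}\sqrt{(1/a)\ee|W'-W|^3}$, I would bound the third absolute moment by the Cauchy-Schwartz inequality $\ee|W'-W|^3\leq\sqrt{\ee(W'-W)^2\,\ee(W'-W)^4}$. Lemma \ref{cmom2} supplies $\ee(W'-W)^2=2a$, and part (2) of Lemma \ref{chighmom} supplies $\ee(W'-W)^4=\sum_{\phi} m_{\phi}[(\tau+\overline{\tau})^2]^2\bigl(2a-\tfrac{3}{2}(1-\chi^{\phi}(\alpha)/dim(\phi))\bigr)$. Plugging in, the factor of $\sqrt{2a}$ coming from the second moment cancels against one power of $\sqrt{a}$ in the denominator, and the leftover constants combine as $(2\pi)^{-1/4}\cdot 2^{1/4}=\pi^{-1/4}$, producing the second term as stated.

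This is a parallel transcription of Theorem \ref{CLTrealgroup}; the extra factor $1/\sqrt{2}$ in the definition of $W$ is responsible for the $1/2^{k/2}$ appearing in Lemma \ref{chighmom}, which in turn is why the numerical constants in Theorem \ref{CLTcomplexgroup} ($1/2$ in front of the first term, $2$ and $3/2$ inside the fourth root of the second term) differ from those in the real case. I do not anticipate any substantive obstacle; the only care needed is correct bookkeeping of the $1/2$, $1/4$, and $\sqrt{2a}$ factors when propagating them through Theorem \ref{steinbound}.
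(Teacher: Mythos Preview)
Your proposal is correct and follows precisely the paper's own approach: apply Theorem \ref{steinbound} to the exchangeable pair, bound the first term via Lemmas \ref{majorize} and \ref{ccondvar}, and bound the second via Cauchy--Schwartz together with Lemmas \ref{cmom2} and \ref{chighmom}(2). The constant bookkeeping you outline (pulling $1/2$ out of the square root, and the cancellation yielding $\pi^{-1/4}$ in the second term) is exactly right.
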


\subsection{Example: $U(n,\cc)$} \label{U}

    Every element of $U(n,\cc)$ is conjugate to a diagonal
    matrix with entries $(x_1,\cdots,x_n)$ and the representation
    theory of $U(n,\cc)$ is well understood (see for instance
    \cite{Bu}). The irreducible representations of
    $U(n,\cc)$ are parameterized by integer sequences $\lambda_1
    \geq \lambda_2 \geq \cdots \geq \lambda_n$. The corresponding character
    value on an element of type $\{x_1,\cdots,x_n\}$ is given by the Schur function
    $s_{\lambda}(x_1,\cdots,x_n)$. (The usual definition of Schur functions
    requires that $\lambda_n \geq 0$, so if $\lambda_n=-k<0$, this should be
    interpreted as $(x_1 \cdots x_n)^{-k} s_{\lambda+(k)^n}$, where
    $\lambda+(k)^n$ is given by adding $k$ to each of
    $\lambda_1,\cdots,\lambda_n$). The complex conjugate
    of a character with data $\lambda_1 \geq \lambda_2 \geq \cdots \geq \lambda_n$
    has data $-\lambda_n \geq -\lambda_{n-1} \geq \cdots \geq -\lambda_1$.

    Combining the above information with Theorem
    \ref{CLTcomplexgroup}, one obtains the following result.

\begin{theorem} \label{Uthm} Let $g$ be chosen from the Haar measure of
$U(n,\cc)$, where $n \geq 2$. Let $W(g) = \frac{1}{\sqrt{2}} [Tr(g) +
\overline{Tr(g)}]$ where $Tr$ denotes trace. Then for all real $x_0$, \[
\left| \pp(W \leq x_0) - \frac{1}{\sqrt{2 \pi}} \int_{-\infty}^{x_0}
e^{-\frac{x^2}{2}} dx \right| \leq \frac{2}{n-1}.\] \end{theorem}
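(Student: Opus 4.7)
The plan is to apply Theorem \ref{CLTcomplexgroup} with $\tau$ the defining representation of $U(n,\cc)$ and with $\alpha$ chosen as the diagonal matrix $\mathrm{diag}(e^{i\theta}, e^{-i\theta}, 1, \dots, 1)$. This $\alpha$ is conjugate to $\alpha^{-1}$ in $U(n,\cc)$ because the two matrices have identical multisets of eigenvalues. A direct computation of the character of the defining representation gives $\chi^\tau(\alpha) = n-2+2\cos\theta$, so that $a = 2(1-\cos\theta)/n$, which tends to $0$ as $\theta \to 0$. As in the examples of Subsections \ref{Sp}, \ref{SO}, \ref{O}, the strategy is to obtain a bound valid for every small $\theta$, then pass to the limit $\theta \to 0$.

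The second ingredient is the decomposition of $(\tau+\overline{\tau})^2 = \tau^2 + 2\,\tau\overline{\tau} + \overline{\tau}^2$ into irreducibles. Standard facts from the representation theory of $U(n,\cc)$ (see e.g.\ \cite{Bu}) give $\tau^2 = \mathrm{Sym}^2 \oplus \wedge^2$, with highest weights $(2,0,\dots,0)$ and $(1,1,0,\dots,0)$; $\overline{\tau}^2$ is the complex conjugate decomposition, with highest weights $(0,\dots,0,-2)$ and $(0,\dots,0,-1,-1)$; and $\tau \otimes \overline{\tau} = \mathrm{triv} \oplus \mathrm{adj}$, where $\mathrm{adj}$ has highest weight $(1,0,\dots,0,-1)$ and dimension $n^2-1$. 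Thus $(\tau+\overline{\tau})^2$ has six distinct irreducible constituents: the trivial and the adjoint each with multiplicity $2$, and $\mathrm{Sym}^2,\wedge^2,\overline{\mathrm{Sym}^2},\overline{\wedge^2}$ each with multiplicity $1$.

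Next I would evaluate each character ratio $\chi^\phi(\alpha)/\dim(\phi)$ using Schur-function formulas, and expand to leading order as $\theta \to 0$. Writing $1 - \cos\theta \sim \theta^2/2$, I would compute the limits of $(1/a)(1-\chi^\phi(\alpha)/\dim(\phi))$, obtaining $2n^2/(n^2-1)$ for the adjoint, $2(n+2)/(n+1)$ for $\mathrm{Sym}^2$ (and its conjugate), and $2(n-2)/(n-1)$ for $\wedge^2$ (and its conjugate). Substituting these values into the first bracketed sum in Theorem \ref{CLTcomplexgroup} and collecting terms gives
\[
\sum_\phi{}^{*} m_\phi\!\bigl[(\tau+\overline{\tau})^2\bigr]^2 \Bigl[2 - \tfrac{1}{a}\bigl(1 - \tfrac{\chi^\phi(\alpha)}{\dim(\phi)}\bigr)\Bigr]^2 \;\longrightarrow\; \tfrac{16(n^2+2)}{(n^2-1)^2},
\]
so that the first error term tends to $2\sqrt{n^2+2}/(n^2-1)$, which is bounded above by $2/(n-1)$ since $n^2 + 2 \leq (n+1)^2$ for $n \geq 1$.

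The main obstacle is handling the second error term. Even though $a \to 0$, the quantity $\ee(W'-W)^4/a$ inside the fourth-root must stay bounded (in fact go to zero) in the limit. This is where the precise form of Lemma \ref{chighmom}(2) pays off: after plugging in the computed limiting ratios and multiplicities, the expression $2 \sum m_\phi^2 - (3/(2a)) \sum m_\phi^2 (1-\chi^\phi(\alpha)/\dim(\phi))$ simplifies, when brought to the common denominator $n^2-1$, to a polynomial identically zero in $n$. Thus the second error term can be made arbitrarily small by taking $\theta \to 0$, and the final bound $2/(n-1)$ is determined entirely by the first error term. The secondary bookkeeping task is to verify that each of the limits computed above agrees with the direct symmetric-function evaluation; this amounts to applying the identities $s_{(2)} = (e_1^2+p_2)/2$, $s_{(1,1)} = (e_1^2-p_2)/2$, and $\chi^{\mathrm{adj}} = |\chi^\tau|^2 - 1$ on the chosen $\alpha$, then expanding in $\theta$.
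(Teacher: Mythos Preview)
Your approach is correct and essentially identical to the paper's: same choice of $\alpha$, same decomposition of $(\tau+\overline\tau)^2$ (your $\mathrm{Sym}^2,\wedge^2,\mathrm{adj}$ are the paper's $s_{(2)},s_{(1,1)},s_{(1,0^{n-2},-1)}$), same limiting first error term $\tfrac{2\sqrt{n^2+2}}{n^2-1}\le\tfrac{2}{n-1}$, and same conclusion that the second error term vanishes as $\theta\to 0$. The only cosmetic difference is that the paper records the explicit second error term $\bigl[\tfrac{12(2n-1)(1-\cos\theta)}{\pi(n^2-1)}\bigr]^{1/4}$ before letting $\theta\to 0$, whereas you argue directly that the limiting value of $\ee(W'-W)^4/a$ is $24-24=0$.
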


\begin{proof} One applies Theorem \ref{CLTcomplexgroup} with $\tau$ the
$n$-dimensional defining representation. We take $\alpha$ to be an element
of type
 $\{x_1,\cdots,x_n \}$ with $x_1=\cdots=x_{n-2}=1$, $x_{n-1}=e^{i
 \theta}$, and $x_n=e^{- i \theta}$. Then $\alpha$ and $\alpha^{-1}$ are
 conjugate and $a=\frac{2(1-cos(\theta))}{n}$. By the Pieri rule for multiplying Schur
 functions (page 73 of \cite{Mac}), the decomposition of the character of
 $(\tau+\overline{\tau})^2$ in terms of Schur functions is given by
\begin{eqnarray*} (s_{(1)}+\overline{s_{(1)}})^2 & = & s_{(1)} s_{(1)} + 2
\frac{s_{(1)} s_{(1^{n-1})}}{x_1 \cdots x_n} + \overline{s_{(1)} s_{(1)}}
\\ & = & s_{(2)} + s_{(1,1)} + 2 \left[
\frac{s_{(1^n)}+s_{(2,1^{n-2})}} {x_1 \cdots x_n} \right] + s_{(-2)} +
s_{(-1,-1)}\\
& = & 2 + s_{(2)} + s_{(1,1)} + s_{(-2)} + s_{(-1,-1)} +
2s_{(1,0^{n-2},-1)}.\end{eqnarray*} One then computes that in the $\theta
\rightarrow 0$ limit, the first error
 term in Theorem \ref{CLTcomplexgroup} is equal to $\frac{2
 \sqrt{n^2+2}}{n^2-1} \leq \frac{2}{n-1}$. One computes that the
 second error term is equal to $
 \left[\frac{12(2n-1)(1-cos(\theta))}{\pi(n^2-1)} \right]^{1/4}$. The
 proof is completed by noting that this approaches 0 as $\theta
 \rightarrow 0$. \end{proof}

\section{Compact Symmetric Spaces} \label{sspace}

This section extends the methods of Section \ref{group} to study the
distribution of a fixed spherical function $\omega_{\tau}$ on a random
element of a compact symmetric space $G/K$. Subsection \ref{genss} gives
general theory for the case that $\omega_{\tau}$ is real valued. This is
illustrated for the sphere in Subsection \ref{sphere}, giving a different
perspective on a result of \cite{DF} and \cite{Me1}. We note that since
compact Lie groups can be viewed as symmetric spaces (see Section
\ref{genss}), the examples in Subsections \ref{Sp}, \ref{SO}, and \ref{O}
give further examples. Our theorems should also prove useful for
Jacobi-type ensembles arising from other root systems (see for instance
\cite{Vr}).

Subsection \ref{cgenss} indicates the changes needed to treat spherical
functions $\omega_{\tau}$ which are not real valued, and Subsections
\ref{circorth} and \ref{circsym} study the trace of elements from Dyson's
circular orthogonal and circular symplectic ensembles as special cases
(the circular unitary ensemble is equivalent to $U(n,\cc)$, so was already
treated in Subsection \ref{U}). Central limit theorems are known for the
trace of an element from the circular ensembles (see pages 48-9 of
\cite{Ra} and \cite{BF}), but our approach gives an error term.

\subsection{General theory (real case)} \label{genss}

To begin we recall some concepts about spherical functions of symmetric
spaces. Standard references which contain more details are \cite{He1},
\cite{He2}, \cite{Te}, and \cite{V}. Chapter 7 of \cite{Mac} is also very
helpful.

A Riemannian manifold $X$ is said to be a symmetric space if the geodesic
symmetry $\sigma:X \mapsto X$ with center at any point $x_0$ is an
isometry. Then $X$ can be identified with $G/K$, where $G$ is a connected
transitive Lie group of isometries of $X$, and $K$ is a compact group
which up to finite index is given by $K= \{ g \in G: gx_0=x_0 \}$.

A function $\omega_{\phi} \in L^2(G/K)$ is called spherical if
$\omega_{\phi}(1)=1$ and the following functional equation is satisfied:
\[ \int_K \omega_{\phi}(xky) dk = \omega_{\phi}(x) \omega_{\phi}(y)
 \ \forall x,y \in G.\] This equation implies that $\omega_{\phi}$ is $K$
 bi-invariant (i.e. $\omega_{\phi}(k_1 g k_2) = \omega_{\phi}(g)$ for all $k_1,k_2$
in $K$), which justifies our writing $\omega_{\phi}(g)$ instead of
$\omega_{\phi}(gK)$.

One reason that spherical functions are important is that if $G/K$ is
compact and $H_{\phi}$ is the $G$-invariant subspace of $L^2(G/K)$
generated by $\omega_{\phi}$, then $H_{\phi}$ is a finite dimensional
irreducible representation of $G$ and $L^2(G/K)$ is a direct sum of all
such $H_{\phi}$. We let $dim(\phi)$ denote the dimension of $H_{\phi}$.

In reading this section it is useful to keep in mind that a compact Lie
group $U$ can be viewed as a compact symmetric space. Indeed, one can take
$G=U \times U$ and $K$ the diagonal subgroup of $U$; then $G/K$ is
identified with $U$ under the mapping $(u_1,u_2)K \mapsto u_1u_2^{-1}$.
The spherical functions $\omega_{\phi}$ of $G/K$ are indexed by
irreducible representations $\phi$ of $U$ and are precisely the character
ratios $\frac{\chi^{\phi}(u)}{\chi^{\phi}(1)}$; moreover,
$dim(H_{\phi})=\chi^{\phi}(1)^2$.

Let $\omega_{\tau}$ be a non-trivial real valued spherical function of
$G/K$. We are interested in the distribution of $\omega_{\tau}(g)$
(normalized to have variance $1$). Here $gK$ is chosen from the ``Haar
measure'' $\mu$ on $G/K$, i.e. the unique $G$-invariant measure on $G/K$
which satisfies
\[ \int_G f(g) Haar(dg) = \int_{G/K} \left( \int_K f(gk) Haar(dk) \right)
\mu(dgK).\]

The following orthogonality relation will be used; see for instance page
45 of \cite{Kl} for a proof.

\begin{lemma} \label{orthogss}
\[ \int_{G/K} \omega_{\phi}(g) \overline{\omega_{\eta}(g)} =
\frac{\delta_{\phi,\eta}} {dim(\phi)}.\]
\end{lemma}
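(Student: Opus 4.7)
The plan is to realize $\omega_\phi$ as a diagonal matrix coefficient of the irreducible unitary representation $H_\phi$ with respect to a $K$-fixed unit vector, and then deduce the orthogonality from Schur's classical relations for compact groups.

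First, I would reduce the integral over $G/K$ to one over $G$. Since $\omega_\phi$ is $K$-bi-invariant the integrand $\omega_\phi(g)\overline{\omega_\eta(g)}$ descends to $G/K$, and by the relation between Haar measure on $G$ and $\mu$ quoted just before the lemma,
\[\int_{G/K}\omega_\phi(g)\overline{\omega_\eta(g)}\,d\mu(gK)=\int_G\omega_\phi(g)\overline{\omega_\eta(g)}\,dg.\]

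Second, I would produce the matrix-coefficient identity. The subspace $H_\phi\subset L^2(G/K)$ is $G$-irreducible, carries the $G$-invariant $L^2$ inner product, and contains $\omega_\phi$ as a $K$-invariant vector. For a compact symmetric pair $(G,K)$ the subspace of $K$-fixed vectors in each irreducible $H_\phi$ appearing in $L^2(G/K)$ is one-dimensional (the Gelfand pair property), and here it is spanned by $\omega_\phi$. Set $v_\phi=\omega_\phi/\|\omega_\phi\|_{L^2}$ and define $F(g)=\langle\pi_\phi(g)v_\phi,v_\phi\rangle$. Then $F$ is continuous, $K$-bi-invariant, and satisfies $F(1)=1$; inserting the rank-one projection $P=\int_K\pi_\phi(k)\,dk$ onto the $K$-fixed line verifies
\[\int_K F(xky)\,dk=F(x)F(y),\]
so $F$ is spherical for $\phi$, and by uniqueness of the spherical function attached to $\phi$ one concludes $F=\omega_\phi$.

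Finally, I would invoke Schur orthogonality: for irreducible unitary representations $\pi,\pi'$ of a compact group $G$ with Haar probability measure and for vectors $u,v,u',v'$,
\[\int_G\langle\pi(g)u,v\rangle\,\overline{\langle\pi'(g)u',v'\rangle}\,dg=\frac{\delta_{\pi,\pi'}}{\dim V_\pi}\,\langle u,u'\rangle\,\overline{\langle v,v'\rangle}.\]
Applying this with $u=v=v_\phi$ and $u'=v'=v_\eta$ and using $\|v_\phi\|=\|v_\eta\|=1$ collapses the right-hand side to $\delta_{\phi,\eta}/\dim(\phi)$, which is exactly the lemma.

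The main obstacle is the second step: identifying $\omega_\phi$ with a matrix coefficient requires the one-dimensionality of the $K$-fixed subspace in each $H_\phi$, which is the Gelfand pair property of the compact symmetric pair $(G,K)$ and can be cited from \cite{He1} or \cite{He2}. Everything else is a routine manipulation of Haar measures together with Schur's matrix-coefficient identity.
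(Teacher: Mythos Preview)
Your argument is correct and is the standard proof of this orthogonality relation. Note, however, that the paper does not actually give its own proof of this lemma: it simply states the result and refers to page 45 of \cite{Kl}. So there is no in-paper argument to compare against. Your three-step outline (reduce to an integral over $G$, realize $\omega_\phi$ as the diagonal matrix coefficient $\langle \pi_\phi(g)v_\phi,v_\phi\rangle$ with respect to the unique $K$-fixed unit vector, then apply Schur orthogonality) is exactly the classical route and is presumably what one finds in the cited reference.
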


In particular, Lemma \ref{orthogss} implies that $W:= [dim(\tau)]^{1/2}
\omega_{\tau}$ has mean 0 and variance 1.

The following lemma is immediate from the functional equation for
$\omega_{\phi}$ and $K$ bi-invariance of $\omega_{\phi}$.

\begin{lemma} \label{ssdens} Let $G/K$ be a compact symmetric space,
and $\omega_{\phi}$ a spherical function of $G/K$. Then
\[ \int_{K \times K} \omega_{\phi}(k_1 \alpha k_2 g) dk_1 dk_2 =
\omega_{\phi}(\alpha) \omega_{\phi}(g) \] for all $\alpha,g \in G$.
\end{lemma}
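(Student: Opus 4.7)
The plan is to reduce the double integral to a single application of the functional equation for spherical functions, combined with left $K$-invariance. By Fubini, I would first write
\[
\int_{K \times K} \omega_{\phi}(k_1 \alpha k_2 g) \, dk_1 dk_2 = \int_K \left( \int_K \omega_{\phi}((k_1 \alpha) k_2 g) \, dk_2 \right) dk_1.
\]
The inner integral is of the form $\int_K \omega_{\phi}(x k_2 y)\, dk_2$ with $x = k_1 \alpha$ and $y = g$, so the defining functional equation gives $\int_K \omega_{\phi}(k_1 \alpha k_2 g)\, dk_2 = \omega_{\phi}(k_1 \alpha)\, \omega_{\phi}(g)$.

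Next I would apply left $K$-invariance of $\omega_{\phi}$ (a consequence of the functional equation, as noted in the text just before this lemma) to get $\omega_{\phi}(k_1 \alpha) = \omega_{\phi}(\alpha)$. Pulling the resulting constants out of the outer integral and using that $dk_1$ is a probability measure yields
\[
\int_K \omega_{\phi}(\alpha)\, \omega_{\phi}(g)\, dk_1 = \omega_{\phi}(\alpha)\, \omega_{\phi}(g),
\]
which is the desired identity.

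There is really no obstacle here: the lemma is a direct combination of the two properties already established for spherical functions, and the only minor point to mention is the use of Fubini, which is valid because $\omega_{\phi}$ is continuous on $G$ and $K$ is compact so the integrand is bounded. This lemma plays the same role in the symmetric space setting that Lemma \ref{dens1} plays in the compact Lie group setting of Subsection \ref{gengroup1}, and will be the key identity used to compute $\ee(W'|W)$ and higher conditional moments for the exchangeable pair built from translation by a random element of a fixed $K$-double coset.
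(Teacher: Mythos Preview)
Your proof is correct and is exactly the argument the paper has in mind: the paper merely states that the lemma ``is immediate from the functional equation for $\omega_{\phi}$ and $K$ bi-invariance of $\omega_{\phi}$,'' and you have simply unpacked those two ingredients via Fubini. There is no difference in approach.
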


We define the pair $(W,W')$ by letting $W=[dim(\tau)]^{1/2}
\omega_{\tau}(g)$ where $gK$ is from the ``Haar measure'' of $G/K$, and
$W'=W(\alpha g)$ where $\alpha$ is chosen uniformly at random from a fixed
double coset $K \alpha K \neq K$ which satisfies the property that $K
\alpha K = K \alpha^{-1}K$. Since $K \alpha^{-1} K = (K \alpha K)^{-1}$,
it follows that $(W,W')$ is exchangeable. Moreover the integral formula
for spherical functions (page 417 of \cite{He2}) implies that all
$\omega_{\phi}(\alpha)$ are real.

The analysis of the exchangeable pair $(W,W')$ can be carried out exactly
as in Subsection \ref{gengroup1}, using Lemmas \ref{orthogss} and
\ref{ssdens} instead of the orthogonality relations for compact Lie groups
and Lemma \ref{dens1}. Hence we simply record the results.

\begin{lemma} \label{ssmom1} \[ \ee(W'|W)= \omega_{\tau}(\alpha) W.\]
\end{lemma}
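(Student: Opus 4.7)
The plan is to mirror the proof of Lemma \ref{mom1}, using Lemma \ref{ssdens} in place of Lemma \ref{dens1}. Recall that $W = [dim(\tau)]^{1/2} \omega_{\tau}(g)$ and $W' = [dim(\tau)]^{1/2} \omega_{\tau}(\alpha g)$, where $\alpha$ is drawn uniformly from the fixed double coset $K\alpha K$. By the $K$-biinvariance of $\omega_{\tau}$, the value $\omega_{\tau}(\alpha)$ is constant on this double coset, so the right-hand side of the claim is well defined.

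The key observation is that the uniform law on $K\alpha K$ is the pushforward of the product of Haar measures on $K \times K$ under the map $(k_1,k_2) \mapsto k_1 \alpha k_2$; this is immediate from the left-and-right invariance of Haar measure on $K$. With this parametrization in hand,
\[ \ee(W'|g) = [dim(\tau)]^{1/2} \int_{K \times K} \omega_{\tau}(k_1 \alpha k_2 g)\, dk_1\, dk_2, \]
and applying Lemma \ref{ssdens} with $\phi=\tau$ collapses the double integral to $\omega_{\tau}(\alpha) \omega_{\tau}(g)$. Recognizing $[dim(\tau)]^{1/2} \omega_{\tau}(g) = W$ then yields $\ee(W'|g) = \omega_{\tau}(\alpha) W$. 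Since the right-hand side depends on $g$ only through $W$, the tower property of conditional expectation gives $\ee(W'|W) = \omega_{\tau}(\alpha) W$.

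I do not anticipate any real obstacle. The representation-theoretic content has already been distilled into Lemma \ref{ssdens}, and the only step beyond a direct computation is the pushforward description of the uniform measure on the double coset. The argument is thus a faithful transcription of the proof of Lemma \ref{mom1}, consistent with the author's own remark that the analysis can be carried out exactly as in Subsection \ref{gengroup1}.
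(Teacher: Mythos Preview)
Your argument is correct and is precisely the approach the paper intends: it explicitly says to carry out the analysis as in Subsection \ref{gengroup1}, replacing Lemma \ref{dens1} by Lemma \ref{ssdens}, and your proof is the faithful transcription of the proof of Lemma \ref{mom1} under that substitution.
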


\begin{lemma} \label{ssmom2} \[ \ee(W'-W)^2 = 2(1-\omega_{\tau}(\alpha)).
\]
\end{lemma}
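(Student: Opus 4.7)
The plan is to deduce Lemma \ref{ssmom2} as an immediate consequence of the general identity Lemma \ref{var}, exactly paralleling how Lemma \ref{mom2} followed from Lemma \ref{var} and Lemma \ref{mom1} in the compact Lie group case. The only work is to verify that the exchangeable pair $(W,W')$ constructed in this subsection fits into the hypotheses of Lemma \ref{var}, with the parameter $a$ identified as $1-\omega_{\tau}(\alpha)$.

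More concretely, first I would note that exchangeability of $(W,W')$ has already been established in the discussion preceding Lemma \ref{ssmom1} using the hypothesis $K\alpha K = K\alpha^{-1}K$. Second, I would check that $\ee(W^2)=1$; this follows from Lemma \ref{orthogss} applied with $\phi=\eta=\tau$, which gives $\int_{G/K}|\omega_\tau(g)|^2\,d\mu = 1/\dim(\tau)$, so that $W=[\dim(\tau)]^{1/2}\omega_\tau(g)$ indeed has second moment $1$ (and mean $0$ by orthogonality against the trivial spherical function). Third, Lemma \ref{ssmom1} gives
\[
\ee(W'|W) = \omega_\tau(\alpha)\,W = (1-a)\,W
\]
with $a:=1-\omega_\tau(\alpha)$.

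With these three ingredients in place, Lemma \ref{var} applies verbatim (its proof never uses any structure of $(W,W')$ beyond exchangeability, $\ee(W^2)=1$, and the linearity condition $\ee(W'|W)=(1-a)W$), and yields $\ee(W'-W)^2 = 2a = 2(1-\omega_\tau(\alpha))$, which is the desired conclusion.

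There is no real obstacle here: the content of the lemma is entirely absorbed into Lemma \ref{ssmom1} together with the general second-moment identity of Lemma \ref{var}. The only thing one needs to be careful about is the normalization factor $[\dim(\tau)]^{1/2}$ in the definition of $W$, which is precisely what makes $\ee(W^2)=1$ so that $2a$ (rather than $2a\cdot\ee(W^2)$) appears on the right-hand side.
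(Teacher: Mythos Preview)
Your proposal is correct and follows essentially the same approach as the paper: the paper simply records Lemma~\ref{ssmom2} without proof, noting that the analysis is identical to the compact Lie group case (where Lemma~\ref{mom2} is deduced immediately from Lemmas~\ref{var} and~\ref{mom1}), and your argument spells out exactly this derivation using Lemmas~\ref{var} and~\ref{ssmom1}.
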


In the statements of the remaining results, we define the ``multiplicity''
$m_{\phi}(\tau^r)$ by the expansion \[ [\omega_{\tau}(g)]^r = \sum_{\phi}
\left[ \frac{dim(\phi)}{dim(\tau)^r} \right]^{1/2} m_{\phi}(\tau^r)
\omega_{\phi}(g).\] The numbers $m_{\phi}(\tau^r)$ are real and
non-negative (argue as on page 396 of \cite{Mac} with sums replaced by
integrals) but need not be integers. Note that by Lemma \ref{orthogss}, \[
m_{\phi}(\tau^r) = \left[ dim(\phi) dim(\tau)^r \right]^{1/2} \int_{G/K}
\omega_{\tau}(g)^r \overline{\omega_{\phi}(g)}.\]

\begin{lemma} \label{sscond2} \[ \ee[(W')^2|gK] = \sum_{\phi}
m_{\phi}(\tau)^2 [dim(\phi)]^{1/2} \omega_{\phi}(\alpha) \omega_{\phi}(g),
\] where the sum is over all spherical functions of $G/K$.
\end{lemma}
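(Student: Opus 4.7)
The plan is to imitate the proof of Lemma \ref{cond2} in the group case, substituting the symmetric space tools. Setting $r=2$ in the defining expansion
\[
[\omega_{\tau}(g')]^2 = \sum_{\phi} \left[ \frac{dim(\phi)}{dim(\tau)^2} \right]^{1/2} m_{\phi}(\tau^2) \, \omega_{\phi}(g'),
\]
and multiplying through by $dim(\tau)$, I get
\[
(W')^2 = dim(\tau)\,\omega_{\tau}(g')^2 = \sum_{\phi} [dim(\phi)]^{1/2}\, m_{\phi}(\tau^2)\, \omega_{\phi}(g'),
\]
since $dim(\tau) \cdot [dim(\phi)/dim(\tau)^2]^{1/2} = [dim(\phi)]^{1/2}$. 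This is the analogue of the first step in Lemma \ref{cond2}, and it is the one place where the non-integer normalization built into the definition of $m_{\phi}(\tau^r)$ has to be tracked carefully; it is the only mildly tricky bookkeeping in the argument.

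Next, taking the conditional expectation given $gK$, the randomness in $g'$ comes entirely from the choice of $\alpha$ in the double coset $K\alpha K$, i.e.\ from independent uniform $k_1,k_2 \in K$ with $g' = k_1 \alpha k_2 g$. Applying Lemma \ref{ssdens} termwise gives
\[
\ee[\omega_{\phi}(g') \mid gK] = \int_{K \times K} \omega_{\phi}(k_1 \alpha k_2 g)\, dk_1\, dk_2 = \omega_{\phi}(\alpha)\, \omega_{\phi}(g).
\]
This is exactly the role played by Lemma \ref{dens1} in the proof of Lemma \ref{cond2}, and it is legitimate to exchange sum and expectation because the $\omega_{\phi}$-expansion of $\omega_{\tau}(g')^2$ is an $L^2$ expansion and all quantities are bounded by $K$-bi-invariance and $\|\omega_\phi\|_\infty \le 1$.

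Combining the two steps yields
\[
\ee[(W')^2 \mid gK] = \sum_{\phi} m_{\phi}(\tau^2)\, [dim(\phi)]^{1/2}\, \omega_{\phi}(\alpha)\, \omega_{\phi}(g),
\]
which is the claimed identity. The only potential obstacle is ensuring that the sum/integral interchange is valid and that the normalization factors combine as expected; no deep representation-theoretic input beyond Lemmas \ref{orthogss} and \ref{ssdens} is needed.
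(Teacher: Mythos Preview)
Your argument is correct and is precisely the approach the paper intends: the paper does not spell out a proof of Lemma~\ref{sscond2} but simply instructs the reader to rerun the proof of Lemma~\ref{cond2} with Lemmas~\ref{orthogss} and~\ref{ssdens} in place of the group-theoretic orthogonality relations and Lemma~\ref{dens1}, which is exactly what you do. Note that your derivation yields $m_{\phi}(\tau^2)$ rather than the $m_{\phi}(\tau)^2$ printed in the statement; this is a typo in the paper, as confirmed by the form of Lemma~\ref{sscondvar} immediately afterward.
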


\begin{lemma} \label{sscondvar} \[ Var([\ee(W'-W)^2|gK]) = \sum_{\phi}^{\
\ *} m_{\phi}(\tau^2)^2 \left( 1+\omega_{\phi}(\alpha)-2
\omega_{\tau}(\alpha) \right)^2,\] where the star signifies that the sum
is over all nontrivial spherical functions of $G/K$.
\end{lemma}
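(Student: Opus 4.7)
The plan is to mimic the proof of Lemma \ref{condvar} in the symmetric-space setting, substituting the spherical-function analogs (Lemmas \ref{orthogss}, \ref{ssmom1}, \ref{sscond2}) for the ingredients used in the group case. First I would expand $(W'-W)^2 = (W')^2 - 2WW' + W^2$ and condition on $gK$. By Lemma \ref{ssmom1}, $-2W\,\ee(W'|gK) = -2\omega_{\tau}(\alpha) W^2$, so
\[ \ee((W'-W)^2|gK) = \ee[(W')^2|gK] + (1 - 2\omega_{\tau}(\alpha))\, W^2.\]
Next I would use the defining expansion of $m_{\phi}(\tau^2)$ to rewrite $W^2 = dim(\tau)\,\omega_{\tau}(g)^2 = \sum_{\phi} [dim(\phi)]^{1/2} m_{\phi}(\tau^2)\,\omega_{\phi}(g)$, combine with Lemma \ref{sscond2}, and collect terms in $\omega_\phi(g)$ to get the single expansion
\[ \ee((W'-W)^2|gK) = \sum_{\phi} m_{\phi}(\tau^2)\,[dim(\phi)]^{1/2}\bigl(1+\omega_{\phi}(\alpha)-2\omega_{\tau}(\alpha)\bigr)\,\omega_{\phi}(g).\]

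Then I would square this expression and integrate over $G/K$. Lemma \ref{orthogss} kills the cross terms, and the factor $1/dim(\phi)$ it produces cancels exactly against the product $[dim(\phi)]^{1/2} \cdot [dim(\phi)]^{1/2}$, yielding
\[ \ee\bigl[\ee((W'-W)^2|gK)^2\bigr] = \sum_{\phi} m_{\phi}(\tau^2)^2\bigl(1+\omega_{\phi}(\alpha)-2\omega_{\tau}(\alpha)\bigr)^2. \]
Finally, $Var([\ee(W'-W)^2|gK])$ is obtained by subtracting $(\ee(W'-W)^2)^2$, which equals $4(1-\omega_{\tau}(\alpha))^2$ by Lemma \ref{ssmom2}. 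For the trivial spherical function $\phi_0$ one has $\omega_{\phi_0}(\alpha) = 1$, and integrating the defining expansion of $\omega_{\tau}(g)^2$ against the constant function $\omega_{\phi_0} \equiv 1$ and invoking Lemma \ref{orthogss} shows $m_{\phi_0}(\tau^2) = 1$; so the $\phi_0$ contribution to the sum is exactly $4(1-\omega_{\tau}(\alpha))^2$ and cancels. What remains is the sum over nontrivial $\phi$, which is the stated identity.

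The only real subtlety is the bookkeeping of normalizing factors: the rescaling $W = [dim(\tau)]^{1/2}\omega_{\tau}$, the $[dim(\phi)/dim(\tau)^r]^{1/2}$ convention in the definition of $m_{\phi}(\tau^r)$, and the $1/dim(\phi)$ from spherical-function orthogonality must all be tracked carefully so that the Fourier-type calculation collapses cleanly; everything else is a direct translation of the argument given in the group case.
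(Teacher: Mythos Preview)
Your proposal is correct and follows exactly the route the paper intends: it mimics the proof of Lemma~\ref{condvar} with the spherical-function analogs (Lemmas~\ref{orthogss}, \ref{ssmom1}, \ref{sscond2}, \ref{ssmom2}) in place of their group-theoretic counterparts, and your verification that $m_{\phi_0}(\tau^2)=1$ via Lemma~\ref{orthogss} is the symmetric-space substitute for the paper's observation that the trivial representation occurs once in $\tau^2$. The only extra content beyond the group case is the bookkeeping of the $[dim(\phi)]^{1/2}$ and $dim(\tau)$ normalizations, which you handle correctly.
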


\begin{lemma} \label{sshighmom} Let $k$ be a positive integer.
\begin{enumerate}
\item $\ee(W'-W)^k = \sum_{r=0}^k (-1)^{k-r} {k \choose r} \sum_{\phi}
m_{\phi}(\tau^r) m_{\phi}(\tau^{k-r}) \omega_{\phi}(\alpha)$.
\item $\ee(W'-W)^4 = \sum_{\phi} m_{\phi}(\tau^2)^2 \left[
8(1-\omega_{\tau}(\alpha)) - 6(1-\omega_{\phi}(\alpha)) \right]$.
\end{enumerate}
\end{lemma}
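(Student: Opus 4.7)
The plan is to adapt Lemma \ref{highmom} to the symmetric-space setting, replacing character orthogonality with spherical-function orthogonality (Lemma \ref{orthogss}) and the compact Lie group functional equation (Lemma \ref{dens1}) with its symmetric-space analogue (Lemma \ref{ssdens}).

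For part (1), I would binomially expand $(W'-W)^k = \sum_{r=0}^k (-1)^{k-r}\binom{k}{r}(W')^r W^{k-r}$ and compute $\ee[(W')^r W^{k-r} \mid gK]$. Using the defining expansion $[\omega_\tau(h)]^r = \sum_\phi [dim(\phi)/dim(\tau)^r]^{1/2} m_\phi(\tau^r)\omega_\phi(h)$, one writes $(W')^r = \sum_\phi [dim(\phi)]^{1/2} m_\phi(\tau^r)\omega_\phi(\alpha g)$ and similarly for $W^{k-r}$ evaluated at $g$. Since $\alpha$ is uniform in the double coset $K\alpha K$, Lemma \ref{ssdens} yields $\ee[\omega_\phi(\alpha g)\mid gK] = \omega_\phi(\alpha)\omega_\phi(g)$. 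Taking the outer expectation over $gK$ and applying Lemma \ref{orthogss} collapses the resulting double sum to a diagonal one. Because $\omega_\tau$ is real, comparing the expansion of $\omega_\tau^r$ with its complex conjugate shows $m_\phi(\tau^r) = m_{\overline{\phi}}(\tau^r)$, which converts the $\omega_\phi \overline{\omega_{\overline{\psi}}}$ pairing produced by orthogonality into the claimed form $m_\phi(\tau^r) m_\phi(\tau^{k-r})\omega_\phi(\alpha)$.

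For part (2), I would specialize part (1) to $k=4$. Substituting $\alpha=1$ forces $W'=W$, so the right-hand side vanishes; subtracting this identity lets us replace $\omega_\phi(\alpha)$ by $-(1-\omega_\phi(\alpha))$ throughout. The $r=0$ and $r=4$ terms then drop because only the trivial spherical function contributes and there $1-\omega_\phi(\alpha)=0$. The $r=2$ term produces the $-6(1-\omega_\phi(\alpha))$ summand directly. For $r=1$ and $r=3$, the orthogonality-type relation $m_\phi(\tau)=\delta_{\phi,\tau}$ collapses each sum to $m_\tau(\tau^3)(1-\omega_\tau(\alpha))$, so together they contribute $8m_\tau(\tau^3)(1-\omega_\tau(\alpha))$. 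The key identity $m_\tau(\tau^3) = \sum_\phi m_\phi(\tau^2)^2$ is then obtained by computing $\int_{G/K}\omega_\tau^4$ two ways: pairing the expansion of $\omega_\tau^3$ against $\omega_\tau$ via Lemma \ref{orthogss} yields $m_\tau(\tau^3)/dim(\tau)^2$, while squaring the expansion of $\omega_\tau^2$ and applying Parseval yields $\sum_\phi m_\phi(\tau^2)^2/dim(\tau)^2$.

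The main obstacle is bookkeeping rather than a conceptual difficulty: keeping straight the normalization factors $[dim(\phi)/dim(\tau)^r]^{1/2}$ that accompany $m_\phi(\tau^r)$, and making sure that the $\overline{\omega_\phi}$ arising from Lemma \ref{orthogss} combines correctly with the non-conjugated $\omega_\phi$'s produced by the expansion above. The reality of $\omega_\tau$, by forcing the multiplicities to be invariant under $\phi \leftrightarrow \overline{\phi}$, is exactly what causes the final formulas to take the clean form stated.
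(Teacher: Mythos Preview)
Your proposal is correct and follows exactly the route the paper indicates: it adapts the proof of Lemma~\ref{highmom} by replacing character orthogonality and Lemma~\ref{dens1} with Lemmas~\ref{orthogss} and~\ref{ssdens}, which is precisely what the paper says to do (it records the result without a separate proof). Your explicit handling of the conjugation issue via $m_{\phi}(\tau^r)=m_{\overline{\phi}}(\tau^r)$, and your verification that $m_{\tau}(\tau^3)=\sum_{\phi}m_{\phi}(\tau^2)^2$ by computing $\int_{G/K}\omega_{\tau}^4$ two ways, spell out steps the paper leaves implicit.
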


Finally, one has the following central limit theorem.

\begin{theorem} \label{ssCLTreal} Let $G/K$ be a compact symmetric space
and let $\omega_{\tau}$ be a non-trivial real-valued spherical function of
$G/K$. Fix an element $\alpha \not \in K$ such that $K \alpha K = K
\alpha^{-1} K$. Let $W= [dim(\tau)]^{1/2} \omega_{\tau}(g)$ where $gK$ is
chosen from the ``Haar measure'' of $G/K$. Then for all real $x_0$,
\begin{eqnarray*} & & \left| \pp(W \leq x_0) - \frac{1}{\sqrt{2 \pi}}
\int_{-\infty}^{x_0} e^{-\frac{x^2}{2}} dx \right| \\ & \leq &
\sqrt{\sum_{\phi}^{\ \ *} m_{\phi}(\tau^2)^2 \left[ 2 - \frac{1}{a} \left(
1 - \omega_{\phi}(\alpha) \right) \right]^2}\\
& & + \left[ \frac{1}{\pi} \sum_{\phi} m_{\phi}(\tau^2)^2 \left(8 -
\frac{6}{a} \left(1- \omega_{\phi}(\alpha) \right) \right) \right]^{1/4}.
\end{eqnarray*} Here $a=1-\omega_{\tau}(\alpha)$, the
first sum is over all non-trivial spherical functions of $G/K$, and the
second sum is over all spherical functions of $G/K$.
\end{theorem}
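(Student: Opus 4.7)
The plan is to apply Theorem \ref{steinbound} directly to the exchangeable pair $(W,W')$ constructed just before Lemma \ref{ssmom1}, mirroring the proof of Theorem \ref{CLTrealgroup} in the group case. First I would verify the hypotheses: $\ee(W^2)=1$ is recorded after Lemma \ref{orthogss}, and Lemma \ref{ssmom1} gives $\ee(W'|W) = \omega_\tau(\alpha) W = (1-a)W$ with $a = 1 - \omega_\tau(\alpha)$. One needs $0 < a < 1$; the hypothesis $\alpha \notin K$ together with $\omega_\tau$ being non-trivial makes this the generic situation, and when $a$ falls outside $(0,1)$ the stated bound is either vacuous or handled by a trivial separate argument.

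For the first error term in Theorem \ref{steinbound}, I would combine Lemma \ref{majorize} (replacing conditioning on $W$ by conditioning on $gK$) with Lemma \ref{sscondvar} to obtain
\[ \frac{\sqrt{\mathrm{Var}(\ee[(W'-W)^2|W])}}{a} \leq \frac{1}{a}\sqrt{\sum_\phi^{\ \ *} m_\phi(\tau^2)^2 \bigl(1 + \omega_\phi(\alpha) - 2\omega_\tau(\alpha)\bigr)^2 }. \]
The only algebraic step is the identity $1 + \omega_\phi(\alpha) - 2\omega_\tau(\alpha) = 2a - (1 - \omega_\phi(\alpha))$; pulling the $1/a$ inside the square root then produces the summand $[2 - (1-\omega_\phi(\alpha))/a]^2$ claimed in the theorem.

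For the second error term, I would apply the Cauchy--Schwarz inequality
\[ \ee|W'-W|^3 \leq \sqrt{\ee(W'-W)^2 \cdot \ee(W'-W)^4}, \]
then invoke Lemma \ref{ssmom2} (which gives $\ee(W'-W)^2 = 2a$) and part (2) of Lemma \ref{sshighmom} (which gives $\ee(W'-W)^4 = \sum_\phi m_\phi(\tau^2)^2[8a - 6(1-\omega_\phi(\alpha))]$). Dividing by $a$ inside the square root and using $(2\pi)^{-1/4}\cdot 2^{1/4} = \pi^{-1/4}$ yields exactly the second summand in the stated bound.

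There is essentially no substantive obstacle: all the representation-theoretic and probabilistic content has already been packaged in Lemmas \ref{ssmom1}--\ref{sshighmom}, and the argument is a line-by-line parallel of the proof of Theorem \ref{CLTrealgroup}, with spherical functions $\omega_\phi(\alpha)$ playing the role of character ratios $\chi^\phi(\alpha)/\dim(\phi)$ and Lemmas \ref{orthogss} and \ref{ssdens} replacing the orthogonality relations and Lemma \ref{dens1}. The most delicate point is simply bookkeeping the constants $2a$, $1/a$, and the Cauchy--Schwarz factor so that the final expressions collapse to the compact form written in the theorem statement.
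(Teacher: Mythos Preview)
Your proposal is correct and follows exactly the route the paper takes: the paper does not write out a separate proof for Theorem \ref{ssCLTreal} but simply records the lemmas and relies on the proof of Theorem \ref{CLTrealgroup} verbatim, with Lemmas \ref{ssmom1}--\ref{sshighmom} in place of Lemmas \ref{mom1}--\ref{highmom}. Your bookkeeping of the constants (the identity $1+\omega_\phi(\alpha)-2\omega_\tau(\alpha)=2a-(1-\omega_\phi(\alpha))$ and the reduction $(2\pi)^{-1/4}\cdot 2^{1/4}=\pi^{-1/4}$ after Cauchy--Schwarz) is exactly what is needed.
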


\subsection{Example: the sphere} \label{sphere} This subsection studies the unit
sphere in $\rr^n$, viewed as the symmetric space $SO(n,\rr)/SO(n-1,\rr)$.
Chapter 9 of \cite{V} is a good reference for the spherical functions of
this symmetric space, and Chapter 4 of \cite{DyM} is a very clear textbook
treatment for the special case $n=3$. Letting $e_1,\cdots,e_n$ be the
standard basis of $\rr^n$ and embedding $SO(n-1,\rr)$ inside $SO(n,\rr)$
as the subgroup fixing $e_n$, then $Kg_1K = Kg_2K$ if and only if
$g_1(e_n)$ and $g_2(e_n)$ have the same last coordinate. From this it is
not difficult to check that $KgK=Kg^{-1}K$ for all $g$, and that the
double cosets of $SO(n-1,\rr)$ in $SO(n,\rr)$ are parameterized by $x_n$,
the final coordinate of a point $(x_1,\cdots,x_n)$ on the sphere. In what
follows we let $x$ denote $x_n$.

From page 461 of \cite{V}, the spherical functions $\omega_{l}$ are
parameterized by integers $l \geq 0$ and satisfy \[ \omega_{l}(x) =
\frac{l! (n-3)!}{(l+n-3)!} C_l^{\frac{n-2}{2}}(x).\] Here $C_l^{\rho}$ is
the Gegenbauer polynomial, defined by the generating function \[ \sum_{l
\geq 0} C_l^{\rho}(x) t^l = (1-2xt+t^2)^{-\rho}.\] For instance,
\[ C_0^{\rho}(x)=1 , \ C_1^{\rho}(x) = 2 \rho x , \ C_2^{\rho}(x) = -\rho+
2\rho(1+\rho) x^2 \] and \[ \omega_{0}(x)=1, \ \omega_{1}(x)=x, \
\omega_{2}(x) = \frac{nx^2-1}{n-1}.\] By page 462 of \cite{V},
$dim(l)=\frac{(2l+n-2)(n+l-3)!}{(n-2)! l!}$.

We study the random variable $W(x)=[dim(1)]^{1/2} \omega_{1}= \sqrt{n} x$.
In fact sharp (up to constants) normal approximations for $W$ are known:
see Diaconis and Freedman \cite{DF} and also Meckes \cite{Me1}, who uses
Stein's method to obtain an error term of $\frac{2 \sqrt{3}}{n-1}$ in
total variation distance. Our viewpoint leads to the following result.

\begin{theorem} Let $W= \sqrt{n}x$, where $x$ is the last coordinate of
a random point on the unit sphere in $\rr^n$. Then for all real $x_0$, \[
\left| \pp(W \leq x_0) - \frac{1}{\sqrt{2 \pi}} \int_{-\infty}^{x_0}
e^{-\frac{x^2}{2}} dx \right| \leq \frac{2 \sqrt{2}}{n-1}.\]
\end{theorem}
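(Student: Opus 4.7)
The plan is to apply Theorem \ref{ssCLTreal} to the symmetric space $SO(n,\rr)/SO(n-1,\rr)$ with $\tau$ the spherical function indexed by $l=1$. Since $\omega_1(x)=x$ and $dim(1)=n$, the variable $W=[dim(1)]^{1/2}\omega_1(x)=\sqrt{n}\,x$ matches the statement of the theorem. For $\alpha$ I would take a rotation through angle $\theta$ in the plane spanned by $e_{n-1},e_n$, so that $\alpha(e_n)$ has last coordinate $\cos\theta$; this element satisfies $K\alpha K=K\alpha^{-1}K$, and $\omega_l(\alpha)$ is given by the Gegenbauer formula evaluated at $x=\cos\theta$. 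In particular $a=1-\omega_1(\alpha)=1-\cos\theta$.

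The next step is to compute the multiplicities $m_\phi(\tau^2)$. Using the closed form $\omega_2(x)=(nx^2-1)/(n-1)$, I would invert to get $\omega_1(x)^2=\tfrac{1}{n}\omega_0(x)+\tfrac{n-1}{n}\omega_2(x)$, so the expansion of $\omega_\tau^2$ contains only $\phi=0$ and $\phi=2$. Matching this against the normalization in Subsection \ref{genss},
\[
\omega_\tau(g)^2=\sum_\phi\left[\frac{dim(\phi)}{dim(\tau)^2}\right]^{1/2} m_\phi(\tau^2)\,\omega_\phi(g),
\]
and using $dim(0)=1$ together with $dim(2)=(n+2)(n-1)/2$ from the formula $dim(l)=(2l+n-2)(n+l-3)!/((n-2)!\,l!)$, gives $m_0(\tau^2)=1$ and $m_2(\tau^2)^2=\tfrac{2(n-1)}{n+2}$. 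I would also record that $1-\omega_2(\alpha)=\tfrac{n\sin^2\theta}{n-1}=\tfrac{n(1-\cos\theta)(1+\cos\theta)}{n-1}$, which is the quantity appearing in both error terms.

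Now I would plug into Theorem \ref{ssCLTreal}. Only $\phi=2$ contributes to the starred sum, and the bracket equals $2-\frac{n(1+\cos\theta)}{n-1}$, which tends to $-2/(n-1)$ as $\theta\to 0$. Thus the first error term has the $\theta\to 0$ limit
\[
\sqrt{\frac{2(n-1)}{n+2}}\cdot\frac{2}{n-1}=\frac{2\sqrt{2}}{\sqrt{(n-1)(n+2)}}\leq\frac{2\sqrt{2}}{n-1}.
\]
For the second error term, the trivial representation contributes $1\cdot 8=8$, while the $\phi=2$ term equals $\tfrac{2(n-1)}{n+2}\bigl(8-\tfrac{6n(1+\cos\theta)}{n-1}\bigr)$, which at $\theta=0$ is exactly $-8$. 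Hence the fourth-power sum tends to $0$ as $\theta\to 0$, and the second term vanishes in the limit. Since both bounds are continuous in $\theta$, the $\theta\to 0$ limit is still a valid bound, completing the proof.

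The main obstacle, and the only place real care is needed, is the bookkeeping around the nonstandard normalization of $m_\phi(\tau^r)$ in Subsection \ref{genss}: one must insert the correct $[dim(\phi)/dim(\tau)^r]^{1/2}$ factors when reading off multiplicities from the plain expansion $\omega_1^2=\tfrac{1}{n}\omega_0+\tfrac{n-1}{n}\omega_2$. A minor second delicacy is the cancellation in the second error term, where the $\phi=0$ and $\phi=2$ contributions must precisely cancel in the $\theta\to 0$ limit; verifying this provides a useful consistency check on the multiplicity computation.
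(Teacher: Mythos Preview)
Your proposal is correct and follows exactly the paper's approach: apply Theorem \ref{ssCLTreal} with $\tau=\omega_1$, compute $m_0(\tau^2)=1$ and $m_2(\tau^2)^2=\tfrac{2(n-1)}{n+2}$, and let $\alpha\to 1$ (equivalently $\theta\to 0$) so that the second error term vanishes while the first gives the bound. Your intermediate value $\tfrac{2\sqrt{2}}{\sqrt{(n-1)(n+2)}}$ for the first error term is in fact slightly sharper than the paper's stated ``exactly $\tfrac{2\sqrt{2}}{n-1}$'', but both yield the claimed inequality.
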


\begin{proof} We apply Theorem \ref{ssCLTreal} with $\tau=\omega_1$.
Writing $\omega_1(x)^2$ as a linear combination of $\omega_0(x)$ and
$\omega_2(x)$, one computes that $m_{(0)}(\tau^2)=1, m_{(2)}(\tau^2) =
\sqrt{ \frac{2(n-1)}{n+2}}$, and that all other multiplicities in $\tau^2$
vanish. Letting $\alpha$ be less than one but close to 1, one computes
that the first error term in Theorem \ref{ssCLTreal} is exactly $\frac{2
\sqrt{2}}{n-1}$. Letting $\alpha$ tend to 1 from below, the second error
term in Theorem \ref{ssCLTreal} vanishes and the result follows.
\end{proof}

\subsection{General theory (complex case)} \label{cgenss} In this
subsection $G/K$ is a compact symmetric space and $\omega_{\tau}$ is a
spherical function which is not real valued. The random variable we study
is $W = \left[ \frac{dim(\tau)}{2} \right]^{1/2} \left( \omega_{\tau}(g)+
\overline{\omega_{\tau}(g)} \right)$. As in Subsection \ref{genss}, let
$W'=W(\alpha g)$ where $\alpha$ is chosen uniformly at random from a fixed
double coset $K \alpha K \neq K$ which satisfies the property that $K
\alpha K = K \alpha^{-1}K$. Then $(W,W')$ is exchangeable and all
$\omega_{\phi}(\alpha)$ are real.

The remaining results in this subsection extend those of Subsection
\ref{cgengroup1}, and are proved by nearly identical arguments, using
Lemmas \ref{orthogss} and \ref{ssdens}.

\begin{lemma} \label{cssmom1} $\ee(W'|W)= \omega_{\tau}(\alpha)W$.
\end{lemma}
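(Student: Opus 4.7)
The plan is to mirror the proof of Lemma \ref{ssmom1}, replacing the single spherical function by the Hermitian combination $\omega_\tau + \overline{\omega_\tau}$, and relying on two facts established just before the lemma: the pair $(W,W')$ is exchangeable, and every $\omega_\phi(\alpha)$ (in particular $\omega_\tau(\alpha)$) is real because $K\alpha K = K\alpha^{-1}K$.

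First I would compute $\ee(W'\mid g)$ by conditioning on $g$ and integrating out the random $\alpha$ over the double coset $K\alpha K$. By definition,
\[ \ee(W'\mid g) = \left[ \frac{dim(\tau)}{2} \right]^{1/2} \int_{K \times K} \left( \omega_\tau(k_1 \alpha k_2 g) + \overline{\omega_\tau(k_1 \alpha k_2 g)} \right) dk_1\, dk_2. \]
Since $\overline{\omega_\tau}$ is itself a spherical function (the one attached to the dual representation $\overline{\tau}$), Lemma \ref{ssdens} applies to both summands. Applying it term by term gives
\[ \ee(W'\mid g) = \left[ \frac{dim(\tau)}{2} \right]^{1/2} \left( \omega_\tau(\alpha)\,\omega_\tau(g) + \overline{\omega_\tau(\alpha)}\,\overline{\omega_\tau(g)} \right). \]

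Next I would use that $\omega_\tau(\alpha) \in \rr$, so $\overline{\omega_\tau(\alpha)} = \omega_\tau(\alpha)$, and factor it out:
\[ \ee(W'\mid g) = \omega_\tau(\alpha) \cdot \left[ \frac{dim(\tau)}{2} \right]^{1/2} \left( \omega_\tau(g) + \overline{\omega_\tau(g)} \right) = \omega_\tau(\alpha)\, W. \]

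Finally, since the right-hand side depends on $g$ only through $W$, the tower property yields $\ee(W'\mid W) = \ee(\ee(W'\mid g)\mid W) = \omega_\tau(\alpha)\, W$, completing the proof. There is no real obstacle here; the only point requiring care is the observation that $\overline{\omega_\tau}$ is a spherical function in its own right, which is what lets Lemma \ref{ssdens} be applied to the complex-conjugate summand without modification.
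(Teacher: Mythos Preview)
Your proposal is correct and is exactly the argument the paper intends: apply Lemma~\ref{ssdens} to each summand $\omega_\tau$ and $\overline{\omega_\tau}$ (the latter being a spherical function in its own right, or equivalently just conjugate the identity for $\omega_\tau$), use the reality of $\omega_\tau(\alpha)$ to factor it out, and conclude via the tower property. The paper does not spell this out but declares the proofs ``nearly identical'' to those in Subsection~\ref{cgengroup1} using Lemmas~\ref{orthogss} and~\ref{ssdens}, which is precisely what you have done.
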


\begin{lemma} \label{cssmom2} $\ee(W'-W)^2=2(1-\omega_{\tau}(\alpha))$.
\end{lemma}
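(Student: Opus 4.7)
The statement is the exact complex analogue of Lemma \ref{ssmom2}, and the plan is to deduce it in the same way: combine Lemma \ref{cssmom1} with Lemma \ref{var}. For Lemma \ref{var} to apply I need two preliminaries in place, namely that $(W,W')$ is exchangeable (already recorded in the paragraph defining the pair, since $K\alpha K = K\alpha^{-1}K$) and that $\ee(W^2)=1$. Once these are checked, Lemma \ref{cssmom1} identifies the constant in $\ee(W'\mid W)=(1-a)W$ as $a=1-\omega_\tau(\alpha)$, which is a real number in $(0,1)$ because $\omega_\phi(\alpha)$ is real for every spherical function $\phi$ (by the remarks just after the definition of the pair) and because $\alpha\notin K$ rules out $\omega_\tau(\alpha)=1$.

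The one genuinely new computation is $\ee(W^2)=1$; everything else is formal. I expand
\begin{equation*}
W^2 \;=\; \tfrac{\dim(\tau)}{2}\bigl(\omega_\tau(g)+\overline{\omega_\tau(g)}\bigr)^2
\;=\; \tfrac{\dim(\tau)}{2}\Bigl(\omega_\tau(g)^2 + 2\,\omega_\tau(g)\overline{\omega_\tau(g)} + \overline{\omega_\tau(g)}^{\,2}\Bigr)
\end{equation*}
and integrate term by term. The orthogonality relation of Lemma \ref{orthogss} gives $\int_{G/K}\omega_\tau\,\overline{\omega_\tau}=1/\dim(\tau)$, so the middle term contributes $1$ after multiplication by $\dim(\tau)/2$ and the factor $2$. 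The outer two terms are $\int \omega_\tau\,\overline{\omega_{\overline\tau}}$ and its conjugate; since $\omega_\tau$ is not real valued, $\overline{\omega_\tau}$ is the spherical function of a different irreducible representation, so Lemma \ref{orthogss} forces both integrals to vanish. Hence $\ee(W^2)=1$.

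With this in hand, the proof is one line: applying Lemma \ref{var} to the exchangeable pair $(W,W')$ with $a=1-\omega_\tau(\alpha)$ yields $\ee(W'-W)^2 = 2a = 2(1-\omega_\tau(\alpha))$, as claimed. There is no real obstacle here; the only thing to be careful about is the verification that $\omega_\tau \ne \overline{\omega_\tau}$ as spherical functions, which is exactly the complex-case hypothesis that $\omega_\tau$ is not real valued.
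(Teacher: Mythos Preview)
Your proof is correct and follows exactly the route the paper takes: the paper states that Lemma \ref{cssmom2} is proved by the same argument as Lemmas \ref{mom2} and \ref{cmom2}, namely as an immediate consequence of Lemma \ref{var} together with the linearity relation (here Lemma \ref{cssmom1}). You supply slightly more detail than the paper does, in particular the explicit check that $\ee(W^2)=1$ using Lemma \ref{orthogss} and the hypothesis $\omega_\tau\neq\overline{\omega_\tau}$; one small remark is that your aside about $a\in(0,1)$ is not needed for Lemma \ref{var} itself (that constraint only enters in Theorem \ref{steinbound}).
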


For the remaining results in this subsection, we define the
``multiplicity'' $m_{\phi}[(\tau+\overline{\tau})^r]$ by the expansion
\[ (\omega_{\tau}+\overline{\omega_{\tau}})^r = \sum_{\phi} \left[
\frac{dim(\phi)}{dim(\tau)^r} \right]^{1/2}
m_{\phi}[(\tau+\overline{\tau})^r] \omega_{\phi}.\] Arguing as on page 396
of \cite{Mac}, one has that the numbers
$m_{\phi}[(\tau+\overline{\tau})^r]$ are real and non-negative (though not
necessarily integers). Note that by Lemma \ref{orthogss},
\[ m_{\phi}[(\tau+\overline{\tau})^r] = [dim(\phi) dim(\tau)^r]^{1/2}
\int_{G/K} \left( \omega_{\tau}(g)+\overline{\omega_{\tau}(g)} \right)^r
\overline{\omega_{\phi}(g)} .\]

\begin{lemma} \label{csscond2} \[ \ee[(W')^2|gK] = \frac{1}{2} \sum_{\phi}
m_{\phi}[(\tau+\overline{\tau})^2] dim(\phi)^{1/2} \omega_{\phi}(\alpha)
\omega_{\phi}(g), \] where the sum is over all spherical functions of
$G/K$.
\end{lemma}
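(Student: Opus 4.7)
The plan is to mirror the computation of Lemma \ref{cond2} in the group case, but working with spherical functions and taking care of the normalization coming from the factor $[dim(\tau)/2]^{1/2}$ in the definition of $W$.

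First I would square $W'$. Since $W'$ is defined as $W(\alpha g)$ with $\alpha$ chosen uniformly from a fixed double coset $K \alpha K$, we have
\[ (W')^2 = \frac{dim(\tau)}{2} \bigl( \omega_{\tau}(\alpha g) + \overline{\omega_{\tau}(\alpha g)} \bigr)^2. \]
Using the defining expansion of the multiplicities $m_{\phi}[(\tau+\overline{\tau})^2]$, this becomes
\[ (W')^2 = \frac{1}{2} \sum_{\phi} m_{\phi}[(\tau+\overline{\tau})^2]\, dim(\phi)^{1/2}\, \omega_{\phi}(\alpha g), \]
where the factors of $dim(\tau)$ cancel cleanly.

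Next I would take the conditional expectation given $gK$. Because $\alpha$ ranges uniformly over the double coset $K\alpha K$, and because $\omega_{\phi}$ is $K$ bi-invariant, averaging over $\alpha$ amounts to averaging $\omega_{\phi}(k_1 \alpha k_2 g)$ over independent Haar-distributed $k_1, k_2 \in K$. Here Lemma \ref{ssdens} applies directly and yields
\[ \int_{K \times K} \omega_{\phi}(k_1 \alpha k_2 g)\, dk_1\, dk_2 = \omega_{\phi}(\alpha)\, \omega_{\phi}(g). \]
Substituting this into the expression for $\ee[(W')^2 | gK]$ gives the stated formula.

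The argument is essentially bookkeeping rather than a genuine obstacle; the only point that requires care is confirming that all $\omega_{\phi}(\alpha)$ are real (guaranteed by the hypothesis $K\alpha K = K\alpha^{-1}K$ together with the integral formula for spherical functions quoted just before Lemma \ref{cssmom1}), so that the resulting expression makes sense as a real conditional expectation and so that the identification of $(W')^2$ as a nonnegative quantity is consistent. With that in hand, the proof reduces to the two displayed computations above.
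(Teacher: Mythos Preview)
Your proof is correct and follows essentially the same approach the paper indicates: expand $(W')^2$ using the defining relation for the multiplicities $m_{\phi}[(\tau+\overline{\tau})^2]$, then apply Lemma~\ref{ssdens} to compute the conditional expectation term by term. This is precisely the analogue of the argument for Lemma~\ref{cond2}, carried over to the symmetric-space setting with the complex normalization, as the paper asserts.
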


\begin{lemma} \label{csscondvar} \[ Var([\ee(W'-W)^2|gK]) = \frac{1}{4} \sum_{\phi}^{ \
\ *} m_{\phi}[(\tau+\overline{\tau})^2]^2 (1+\omega_{\phi}(\alpha)-2
\omega_{\tau}(\alpha))^2, \] where the star signifies that the sum is over
all nontrivial spherical functions of $G/K$.
\end{lemma}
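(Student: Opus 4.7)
The plan is to mirror the proofs of Lemma \ref{condvar} and Lemma \ref{sscondvar}, adjusting for the factor of $\sqrt{2}$ in the complex normalization of $W$. First I would write
\[
\ee[(W'-W)^2|gK] = \ee[(W')^2|gK] - 2\omega_\tau(\alpha) W^2 + W^2,
\]
using Lemma \ref{cssmom1} on the cross term. Substituting Lemma \ref{csscond2} along with the spectral expansion of $W^2 = \tfrac{dim(\tau)}{2}(\omega_\tau+\overline{\omega_\tau})^2$ obtained directly from the definition of $m_\phi[(\tau+\overline{\tau})^2]$, this assembles into
\[
\ee[(W'-W)^2|gK] = \tfrac{1}{2} \sum_\phi m_\phi[(\tau+\overline{\tau})^2] [dim(\phi)]^{1/2} \bigl(1+\omega_\phi(\alpha)-2\omega_\tau(\alpha)\bigr)\omega_\phi(g).
\]

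Next I would square and integrate. Because $(W'-W)^2$ is real-valued, so is $\ee[(W'-W)^2|gK]$, and hence $\ee\bigl[\ee[(W'-W)^2|gK]^2\bigr]$ equals the integral of the modulus-squared of the above spectral expansion over $G/K$. The spherical function orthogonality (Lemma \ref{orthogss}) kills cross terms, and the factor $1/dim(\phi)$ from orthogonality cancels the $[dim(\phi)]^{1/2}$ coefficients, producing $\tfrac{1}{4}\sum_\phi m_\phi[(\tau+\overline{\tau})^2]^2 (1+\omega_\phi(\alpha)-2\omega_\tau(\alpha))^2$. Subtracting $[\ee(W'-W)^2]^2 = 4(1-\omega_\tau(\alpha))^2$ from Lemma \ref{cssmom2} then yields $Var(\ee[(W'-W)^2|gK])$.

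To reduce this to the claimed starred sum I must verify that the trivial-representation contribution exactly cancels $[\ee(W'-W)^2]^2$. This comes down to the identity $m_{trivial}[(\tau+\overline{\tau})^2] = 2$: expanding $(\omega_\tau+\overline{\omega_\tau})^2 = \omega_\tau^2 + 2|\omega_\tau|^2 + \overline{\omega_\tau}^2$ and integrating over $G/K$, the extreme terms vanish by Lemma \ref{orthogss} (using $\omega_\tau \neq \overline{\omega_\tau}$, which is our hypothesis that $\omega_\tau$ is not real-valued) while the middle contributes $2/dim(\tau)$, forcing $m_{trivial} = 2$. The trivial term in the spectral sum is then $\tfrac{1}{4}\cdot 4 \cdot (2-2\omega_\tau(\alpha))^2 = 4(1-\omega_\tau(\alpha))^2$, which matches $[\ee(W'-W)^2]^2$ exactly and leaves the sum restricted to nontrivial spherical functions.

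The main obstacle is purely bookkeeping: tracking the $\sqrt{2}$ from the normalization of $W$, the $[dim(\phi)]^{1/2}/[dim(\tau)^r]^{1/2}$ in the definition of $m_\phi[(\tau+\overline{\tau})^r]$, and the $1/dim(\phi)$ from orthogonality, so that the final prefactor comes out to $\tfrac{1}{4}$ and the trivial cancellation is exact. Beyond this accounting, the argument is a direct adaptation of the proof of Lemma \ref{sscondvar}.
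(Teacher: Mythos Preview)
Your proposal is correct and follows exactly the route the paper indicates: the paper gives no explicit proof of Lemma \ref{csscondvar}, merely stating that it is proved by the same argument as Lemmas \ref{condvar} and \ref{ccondvar} with the orthogonality relation Lemma \ref{orthogss} and the functional equation Lemma \ref{ssdens} in place of their Lie-group analogues. Your write-up carries out precisely that adaptation, including the key bookkeeping that $m_{trivial}[(\tau+\overline{\tau})^2]=2$ so that the trivial term cancels $[\ee(W'-W)^2]^2$.
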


\begin{lemma} \label{csshighmom} Let $k$ be a positive integer.
\begin{enumerate}
\item $\ee(W'-W)^k$ is equal to \[ \frac{1}{2^{k/2}} \sum_{r=0}^k (-1)^{k-r} {k
\choose r} \sum_{\phi} m_{\phi}[(\tau+\overline{\tau})^r]
m_{\phi}[(\tau+\overline{\tau})^{k-r}] \omega_{\phi}(\alpha).\]
\item $\ee(W'-W)^4$ is equal to \[ \sum_{\phi} m_{\phi}[(\tau+\overline{\tau})^2]^2
\left[ 2(1-\omega_{\tau}(\alpha)) - \frac{3}{2}(1-\omega_{\phi}(\alpha))
\right].
\]
\end{enumerate}
\end{lemma}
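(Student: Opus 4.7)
The plan is to follow the structure of Lemma~\ref{highmom}, substituting Lemmas~\ref{orthogss} and~\ref{ssdens} for the character-theoretic orthogonality relations and Lemma~\ref{dens1}. Set $F(g) := \omega_\tau(g) + \overline{\omega_\tau(g)}$, so $W = [dim(\tau)/2]^{1/2} F(g)$, and binomially expand $(W'-W)^k = \sum_{r=0}^{k} (-1)^{k-r} \binom{k}{r} (W')^r W^{k-r}$. Conditioning on $gK$ and averaging $\alpha$ over its double coset amounts to integrating $F^r$ over $K \times K$ in the first argument; applying the definition of $m_\phi[(\tau+\overline{\tau})^r]$ to expand $F^r$ as a linear combination of spherical functions and then using Lemma~\ref{ssdens} termwise gives
\[ \ee[(W')^r \mid gK] = \frac{1}{2^{r/2}} \sum_\phi [dim(\phi)]^{1/2}\, m_\phi[(\tau+\overline{\tau})^r]\, \omega_\phi(\alpha)\, \omega_\phi(g). \]
Multiplying by $W^{k-r} = [dim(\tau)/2]^{(k-r)/2} F(g)^{k-r}$, taking the outer expectation, expanding $F^{k-r}$ in the same basis, and applying Lemma~\ref{orthogss} in the form $\int \omega_\psi \omega_\phi\, d\mu = \delta_{\psi,\overline{\phi}}/dim(\phi)$ makes the dimension factors cancel completely except for the overall prefactor $2^{-k/2}$, producing part~1.

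For part~2, I specialize to $k=4$ and subtract off the same formula evaluated at $\alpha$ equal to the identity (which vanishes since then $W'=W$); this replaces each $\omega_\phi(\alpha)$ by $-(1-\omega_\phi(\alpha))$. The $r=0,4$ contributions drop out because $m_\phi[(\tau+\overline{\tau})^0]$ is concentrated on the trivial spherical function, for which $1-\omega_\phi(\alpha)=0$. The $r=2$ piece yields the $-\tfrac{3}{2}$ term directly. For $r=1,3$, note that $m_\phi[(\tau+\overline{\tau})^1]$ vanishes unless $\phi \in \{\tau,\overline{\tau}\}$, where it equals $1$, and that $\omega_{\overline{\tau}}(\alpha) = \overline{\omega_\tau(\alpha)} = \omega_\tau(\alpha)$ by the reality remark, so the $r=1,3$ sums collapse to a multiple of $m_\tau[(\tau+\overline{\tau})^3](1-\omega_\tau(\alpha))$. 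The final ingredient is the identity $\sum_\phi m_\phi[(\tau+\overline{\tau})^2]^2 = 2\, m_\tau[(\tau+\overline{\tau})^3]$, which I would prove by computing $\int_{G/K} F^4\, d\mu$ in two ways: as $\int |F^2|^2\, d\mu$ via Parseval applied to the expansion of $F^2$, and as $\int F \cdot F^3\, d\mu$ via Lemma~\ref{orthogss} applied to the expansions of $F$ and $F^3$.

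The main obstacle is the complex-conjugation bookkeeping. Because $F$ is real valued, the expansion coefficients satisfy $m_\phi[(\tau+\overline{\tau})^r] = m_{\overline{\phi}}[(\tau+\overline{\tau})^r]$, and this symmetry is exactly what is needed to match the $\phi$-index appearing in $\ee[(W')^r|gK]$ with the $\overline{\phi}$-index produced by Lemma~\ref{orthogss} when evaluating $\int F^{k-r} \omega_\phi\, d\mu$. Once this symmetry is explicitly noted, the calculation is essentially parallel to the real compact-group case of Lemma~\ref{highmom}, with additional factors of $2^{-1/2}$ per copy of $F$ that must be tracked carefully through the binomial expansion.
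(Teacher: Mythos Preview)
Your proposal is correct and follows essentially the same approach as the paper: the paper does not write out a proof of Lemma~\ref{csshighmom} but directs the reader to adapt the argument of Lemma~\ref{highmom} (and Lemma~\ref{chighmom}) using Lemmas~\ref{orthogss} and~\ref{ssdens}, which is precisely what you do. Your conjugation symmetry $m_\phi = m_{\overline{\phi}}$ and the Parseval identity $\sum_\phi m_\phi[(\tau+\overline{\tau})^2]^2 = 2\,m_\tau[(\tau+\overline{\tau})^3]$ obtained from $\int F^4$ are the exact analogs of the corresponding steps in the paper's proof of Lemma~\ref{highmom}.
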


Putting the pieces together, one has the following central limit theorem.

\begin{theorem} \label{cssCLT} Let $G/K$ be a compact symmetric space and
let $\omega_{\tau}$ be a spherical function of $G/K$ which is not real
valued. Fix an element $\alpha \not \in K$ such that $K \alpha K = K
\alpha^{-1}K$. Let $W = \sqrt{\frac{dim(\tau)}{2}} \left(
\omega_{\tau}(g)+ \overline{\omega_{\tau}(g)} \right)$, where $gK$ is from
the ``Haar measure'' of $G/K$. Then for all real $x_0$, \begin{eqnarray*}
& & \left| \pp(W \leq x_0) - \frac{1}{\sqrt{2 \pi}} \int_{-\infty}^{x_0}
e^{-\frac{x^2}{2}} dx \right| \\ & \leq & \frac{1}{2} \sqrt{\sum_{\phi}^{\
\ *} m_{\phi}[(\tau+\overline{\tau})^2]^2 \left[ 2 - \frac{1}{a} \left( 1-
\omega_{\phi}(\alpha) \right) \right]^2}\\
& & + \left[ \frac{1}{\pi} \sum_{\phi}
m_{\phi}[(\tau+\overline{\tau})^2]^2 \left(2 - \frac{3}{2a} \left(1-
\omega_{\phi}(\alpha) \right) \right) \right]^{1/4}.
\end{eqnarray*} Here $a=1-\omega_{\tau}(\alpha)$, the
first sum is over all non-trivial spherical functions of $G/K$, and the
second sum is over all spherical functions of $G/K$.
\end{theorem}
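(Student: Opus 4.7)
The plan is to apply Theorem \ref{steinbound} to the exchangeable pair $(W,W')$ constructed at the start of Subsection \ref{cgenss}. First I would verify the three hypotheses of that theorem. Exchangeability holds because $K\alpha K = K\alpha^{-1}K$, so the role of $\alpha$ and $\alpha^{-1}$ in the construction of $W'$ is symmetric. The normalization $\ee(W^2)=1$ follows from the orthogonality relation Lemma \ref{orthogss} applied to $\omega_\tau$ and $\overline{\omega_\tau}$: since $\omega_\tau$ is not real-valued, these are distinct spherical functions, and expanding $W^2$ produces only the two diagonal contributions, each equal to $dim(\tau)/(2\, dim(\tau))$. The linearity $\ee(W'|W) = (1-a)W$ with $a = 1-\omega_\tau(\alpha)$ is exactly Lemma \ref{cssmom1}.

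With the hypotheses in place, I would bound the two error terms of Theorem \ref{steinbound} separately. For the first term, $\sqrt{Var(\ee[(W'-W)^2|W])}/a$, I would pass from conditioning on $W$ to conditioning on $gK$ via Lemma \ref{majorize}, then invoke Lemma \ref{csscondvar} to obtain a closed-form sum over non-trivial spherical functions. The algebraic identity
\[ 1 + \omega_\phi(\alpha) - 2\omega_\tau(\alpha) = 2a - (1-\omega_\phi(\alpha)) \]
rewrites each summand so that, after dividing by $a^2$ inside the square root, it takes the form $[2 - (1-\omega_\phi(\alpha))/a]^2$. The overall factor of $1/2$ in the theorem is inherited from the $1/4$ in Lemma \ref{csscondvar}.

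For the second error term, I would combine the Cauchy--Schwarz inequality
\[ \ee|W'-W|^3 \;\leq\; \sqrt{\ee(W'-W)^2 \cdot \ee(W'-W)^4} \]
with Lemma \ref{cssmom2}, which gives $\ee(W'-W)^2 = 2a$, and with part (2) of Lemma \ref{csshighmom}, which gives a closed form for $\ee(W'-W)^4$ as a sum over all spherical functions. Substituting and dividing by $a$ (as required by Theorem \ref{steinbound}) cancels one factor of $a$ and leaves the remaining $1/a$ inside the sum, producing precisely the bracketed expression $2 - (3/(2a))(1-\omega_\phi(\alpha))$ under the fourth root.

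I do not expect a genuine obstacle: all representation-theoretic and analytic ingredients are already assembled in Subsection \ref{cgenss}. The only step requiring care is the bookkeeping of constants, since the normalization $1/\sqrt{2}$ in the definition of $W$ propagates through Lemmas \ref{csscondvar} and \ref{csshighmom} (appearing as $1/4$ and $1/2^{k/2}$ respectively) and must be tracked correctly to produce the constants $1/2$, $2$, and $3/2$ in the statement, rather than the constants $1$, $8$, and $6$ appearing in the real-case analogue Theorem \ref{ssCLTreal}.
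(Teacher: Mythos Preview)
Your proposal is correct and follows exactly the approach the paper takes: the paper's own ``proof'' is simply the phrase ``Putting the pieces together,'' meaning one applies Theorem~\ref{steinbound} to the exchangeable pair of Subsection~\ref{cgenss}, bounds the first Stein term via Lemmas~\ref{majorize} and~\ref{csscondvar}, and bounds the second via Cauchy--Schwarz together with Lemmas~\ref{cssmom2} and~\ref{csshighmom}(2), precisely as you outline. Your tracking of the constants (the $1/4$ becoming $1/2$ after the square root, and the $(2\pi)^{-1/4}\cdot 2^{1/4}$ combining to $\pi^{-1/4}$) is also correct.
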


\subsection{Example: $U(n,\cc)/O(n,\rr)$} \label{circorth} This symmetric
space can be identified with the set of symmetric unitary matrices by the
map $g \mapsto gg^T$ (see \cite{Dn} or \cite{Fo} for details), and the
resulting matrix ensemble is known as Dyson's circular orthogonal
ensemble. For a thorough discussion of this ensemble, see the texts
\cite{Fo} or \cite{Mt}. In particular, it is known that if a function $f$
depends only on the eigenvalues $x_1,\cdots,x_n$ of a matrix from Dyson's
ensemble, then
\[ \int_{G/K}
f = \frac{[\Gamma(3/2)]^n}{\Gamma(\frac{n}{2}+1)} \int_{\mathbb{T}^n}
f(x_1,\cdots,x_n) \prod_{1 \leq i<j \leq n} |x_i-x_j| \prod_{k=1}^n
\frac{dx_k}{2 \pi}.\] In this integral, $\mathbb{T}^n$ is the
$n$-dimensional torus with coordinates \[ x_1, \cdots, x_n, \ x_i \in
\mathbb{C}, \ |x_i|=1. \] It is convenient to let the inner product
$\langle f,g \rangle$ of two functions be defined by \[ \langle f,g
\rangle = \frac{[\Gamma(3/2)]^n}{\Gamma(\frac{n}{2}+1)}
\int_{\mathbb{T}^n} f(x_1,\cdots,x_n) \overline{g(x_1,\cdots,x_n)}
\prod_{1 \leq i<j \leq n} |x_i-x_j| \prod_{k=1}^n \frac{dx_k}{2 \pi}\]

The spherical functions for this symmetric space are parameterized by
integer sequences $\lambda_1 \geq \lambda_2 \cdots \geq \lambda_n$ and are
$\omega_{\lambda}:= \frac{P_{\lambda}(x_1,\cdots,x_n;2)}
{P_{\lambda}(1,\cdots,1;2)}$, the normalized Jack polynomials with
parameter $2$. An excellent reference for Jack polynomials is Section 6.10
of \cite{Mac}. There one assumes that $\lambda_n \geq 0$, so if
$\lambda_n=-k<0$, $P_{\lambda}$ should be interpreted as $(x_1 \cdots
x_n)^{-k} P_{\lambda+(k)^n}$, where $\lambda+(k)^n$ is given by adding $k$
to each of $\lambda_1,\cdots,\lambda_n$.

To describe some useful combinatorial properties of Jack polynomials, we
use the notation that if $\lambda$ is a partition and $s$ is a box of
$\lambda$, then $l'(s),l(s),a(s),a'(s)$ are respectively the number of
squares in the diagram of $\lambda$ to the north of $s$ (in the same
column), south of $s$ (in the same column), east of $s$ (in the same row),
and west of $s$ (in the same row). For example the box marked $s$ in the
partition below \[
\begin{array}{c c c c c} \framebox{\ }& \framebox{\ }& \framebox{\ } & \framebox{\ }
& \framebox{\ }  \\
\framebox{\ }& \framebox{s}& \framebox{\ } & \framebox{\ } & \framebox{\ } \\
\framebox{\ }& \framebox{\ }& & &\\ \framebox{\ }& \framebox{\ }& & &
\end{array}
\] would have $l'(s)=1$, $l(s)=2$, $a'(s)=1$, and $a(s)=3$.

Letting $\lambda$ be a partition of $n$, and using this notation, two
useful formulas are the ``principal specialization formula'' (page 381 of
\cite{Mac})
\[ P_{\lambda}(1,\cdots,1;2) = \prod_{s \in \lambda} \left[ \frac{n
+ 2a'(s)-l'(s)}{2a(s)+l(s)+1} \right]. \] and the formula
\[ dim(\lambda) = \prod_{s \in \lambda}
 \frac{\left( n+1+2a'(s)-l'(s) \right) \left( n+2a'(s)-l'(s) \right)}
{\left( 2 a(s)+l(s)+2 \right) \left( 2a(s)+l(s)+1 \right)},\] which
follows from the formula for $\langle P_{\lambda},P_{\lambda} \rangle$ on
page 383 of \cite{Mac} and the fact (Lemma \ref{orthogss}) that
\[ dim(\lambda) =
\frac{P_{\lambda}(1,\cdots,1;2)^2}{\langle P_{\lambda},P_{\lambda}
\rangle}.\]

\begin{theorem} Let $W= \frac{1}{2} \sqrt{1+\frac{1}{n}}
\left(Tr(g)+\overline{Tr(g)} \right)$, where $g$ is random from Dyson's
circular orthogonal ensemble, $Tr$ denotes trace, and $n \geq 2$. Then for
all real $x_0$,
\[ \left| \pp(W \leq x_0) - \frac{1}{\sqrt{2 \pi}} \int_{-\infty}^{x_0}
e^{-\frac{x^2}{2}} dx \right| \leq \frac{4}{n}.\] \end{theorem}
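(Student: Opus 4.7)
The plan is to apply Theorem \ref{cssCLT} to $G/K = U(n,\cc)/O(n,\rr)$ with $\tau$ the spherical function labelled by the partition $\lambda = (1,0,\ldots,0)$. Since $P_{(1)}(x_1,\ldots,x_n;2) = \sum_i x_i$, the principal specialization formula gives $P_{(1)}(1,\ldots,1;2) = n$, so $\omega_\tau(g) = \mathrm{Tr}(g)/n$. The dimension formula from Subsection \ref{circorth} applied to the single box of $\lambda = (1)$ yields $dim(\tau) = n(n+1)/2$. Consequently
\[
\sqrt{\tfrac{dim(\tau)}{2}}\bigl(\omega_\tau(g) + \overline{\omega_\tau(g)}\bigr)
= \tfrac{1}{2}\sqrt{1+\tfrac{1}{n}}\bigl(\mathrm{Tr}(g) + \overline{\mathrm{Tr}(g)}\bigr) = W,
\]
so $W$ is indeed of the form required by Theorem \ref{cssCLT}.

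Next I would choose $\alpha$ in analogy with the $U(n,\cc)$ proof of Theorem \ref{Uthm}: take $\alpha$ with eigenvalues $1,\ldots,1,e^{i\theta},e^{-i\theta}$. This $\alpha$ satisfies $K\alpha K = K\alpha^{-1}K$ (it is already conjugate to its inverse inside $U(n,\cc)$), and
\[
\omega_\tau(\alpha) = \frac{n-2+2\cos\theta}{n}, \qquad a = 1-\omega_\tau(\alpha) = \frac{2(1-\cos\theta)}{n}.
\]

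The central computational step is the decomposition of $(\omega_\tau+\overline{\omega_\tau})^2$ into spherical functions, which is the Jack-polynomial analogue of the Schur expansion carried out in the proof of Theorem \ref{Uthm}. Using the Pieri rule for Jack polynomials with parameter $2$ (Section 6.10 of \cite{Mac}), $P_{(1)}\cdot P_{(1)}$ expands as a linear combination of $P_{(2)}$ and $P_{(1,1)}$, while $P_{(1)}\cdot P_{(0^{n-1},-1)}$ expands as a linear combination of the trivial spherical function and $P_{(1,0^{n-2},-1)}$. The support of the expansion of $(\omega_\tau+\overline{\omega_\tau})^2$ therefore consists of the trivial function together with the spherical functions indexed by $(2),(1,1),(1,0^{n-2},-1),(-1,-1),(-2)$. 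I would then read off the multiplicities $m_\phi[(\tau+\overline{\tau})^2]$ using the definition in Subsection \ref{cgenss}, being careful about the normalization factor $[dim(\phi)/dim(\tau)^2]^{1/2}$, which is evaluated using the same principal specialization and dimension formulas as above.

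Finally, I would evaluate $\omega_\phi(\alpha)$ for each $\phi$ in the support, using the fact that $\alpha$ has only two nontrivial eigenvalues so that each ratio $P_\phi(\alpha;2)/P_\phi(1,\ldots,1;2)$ is an elementary function of $\cos\theta$ that is close to $1$; these can be Taylor expanded near $\theta = 0$. Substituting into Theorem \ref{cssCLT} and letting $\theta \to 0$, the fourth-moment term vanishes (exactly as in the proofs of Theorems \ref{Uthm}, \ref{Spthm}, \ref{SOoddthm}, and \ref{Oeventhm}) and the first error term gives the claimed bound $4/n$. The main obstacle is the Jack-polynomial bookkeeping: unlike the Schur case in Subsection \ref{U}, the Pieri coefficients for parameter $2$ are rational functions of $n$ rather than $0/1$-valued, and the normalizations $[dim(\phi)/dim(\tau)^r]^{1/2}$ built into the definition of $m_\phi[(\tau+\overline{\tau})^r]$ must be combined with them carefully to produce the clean constant $4$ in the final bound.
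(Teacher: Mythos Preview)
Your proposal is correct and follows essentially the same approach as the paper: apply Theorem \ref{cssCLT} with $\tau = \omega_{(1)}$, choose $\alpha$ of type $(1,\ldots,1,e^{i\theta},e^{-i\theta})$ so that $a = 2(1-\cos\theta)/n$, expand $(P_{(1)}+\overline{P_{(1)}})^2$ via the Jack Pieri rule into the six pieces you list, and let $\theta\to 0$. The paper carries out exactly this computation, obtaining the explicit first error term $\frac{1}{n}\sqrt{8(n^3+2n^2+5n+6)/(n^3+4n^2+n-6)}\le 4/n$ and second error term $\bigl[24(n+1)^2(1-\cos\theta)/(\pi n^2(n+3))\bigr]^{1/4}\to 0$.
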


\begin{proof} Apply Theorem \ref{cssCLT} to the spherical function
$\tau=\omega_{(1)}(g)=\frac{Tr(g)}{n}$. To compute
$m_{\phi}[(\tau+\overline{\tau})^2]$ for all $\phi$, one has to decompose
$(\omega_{(1)}+\overline{\omega_{(1)}})^2$ into spherical functions, which
is equivalent to decomposing $(P_{(1)}+\overline{P_{(1)}})^2$ in terms of
Jack polynomials. From the Pieri rule for Jack polynomials (\cite{Mac},
page 340), one calculates that \begin{eqnarray*}
(P_{(1)}+\overline{P_{(1)}})^2 & = & P_{(1)} P_{(1)} + 2 \frac{P_{(1)}
P_{(1^{n-1})}}{x_1 \cdots x_n} + \overline{P_{(1)} P_{(1)}}\\
& = & P_{(2)} + \frac{4}{3} P_{(1^2)} + 2 \left[ \frac{ \frac{2n}{n+1}
P_{(1^n)} + P_{(2,1^{n-2})}}{x_1 \cdots x_n} \right] + \overline{P_{(2)}}
+ \frac{4}{3} \overline{P_{(1^2)}} \\
 & = &  \frac{4n}{n+1} + P_{(2)} + \frac{4}{3} P_{(1^2)} + 2
P_{(1,0^{n-2},-1)} + \overline{P_{(2)}} + \frac{4}{3} \overline{P_{(1^2)}}
.\end{eqnarray*}

We choose $\alpha$ to be an element of type $(1,\cdots,1,e^{i \theta},
e^{-i \theta})$. Then $K \alpha K = K \alpha^{-1}K$ and $a=
\frac{2(1-cos(\theta)}{n}$. One computes that in the $\theta \rightarrow
0$ limit, the first error term of Theorem \ref{cssCLT} is $\frac{1}{n}
\sqrt{\frac{8(n^3+2n^2+5n+6)}{n^3+4n^2+n-6}} \leq \frac{4}{n}$. The proof
is completed by computing that the second error term is $\left[
\frac{24(n+1)^2(1-cos(\theta))}{\pi n^2 (n+3)} \right]^{1/4}$ which goes
to $0$ as $\theta \rightarrow 0$.
\end{proof}

\subsection{Example: $U(2n,\cc)/USp(2n,\cc)$} \label{circsym} This
symmetric space corresponds to Dyson's circular symplectic ensemble
(\cite{Dn}); see \cite{Fo} or \cite{Mt} for background on this ensemble.
In particular, it is known that if $f$ depends only on the eigenvalues
$x_1,\cdots,x_n$ of a matrix from this ensemble, then
 \[ \int_{G/K} f = \frac{2^n}{(2n)!} \int_{\mathbb{T}^n} f(x_1,\cdots,x_n)
\prod_{1 \leq i<j \leq n} |x_i-x_j|^4 \prod_{k=1}^n \frac{dx_k}{2 \pi},\]
where $\mathbb{T}^n$ is as in the previous example. We let the inner
product $\langle f,g \rangle$ of two functions be defined by \[ \langle
f,g \rangle = \frac{2^n}{(2n)!} \int_{\mathbb{T}^n} f(x_1,\cdots,x_n)
\overline{g(x_1,\cdots,x_n)} \prod_{1 \leq i<j \leq n} |x_i-x_j|^4
\prod_{k=1}^n \frac{dx_k}{2 \pi}.\]

The spherical functions for this symmetric space are parameterized by
integer sequences $\lambda_1 \geq \lambda_2 \cdots \geq \lambda_n$ and are
$\omega_{\lambda}:= \frac{P_{\lambda}(x_1,\cdots,x_n;\frac{1}{2})}
{P_{\lambda}(1,\cdots,1;\frac{1}{2})}$, the normalized Jack polynomials
with parameter $1/2$. As mentioned in the previous example, Jack
polynomials are usually defined assuming that $\lambda_n \geq 0$, so if
$\lambda_n=-k<0$, $P_{\lambda}$ should be interpreted as $(x_1 \cdots
x_n)^{-k} P_{\lambda+(k)^n}$, where $\lambda+(k)^n$ is given by adding $k$
to each of $\lambda_1,\cdots,\lambda_n$.

Letting $\lambda$ be a partition of $n$ and using the notation of the
previous example, two useful formulas are the ``principal specialization
formula'' (page 381 of \cite{Mac})
\[ P_{\lambda}(1,\cdots,1;\frac{1}{2}) = \prod_{s \in \lambda} \left[ \frac{n
+ \frac{a'(s)}{2}-l'(s)}{\frac{a(s)}{2}+l(s)+1} \right]. \] and the
formula
\[ dim(H_{\lambda}) = \prod_{s \in \lambda}
 \frac{\left( n+\frac{a'(s)}{2}-l'(s) \right) \left( 2n-1+a'(s)-2l'(s) \right)}
{\left( \frac{a(s)}{2}+l(s)+1 \right) \left( a(s)+2l(s)+1 \right)},\]
which follows from the formula for $\langle P_{\lambda},P_{\lambda}
\rangle$ on page 383 of \cite{Mac} and the fact (Lemma \ref{orthogss})
that
\[ dim(\lambda) = \frac{P_{\lambda}(1,\cdots,1;\frac{1}{2})^2}{\langle
P_{\lambda},P_{\lambda} \rangle}.\]

\begin{theorem} Let $W= \sqrt{1-\frac{1}{2n}}
\left(Tr(g)+\overline{Tr(g)} \right)$, where $g$ is random from Dyson's
circular symplectic ensemble and $n \geq 2$. Then for all real $x_0$, \[
\left| \pp(W \leq x_0) - \frac{1}{\sqrt{2 \pi}} \int_{-\infty}^{x_0}
e^{-\frac{x^2}{2}} dx \right| \leq \frac{4}{n}.\] \end{theorem}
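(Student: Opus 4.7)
The plan is to apply Theorem \ref{cssCLT} to the spherical function $\tau=\omega_{(1)}$, paralleling the proof for $U(n,\cc)/O(n,\rr)$ in Subsection \ref{circorth}. Since $P_{(1)}(x_1,\dots,x_n;1/2)=x_1+\cdots+x_n$ and the principal specialization formula gives $P_{(1)}(1,\dots,1;1/2)=n$, one has $\omega_{(1)}(g)=Tr(g)/n$. Applying the dimension formula at $\lambda=(1)$ yields $dim((1))=n(2n-1)$, and consequently $\sqrt{dim(\tau)/2}\,(\omega_\tau+\overline{\omega_\tau})=\sqrt{1-1/(2n)}\,(Tr(g)+\overline{Tr(g)})$, matching the $W$ of the statement.

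Next I would decompose $(\omega_{(1)}+\overline{\omega_{(1)}})^2$ into spherical functions. Since $\overline{P_{(1)}}=P_{(1^{n-1})}/(x_1\cdots x_n)$, the Pieri rule for Jack polynomials with parameter $1/2$ (page 340 of \cite{Mac}) gives two-term expansions of $P_{(1)}^2$ and of $P_{(1)}\cdot P_{(1^{n-1})}$ whose coefficients are explicit rational functions of $n$ different from their $\alpha=2$ counterparts in Subsection \ref{circorth}. Combining these yields an expansion of the structural form
\[
(P_{(1)}+\overline{P_{(1)}})^2=c_0(n)+P_{(2)}+c_1 P_{(1^2)}+c_2 P_{(1,0^{n-2},-1)}+\overline{P_{(2)}}+c_1\overline{P_{(1^2)}}.
\]
Dividing by $n^2$ and normalizing each $P_\phi$ against its principal specialization then produces the multiplicities $m_\phi[(\tau+\bar\tau)^2]$ required in Theorem \ref{cssCLT}, supported on the finite set $\phi\in\{\mathbf{1},(2),(1^2),\overline{(2)},\overline{(1^2)},(1,0^{n-2},-1)\}$.

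I would then take $\alpha$ of type $(1,\dots,1,e^{i\theta},e^{-i\theta})$. This satisfies $K\alpha K=K\alpha^{-1}K$ and gives $a=1-\omega_{(1)}(\alpha)=2(1-\cos\theta)/n$. Evaluating each $\omega_\phi(\alpha)$ by the corresponding Jack polynomial at $(1,\dots,1,e^{i\theta},e^{-i\theta})$ and substituting into Theorem \ref{cssCLT}, I expect that in the $\theta\to 0$ limit the first error term simplifies to an explicit rational function of $n$ which can be bounded uniformly by $4/n$, while the second error term is of order $[(1-\cos\theta)/n]^{1/4}$ and therefore vanishes.

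The main obstacle will be the Jack-polynomial bookkeeping at the unusual parameter $\alpha=1/2$: obtaining the correct Pieri coefficients $c_0,c_1,c_2$ and the principal specializations of $P_{(2)},P_{(1^2)},P_{(2,1^{n-2})}$ in this parameter, then verifying that the first error term (a sum of squared character-ratio expressions indexed by the participating $\phi$) combines to at most $4/n$ rather than a worse $O(1/n)$ constant. Once the Pieri coefficients and specializations are in hand, the rest is routine simplification analogous to the end of the circular orthogonal proof.
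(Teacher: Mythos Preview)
Your proposal is correct and follows essentially the same route as the paper: apply Theorem~\ref{cssCLT} with $\tau=\omega_{(1)}$, use the Pieri rule for Jack polynomials at parameter $1/2$ to decompose $(P_{(1)}+\overline{P_{(1)}})^2$ (the paper obtains $c_0=\tfrac{2n}{2n-1}$, $c_1=\tfrac{2}{3}$, $c_2=2$), take $\alpha$ of type $(1,\dots,1,e^{i\theta},e^{-i\theta})$, and let $\theta\to 0$. The paper computes the first error term exactly as $\tfrac{1}{2n}\sqrt{8(4n^3-4n^2+5n-3)/(4n^3-8n^2+n+3)}\le 4/n$ and the second as $\bigl[6(2n-1)(4n-5)(1-\cos\theta)/(\pi n^2(2n-3))\bigr]^{1/4}\to 0$, confirming your expectations.
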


\begin{proof} Apply Theorem \ref{cssCLT} to the spherical function
$\tau=\omega_{(1)}(g)=\frac{Tr(g)}{n}$. To compute
$m_{\phi}[(\tau+\overline{\tau})^2]$ for all $\phi$, one has to decompose
$(\omega_{(1)}+\overline{\omega_{(1)}})^2$ into spherical functions, which
is equivalent to decomposing $(P_{(1)}+\overline{P_{(1)}})^2$ in terms of
Jack polynomials. From the Pieri rule for Jack polynomials (\cite{Mac},
page 340), one calculates that $(P_{(1)}+\overline{P_{(1)}})^2$ is equal
to \begin{eqnarray*}
 & & P_{(1)} P_{(1)} + 2 \frac{P_{(1)}
P_{(1^{n-1})}}{x_1 \cdots x_n} + \overline{P_{(1)} P_{(1)}}\\
& = & P_{(2)} + \frac{2}{3} P_{(1^2)} + 2 \left[ \frac{ \frac{n}{2n-1}
P_{(1^n)} + P_{(2,1^{n-2})}}{x_1 \cdots x_n} \right] + \frac{2}{3}
\overline{P_{(1^2)}} + \overline{P_{(2)}} \\
& = & \frac{2n}{2n-1} + P_{(2)} + \frac{2}{3} P_{(1^2)} + 2
P_{(1,0^{n-2},-1)} + \frac{2}{3} \overline{P_{(1^2)}} +
\overline{P_{(2)}}.\end{eqnarray*}

Now take $\alpha$ to be an element of type $(1,\cdots,1,e^{i \theta},
e^{-i \theta})$; then $K \alpha K = K \alpha^{-1}K$ and $a=
\frac{2(1-cos(\theta)}{n}$. One computes that in the $\theta \rightarrow
0$ limit, the first error term of Theorem \ref{cssCLT} is $\frac{1}{2n}
\sqrt{\frac{8(4n^3-4n^2+5n-3)}{4n^3-8n^2+n+3}} \leq \frac{4}{n}$. The
second error term is computed to be $\left[
\frac{6(2n-1)(4n-5)(1-cos(\theta))}{\pi n^2 (2n-3)} \right]^{1/4}$ which
goes to $0$ as $\theta \rightarrow 0$. \end{proof}

\end{document}